\def\@tocline#1#2#3#4#5#6#7{\relax
	\ifnum #1>\c@tocdepth 
	\else
	\par \addpenalty\@secpenalty\addvspace{#2}%
	\begingroup \hyphenpenalty\@M
	\@ifempty{#4}{%
		\@tempdima\csname r@tocindent\number#1\endcsname\relax
	}{%
		\@tempdima#4\relax
	}%
	\parindent\z@ \leftskip#3\relax \advance\leftskip\@tempdima\relax
	\rightskip\@pnumwidth plus4em \parfillskip-\@pnumwidth
	#5\leavevmode\hskip-\@tempdima
	\ifcase #1
	\or\or \hskip 1em \or \hskip 2em \else \hskip 3em \fi%
	#6\nobreak\relax
	\dotfill\hbox to\@pnumwidth{\@tocpagenum{#7}}\par
	\nobreak
	\endgroup
	\fi}
\newcommand{\z}{\mathbb{Z}}
\newcommand{\zp}{\mathbb{Z}_p}
\newcommand{\q}{\mathbb{Q}}
\newcommand{\cc}{\mathbb{C}}
\newcommand{\qp}{\mathbb{Q}_p}
\newcommand{\cp}{\mathbb{C}_p}
\newcommand{\A}{\mathbb{A}}
\newcommand{\ph}[1]{\phantom{#1}}
\newcommand{\mat}[4]{\left(\begin{smallmatrix}
		#1 & #2\\
		#3 & #4
	\end{smallmatrix}\right)}
\newcommand{\Addresses}{
	{
		\bigskip
		\footnotesize
		
		L. Dall'Ava, \textsc{Dipartimento di Matematica, Università degli Studi di Padova,
			Padova, Italy.}\par\nopagebreak
		\textit{E-mail address:} \href{mailto:luca.dallava@math.unipd.it}{\texttt{luca.dallava@math.unipd.it}}\par\nopagebreak
		\textit{URL:} \url{https://www.math.unipd.it/~dallava/}
		
		
	}
}
\newtheorem{theoremaleph}{Theorem}
\newtheorem{theorem}{Theorem}[subsection]
\newtheorem{definition}[theorem]{Definition}
\newtheorem{lemma}[theorem]{Lemma} 
\newtheorem{corollary}[theorem]{Corollary}
\newtheorem{proposition}[theorem]{Proposition}
\newtheorem{assumption}[theorem]{Assumption}
\newtheorem{remark}[theorem]{Remark}
\newtheorem{notation}[theorem]{Notation}
\newtheorem*{notation*}{Notation}
\numberwithin{equation}{section}
\author{Luca Dall'Ava}
\title{Hida theory for special orders}
\pgfplotsset{compat=1.16}
\begin{document}
	
	\begin{abstract}
		This note is devoted to the study of families of quaternionic modular forms arising from orders defined by Pizer. In this situation, the Hecke-eigenspaces are 2-dimensional contrary to the classical case of Eichler orders. The main result is a Control Theorem in the spirit of Hida, interpolating these 2-dimensional Hecke-eigenspaces. We restrict our attention to a definite rational quaternion algebra ramified at a single odd prime $\ell$.\newline
		
		\noindent{\scshape{2010 Mathematics Subject Classification:}} 11F11, 11R52. \\
		{\scshape{Key words:}} Quaternion algebras, Hijikata--Pizer--Shemanske orders, Hida families, control theorems.
	\end{abstract}
	
	\maketitle
	
	{   \hypersetup{hidelinks}
		\tableofcontents}

	\section{Introduction and statement of the result}
	
	Let $\ell$ be an odd prime, $N\geq 1$ an integer prime to $\ell$ and $k\geq 2$ an integer. Let $B$ be the unique, up to isomorphism, quaternion algebra over $\q$ ramified exactly at $\ell$ and $\infty$. Take $R$ to be an Eichler order of level $N$ in $B$ and consider the space of $\mathbb{C}$-valued cuspidal quaternionic newforms with level $R$, denoted by $\mathscr{S}_k^{new}(R,\mathbb{C})$; we recall its precise definition in Section \ref{Quaternionic Eisenstein series and newforms} but, roughly speaking, it represents the space of non-Eisenstein quaternionic modular forms which do not satisfy a lower level structure. The Jacquet--Langlands correspondence ensures an injective transfer between automorphic representations of the algebraic group associated with $B^\times$ and $GL_2/\q$, but at the level of automorphic forms it takes an explicit realization, often referred to as the Eichler--Jacquet--Langlands correspondence. More precisely, one obtains the Hecke-equivariant isomorphism $\mathscr{S}_k^{new}(R,\mathbb{C})\cong S_k^{new}(\Gamma_0(N\ell),\mathbb{C})$. On the other hand, in order to study modular forms with higher level structure at $\ell$, one needs more general orders, namely the \emph{special orders} defined by Pizer and Hijikta--Pizer--Shemanske. As the precise definition of special orders does not contribute necessarily to the understanding of this introduction, we prefer to avoid technicalities and postpone it to Definition \ref{def special orders}. In the case where $R$ is such a special order in $B$, corresponding to level structure $N\ell^{2r}$ with $r\geq 1$, the relation between automorphic forms changes considerably. Before stating the precise relation, in order to be consistent with the notation present in \cite{HPS1989basis}, let us introduce the following (unfortunately unconventional but rather useful) piece of notation. Throughout the whole document, for any module $M$, we write $2M$ for the direct sum $M\oplus M$. The Jacquet--Langlands correspondence, for $r\geq 2$, takes the explicit form of the following Hecke-isomorphism
	\begin{equation}\label{iso hps intro}
		2 S_k^{new}(\Gamma_1(N\ell^{2r}),\mathbb{C}) \cong \mathscr{S}^{new}_k(R,\mathbb{C})\oplus \bigoplus_{\chi}2 S_k^{new}(\Gamma_1(N\ell^{r}),\chi^2,\mathbb{C})^{\otimes\overline{\chi}},
	\end{equation}
	where two copies of $S_k^{new}(\Gamma_1(N\ell^{2r}),\mathbb{C})$ must be taken into account and the sum of the twisted spaces $S_k^{new}(\Gamma_1(N\ell^{r}),\chi^2,\mathbb{C})^{\otimes\overline{\chi}}$ (see beginning of Section \ref{quaternionic lifts of modular forms}) runs over certain primitive characters modulo $\ell^r$. Equation (\ref{iso hps intro}) has a slightly more complicated expression for $r=1$, but the above situation is already explanatory of the general phenomenon; an exhaustive statement can be found in Theorem \ref{Thm 7.16-7}. For any choice of an isomorphism in Eq. (\ref{iso hps intro}) we see that any classical newform of level $N\ell^{2r}$, which is twist-minimal at $\ell$ (as in Definition \ref{twist min}), lifts to two linearly independent quaternionic newforms with the same Hecke-eigenvalues away from the level. The above-mentioned situation has been extensively studied in a series of works by H. Hijikata, A. Pizer, and T. Shemanske, most notably \cite{Pizer80p2} and \cite{HPS1989basis}. In the light of this multiplicity, it is natural to ask whether these quaternionic modular forms still live in $p$-adic families, for $p$ an odd prime different from $\ell$ and prime to $N$. In this note we provide a positive answer to this question. 
	
	As introduced above, let $\ell$, $p$ and $N$ be, in the order, two distinct odd primes and a positive integer prime to both $\ell$ and $p$. For the sake of simplicity, we restrict here to the case of trivial character at $\ell$ and exponent $2r\geq 4$; we refer the reader to Theorem \ref{Control theorem for special orders} for the general statement. As in Definition 2.4 of \cite{GreenbergStevens1993}, we consider $\mathcal{R}$ to be the \emph{universal ordinary $p$-adic Hecke algebra} of tame level $N\ell^{2r}$. Considering the Iwasawa algebra $\zp[\![ \zp^\times ]\!]$, $\mathcal{R}$ represents the $\zp[\![ \zp^\times ]\!]$-algebra of Hecke-operators acting on Hida families of tame level $N\ell^{2r}$. For any continuous group homomorphism $\kappa:\mathcal{R}\longrightarrow \overline{\qp}$, we say that $\kappa$ is an arithmetic homomorphism if its restriction to $1+p\zp \subseteq \zp[\![ \zp^\times ]\!]$ defines a character of the form $z\longmapsto z^{k-2}\varepsilon(z)$, for $k\in \z_{\geq 2}$ and $\varepsilon$ a $p$-adic character of conductor $p^n$, $n\geq 0$. We recall that these homomorphisms are identified with points on the so-called weight space. As usual, we associate to each arithmetic homomorphism $\kappa$ the couple $(k,\varepsilon)$. For any $\kappa$ we also denote its kernel by $\mathcal{P}_\kappa$ and the corresponding localization of $\mathcal{R}$ by $\mathcal{R}_{\mathcal{P}_\kappa}$. Let $f$ be a classical modular newform in $S_k(\Gamma_0(Np\ell^{2r}),\mathbb{C})$ and assume that $f$ is twist-minimal at $\ell$. Moreover, if $f$ is $p$-ordinary, we can consider the unique Hida family $f_\infty$ associated with $f$ by the works of Hida and Wiles. For each arithmetic homomorphism $\kappa$ we denote by $f_\kappa$ the specialization of $f_\infty$ at $\kappa$. We set $F$ (resp. $F_\kappa$) to be the field extension of $\qp$ generated by the Fourier coefficients of $f$ (resp. $f_\kappa$) and take $\mathcal{O}$ (resp. $\mathcal{O}_\kappa$) to be its ring of integers. Fix now $R$ to be a maximal order in the quaternion algebra $B$ which contains the family of nested orders $\{R^n\}$,
	\begin{equation}
		\cdots \subset R^{n+1}  \subset R^n\subset \cdots R^0\subset R, \,\,\,\,\,\textrm{ $R^n$ is a special order of level $Np^n\ell^{2r}$.}
	\end{equation}
	At all places $q\neq\ell,\infty$, we fix isomorphisms $\iota_q: B\otimes_{\q}\q_q\cong M_2(\q_q)$ such that $R^{n}\otimes_{\z} \z_q$ is identified with the upper triangular matrices modulo $Np^{n}$. For each $n$ we consider the compact open subgroup $U_n\subset \widehat{R^n}:=R^n\otimes_{\z} \widehat{\z}$ defined as
	\begin{equation}
		U_n=\left\{g=(g_q)\in \widehat{R}^{n\times}\mid\iota_q(g_q) \equiv \left(\begin{smallmatrix}
			* & * \\
			0 & 1
		\end{smallmatrix}\right)\pmod{Np^n\z_q},\textrm{ for }q\mid Np^n\right\}.
	\end{equation}
	Let $V_{k-2}(\mathcal{O}_\kappa)$ be the dual of $L_{k-2}(\mathcal{O}_\kappa)$, the space of homogeneous polynomials in $\mathcal{O}_\kappa[X,Y]$ of degree $k-2$, endowed with the action $|_{u_p}$ of $GL_2(\zp)$, induced by the left multiplication $\mat{a}{b}{c}{d}(X,Y)^t=(aX+bY,cX+dY)^t$. Denoting $\widehat{B}=B\otimes \A_{\q,f}$ for $\A_{\q,f}$ the finite ad\`{e}les of $\q$, we consider, as in Definition \ref{quaternionic mf k def}, the space of quaternionic $p$-adic modular forms of weight $k\geq 2$, character $\varepsilon$ (of conductor $p^n$) and level $U_n$,
	\begin{multline}
		S_k(U_n,\varepsilon,\mathcal{O}_\kappa):=\Big\{\varphi:\widehat{B}^\times\rightarrow V_{k-2}(\mathcal{O}_\kappa)\mid \varphi(b\tilde{b}uz)=\varepsilon(z)z_p^{k-2}\varphi(\tilde{b})|_{u_p},\\ 
		\text{ for } b\in B^\times,\tilde{b}\in \widehat{B}^\times, u\in U_n,z\in \A_{\q,f}^\times \Big\}.
	\end{multline}
	More generally, let $\mathsf{X}$ be the subset of \emph{primitive vectors} in $\zp^2$, namely the subset of vectors with at least one component which is not divisible by $p$, and consider the space $\mathcal{M}(\mathsf{X},\mathcal{O})$ of $\mathcal{O}$-valued measures on $\mathsf{X}$. We construct, following Definition \ref{measure valued quat forms}, the space of measure-valued quaternionic modular forms
	\begin{multline}
		S_2(U_0, \mathcal{M}(\mathsf{X},\mathcal{O})):=
		\Big\{\varphi:\widehat{B}^\times\rightarrow \mathcal{M}(\mathsf{X},\mathcal{O})\mid \varphi(b\tilde{b}uz)=\varphi(\tilde{b})|_{u_p},\\
		\text{ for } b\in B^\times,\ph{.}\tilde{b}\in \widehat{B}^\times,\ph{.} u\in U_0,\ph{.}z\in \A_{\q,f}^\times \Big\},
	\end{multline}
	for $|_{u_p}$ the action of $GL_2(\zp)$ induced by the left multiplication on the variables. By integration, we induce, for any arithmetic homomorphism $\kappa=(k,\varepsilon)$, a specialization map
	\begin{equation}
		\nu_\kappa: S_2(U_0, \mathcal{M}(\mathsf{X},\mathcal{O}))\longrightarrow S_k(U_n,\varepsilon, \mathcal{O})
	\end{equation}
	such that
	\begin{equation}
		\nu_{\kappa}(\varphi)(\tilde{b})(P):=\int_{p\zp\times \zp^\times}\varepsilon_{\A}^{-1}(y)P(x,y)d(\varphi(\tilde{b}))(x,y)
	\end{equation}
	for $\phi$ and $\tilde{b}$ as above, and any $P\in L_{k-2}(\mathcal{O}_\kappa)$; all details can be found in Section \ref{specialization maps}. Considering the ordinary component of $S_2(U_0,\varepsilon, \mathcal{M}(\mathsf{X},\mathcal{O}))$, which we denote by $\mathbb{W}$, the specialization maps descend to maps between the ordinary components
	\begin{equation}
		\nu_{\kappa}^{ord}: \mathbb{W}\longrightarrow S_k(U_n,\varepsilon, \mathcal{O})^{ord},
	\end{equation}
	for $S_k(U_n,\varepsilon, \mathcal{O})^{ord}$ the subspace of $p$-ordinary quaternionic forms in $S_k(U_n,\varepsilon, \mathcal{O})$. As the algebra $\mathcal{R}$ acts on the space of Hida families, we can consider the $f_\infty$-isotypic component
	\begin{equation}
		\left(\mathbb{W}\otimes_{\mathcal{O}[\![ 1+p\zp ]\!]} \mathcal{R}\right)[f_\infty],
	\end{equation}
	that is, the component of $\mathbb{W}\otimes_{\mathcal{O}[\![ 1+p\zp ]\!]} \mathcal{R}$ where the Hecke-operators act with the same $\mathcal{R}$-eigenvalues of $f_\infty$. Up to a mild condition on the level $N\ell^{2r}$, as explained in Remark \ref{freeness}, one can assume the $\mathcal{R}$-module $\mathbb{W}\otimes_{\mathcal{O}[\![ 1+p\zp ]\!]} \mathcal{R}$ to be free, as we do here. We can hence state our main result under the above simplifying restrictions for the character and the power of $\ell$; the general statements are the content of Theorems \ref{Control theorem for special orders} and \ref{Control theorem for special orders 2}.
	\begin{theoremaleph}[Control theorem for special orders]\label{Control theorem for special orders Intro}
		With the above notation, suppose that $f$ is twist-minimal at $\ell$. For any arithmetic homomorphism $\kappa:\mathcal{R}\longrightarrow \overline{\qp}$, the map $\nu_\kappa$ (of Proposition \ref{Prop inj}, induced by the specialization map) induces an isomorphism of $2$-dimensional $F_\kappa$-vector spaces
		\begin{equation}
			\left(\left(\mathbb{W}\otimes_{\mathcal{O}[\![ 1+p\zp ]\!]} \mathcal{R}\right)[f_\infty]\right)\otimes_{\mathcal{R}}\mathcal{R}_{\mathcal{P}_{\kappa}}/\mathcal{P}_\kappa\mathcal{R}_{\mathcal{P}_{\kappa}} \overset{\cong}{\longrightarrow} \left(S_k(U_n,\varepsilon, F_\kappa)^{ord}\right)[f_\kappa].
		\end{equation}
	\end{theoremaleph}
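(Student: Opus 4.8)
The plan is to realise the displayed arrow as a Hecke-equivariant injection between two $F_\kappa$-vector spaces that are independently shown to be $2$-dimensional: the injectivity is supplied by Proposition \ref{Prop inj}, the automorphic dimension by the Hijikata--Pizer--Shemanske decomposition of Theorem \ref{Thm 7.16-7}, and the structure of the source by the freeness of Remark \ref{freeness}.

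First I would check that $\nu_\kappa$ intertwines the $\mathcal{R}$-action on $\mathbb{W}$ with the Hecke action on $S_k(U_n,\varepsilon,\mathcal{O})^{ord}$ at all primes $q\nmid Np\ell$, so that it carries the $f_\infty$-isotypic subspace into the $f_\kappa$-isotypic subspace; this is immediate from the defining integration formula, since the Hecke operators away from the level commute with integration against $P$ and $\varepsilon_{\A}^{-1}$. The same formula shows that $\mathcal{P}_\kappa$ annihilates the image, so on the $f_\infty$-component $\nu_\kappa$ factors through $\left(\mathbb{W}\otimes_{\mathcal{O}[\![ 1+p\zp ]\!]}\mathcal{R}\right)[f_\infty]\otimes_{\mathcal{R}}\mathcal{R}_{\mathcal{P}_\kappa}/\mathcal{P}_\kappa\mathcal{R}_{\mathcal{P}_\kappa}$, producing the map in the statement.

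Next I would fix the two dimensions. On the automorphic side, the twist-minimality of $f_\kappa$ at $\ell$ together with Theorem \ref{Thm 7.16-7} shows that $f_\kappa$ lifts to exactly two linearly independent quaternionic newforms sharing the same Hecke eigenvalues away from $\ell$, so $\left(S_k(U_n,\varepsilon,F_\kappa)^{ord}\right)[f_\kappa]$ is $2$-dimensional over $F_\kappa$. On the Iwasawa side, Remark \ref{freeness} lets me assume $\mathbb{W}\otimes_{\mathcal{O}[\![ 1+p\zp ]\!]}\mathcal{R}$ is free over $\mathcal{R}$; the $f_\infty$-isotypic component is cut out by a Hecke idempotent, hence is a direct summand and is free over the local ring $\mathcal{R}_{\mathcal{P}_\kappa}$, so reduction modulo $\mathcal{P}_\kappa$ turns the source into an $F_\kappa$-vector space whose dimension equals this rank.

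Finally, injectivity of $\nu_\kappa$ from Proposition \ref{Prop inj} is preserved under localising at $\mathcal{P}_\kappa$ and reducing, the $f_\infty$-component being a direct summand on which these operations are exact; hence the displayed map is an injection into a $2$-dimensional target and the source rank is at most $2$. Since this rank is locally constant by freeness and $f_\infty$ cuts out a single branch of $\mathcal{R}$, it is constant in $\kappa$, so surjectivity at every arithmetic point follows once the source is shown to be $2$-dimensional at one such point. I expect this last step---exhibiting two linearly independent measure-valued lifts inside $\left(\mathbb{W}\otimes_{\mathcal{O}[\![ 1+p\zp ]\!]}\mathcal{R}\right)[f_\infty]$, rather than the single lift one finds in the Eichler-order setting---to be the main obstacle, as it is precisely the simultaneous interpolation of \emph{both} members of the $2$-dimensional Hecke eigenspace and is the genuinely new feature created by the special orders. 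Granting it, an injection between two $2$-dimensional $F_\kappa$-vector spaces is an isomorphism, which is the assertion.
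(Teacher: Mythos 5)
Your overall architecture matches the paper's: Proposition \ref{Prop inj} supplies an injection of the reduced $f_\infty$-isotypic source into the $f_\kappa$-eigenspace, Theorem \ref{Thm 7.16-7} (via Proposition \ref{pointwise dim}) makes the target $2$-dimensional, and the whole proof reduces to showing the source is also $2$-dimensional. But that last reduction is exactly where you stop: you write that exhibiting the rank-$2$ structure of the source is ``the main obstacle'' and then proceed by ``granting it.'' That is the genuine gap --- the entire content of the theorem beyond formal bookkeeping lives in that step, and neither the injectivity of $\nu_\kappa$ nor the freeness of $\mathbb{W}\otimes_{\mathcal{O}[\![ 1+p\zp ]\!]}\mathcal{R}$ bounds the rank of the isotypic component from \emph{below}. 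Your suggestion that one could get away with checking the rank at a single arithmetic point and propagating by local constancy does not remove the difficulty, since you still have to produce the two independent interpolating elements at that one point, and you offer no mechanism for doing so.

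The paper closes this gap in Lemma \ref{Main lemma}, and the mechanism is worth internalizing because it is not a new interpolation construction at all: one never directly exhibits two measure-valued lifts. Instead, Hida's control theorem for the measure-valued space (Theorem 9.4 of \cite{Hida1988b}, applied to the Pontryagin dual $\mathbb{V}=\varinjlim S_2^{ord}(U_n,\widetilde{\psi},F/\mathcal{O})$ and valid for special orders split at $p$ by Remark \ref{remark on hida}) identifies $\mathbb{W}[f_\kappa]$ with the finite-level quaternionic eigenspace $S_k^{new}(U_n,\widetilde{\psi\varepsilon},\mathcal{O})[f_\kappa]$. Then Proposition \ref{square iso hecke-mf} --- the Hijikata--Pizer--Shemanske structure result $\mathscr{S}^{new}_k(U_n,\psi,\mathbb{C})\cong 2\,T_k(n,r,\psi)\cong (\mathsf{h}_n^{T})^2$ --- converts this quaternionic module into \emph{two copies} of a classical Hecke module, so that $\widetilde{\mathbb{W}}_\kappa\cong 2\left(\mathbb{W}^{\ell^{2r}}\otimes_{\Lambda}\mathcal{R}_{\mathcal{P}_\kappa}\right)[f_\kappa]$, and the classical rank-one freeness of Longo--Vigni (their Proposition 2.17) finishes the computation. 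In other words, the two-dimensionality of the source is inherited wholesale from the classical $GL_2$ Hida theory through the explicit Jacquet--Langlands isomorphism, rather than constructed on the quaternionic side; without invoking this chain (or an equivalent), your argument does not establish surjectivity.
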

	The two main ingredients needed for the proof of Theorem \ref{Control theorem for special orders Intro} are the above isomorphism (\ref{iso hps intro}) and the seminal paper \cite{Hida1988b}. The results proved in \cite{Hida1988b} for definite quaternion algebras over totally real fields different from $\q$ remain true in the case of definite quaternion algebras over $\q$, as already noticed in Section 3 of \cite{LongoVigni2012} and Section 4 of \cite{Hsieh2021}, and in the case of special orders, as remarked in Remarks \ref{freeness} and \ref{remark on hida}. The strategy of the proof is then a generalization of the work \cite{LongoVigni2012}, from which we take inspiration.
	
	Theorem \ref{Control theorem for special orders Intro} extends the foundational results of Hida theory to the case of quaternionic modular forms with special level structure, allowing to consider quaternionic $p$-adic families with tame level $\ell^{2r}$ over the quaternion algebra $B$, which, we remark, is ramified at $\ell$. We highlight again that the situation discussed here differs markedly from the classical case of Eichler orders, where all the local non-archimedean automorphic representations at $\ell$ are 1-dimensional.  In particular, the novelty of this result lies in the rank of the Hecke-eigenspaces being  and no more 1 as in the classical Eichler case.\\ 
	
	An additional motivation for such a Control Theorem originates from the desire to study points outside the interpolation region of the triple product $p$-adic $L$-function, as treated in \cite{DallAva2021Approx-p}. More precisely, the present study together with \cite{DallAva2021Approx-p} are motivated by a conjecture of Bertolini--Seveso--Venerucci and by the wish to provide computational support to it.
	
	The present work leaves open some questions which we briefly discuss in Section \ref{A small remark on related works and open questions}. The main one is whether it is possible to distinguish the 2-dimensional spaces of quaternionic modular forms attached to special orders, first in the case of a classical eigenspace, and then for families.\\
	
	\textbf{Acknowledgments:} This note presents the content of a section of the author's doctoral dissertation \cite{DallAva2021PhD}; he expresses his gratitude to his supervisor Massimo Bertolini. Many thanks go also to Matteo Longo for several helpful discussions, among the others on \cite{LongoVigni2012} and \cite{Hida1988b}. The author is grateful to the anonymous referee for the valuable comments, suggestions, and questions which led to a significant improvement of the exposition. The author also wishes to thank Matteo Tamiozzo, for numerous mathematical conversations, and Jonas Franzel, for reading an early draft of this work and finding out various typos. The author is grateful to the Universit\"at Duisburg--Essen, where the main part of this work has been carried out, as well as to the Università degli Studi di Padova (Research Grant funded by PRIN 2017 ``Geometric, algebraic and analytic methods in arithmetic'') for their financial support.
	
	\section{Quaternionic orders and modular forms}\label{quaternionic orders and modular forms}
	
	We begin by recalling the definitions of the various quaternionic orders as well as the definition of quaternionic modular forms, both $p$-adic and classical, for a definite quaternion algebra over $\q$. Special orders are a generalization of the classical Eichler orders, which are needed for studying both higher ramification and the presence of a character at primes where the quaternion algebra is ramified. We refer the reader to \cite{HPS1989basis} and \cite{HPS1989orders} for all the details that we are not recalling here.
	
	We fix, once and for all, a choice of field embeddings $\overline{\q}\hookrightarrow\overline{\qp}\hookrightarrow \cc$. For any prime $q$ we denote $q$-adic valuation by $\nu_q$, normalized so that $\nu_q(q)=1$.
	
	\subsection{Special orders}\label{special order}
	
	Let $B$ the unique, up to isomorphism, quaternion algebra over $\q$ with discriminant $D$. Fix an isomorphism $\iota_q:B_q:=B\otimes \q_q\cong M_2(\q_q)$ for each $q\nmid D$. We denote the reduced norm of $b\in B$ by $n(b)\in\q$. Let $q$ be an odd rational prime, and fix $u\in\z$ to be a quadratic non residue modulo $q$. The local field $\q_q$ has a unique quadratic unramified extension $\q_q(\sqrt{u})$ and two quadratic ramified ones, $\q_q(\sqrt{q})$ and $\q_q(\sqrt{uq})$. For $L_q$ one of these quadratic extensions, we denote by $\mathcal{O}_{L_q}$ its ring of integers. Set
	\begin{equation}
		M_2^0(N\z_q):=\left\{\gamma\in M_2(\z_q) \mid  \gamma\equiv \mat{*}{*}{0}{*} \pmod{N\z_q}\right\}=M_2^0(q^{\nu_q(N)}\z_q),
	\end{equation}
	and, for $r\geq 1$,  
	\begin{equation}
		M(L_q,r):=\mathcal{O}_{L_q}+\left\{x\in B_q\mid n(x)\in q\z_q \right\}^{r-1}
		=\mathcal{O}_{L_q}+\left\{x\in 	B_q\mid n(x)\in q^{r-1}\z_q \right\}.
	\end{equation}
	We notice that, for $r=1$, $	M(L_q,1)$ is the unique maximal ideal of $B_q$.
	\begin{definition}[\cite{HPS1989orders}, Def. 6.1]\label{def special orders}
		An order $R$ in $B$ is said to be a special order of level $M=N\cdot\prod_{\ell\mid D}\ell^{m_\ell}$ if
		\begin{enumerate}[label=(\roman*),font=\itshape]
			\item $R_q:=R\otimes \z_q$ is conjugate to $M_2^0(N\z_q)$ by an element of $B_q^\times$ (via $\iota_q$), for each $q\nmid D\infty$;
			\item there exists a quadratic extension $L_\ell$ of $\q_\ell$ such that $R_\ell$ is conjugate to $M(L_\ell,m_\ell)$, for each $\ell|D$.
		\end{enumerate}
	\end{definition} 
	\noindent In the following, we choose for each $q\nmid D$ the isomorphism $\iota_q$ such that $\iota_l(R_q)=M_2^0(N\z_q)$. If in the above definition we take the level $M$ to be such that $D||M$, we obtain the usual definition of an Eichler order of level $N$ (see \cite{Pizer1980}, page 344).\newline
	
	From now on, we assume $D$ to be odd and we fix a particular choice of special orders and quadratic field extensions. We follow the thorough summary given in Section $2.2$ of \cite{LRdvP2018}, which is based on a careful analysis of \cite{HPS1989basis} and takes into account more general orders. Let $f$ be any newform in $ S_2(\Gamma_1(N \prod_{\ell\mid D}\ell^{e_\ell}), \mathbb{C})$. In order to be able to lift $f$ to a quaternionic modular form, we fix the choice of the special order $R$ such that the quadratic extensions $L_{\ell}$ of $\q_{\ell}$ and the exponents $m_{\ell}$ are as follows. For any (odd) prime $\ell\mid D$, if
	\begin{enumerate}
		\item $e_\ell$ is odd, we take $L_\ell$ to be the unramified extension of $\q_{\ell}$ and $m_{\ell}=e_\ell$;
		\item $e_\ell$ is even, we take $L_\ell$ to be one of the two ramified extension of $\q_{\ell}$ and $m_{\ell}=e_\ell$.
	\end{enumerate}
	The case of even discriminant presents some further difficulties and, as our main case of interest is the case of $\ell$ odd, we omit it and refer the reader to \cite{HPS1989basis} and \cite{LRdvP2018}.\newline
	
	We recall a part of the notation already used in the Introduction.
	\begin{notation}
		Let $\widehat{\z}$ be the profinite completion of $\z$. We set $\widehat{B}=B\otimes_\q \A_{\q,f}$ and $\widehat{R}=R\otimes_\z \widehat{\z}$, where $\A_{\q,f}=\q \widehat{\z}$ are the finite ad\`{e}les of $\q$.
	\end{notation}
	Special orders satisfy properties similar to the Eichler ones.
	\begin{lemma}\label{open compact order}
		$\widehat{R}^\times$ is a compact open subgroup of $\widehat{B}^\times$. In fact, this is true for each one of its local components.
	\end{lemma}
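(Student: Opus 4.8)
The plan is to reduce the global assertion to its local components and then treat each local unit group directly. Since $\widehat{R}=R\otimes_\z\widehat{\z}$ decomposes as the product $\prod_q R_q$ over all rational primes $q$, its unit group is the restricted direct product $\widehat{R}^\times=\prod_q R_q^\times$ sitting inside $\widehat{B}^\times={\prod_q}'\,B_q^\times$ (restricted with respect to the subgroups $R_q^\times$). By the very definition of the restricted-product topology, a set of the form $\prod_q U_q$, with each $U_q$ open in $B_q^\times$ and $U_q=R_q^\times$ for almost all $q$, is open; likewise a product of the compact groups $R_q^\times$ is compact by Tychonoff, its product topology agreeing with the subspace topology inherited from $\widehat{B}^\times$. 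Thus the statement follows once each local factor $R_q^\times$ is shown to be a compact open subgroup of $B_q^\times$, which is exactly the content of the final sentence of the lemma.

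For the local statement I would first record that in every case---whether $q\nmid D$, where by Definition \ref{def special orders} $R_q$ is conjugate to $M_2^0(N\z_q)\subseteq M_2(\q_q)$, or $\ell\mid D$, where $R_\ell$ is conjugate to $M(L_\ell,m_\ell)$ inside the division algebra $B_\ell$---the order $R_q$ is a full $\z_q$-lattice in the four-dimensional $\q_q$-vector space $B_q$. Hence, via a $\z_q$-basis of $R_q$ (which is simultaneously a $\q_q$-basis of $B_q$), the order $R_q$ is homeomorphic to $\z_q^4$ and is therefore both open and compact in $B_q$. The key identity is then
\begin{equation}
	R_q^\times=\{x\in R_q : n(x)\in\z_q^\times\}.
\end{equation}
The inclusion ``$\subseteq$'' holds because $x\in R_q^\times$ forces $n(x)n(x^{-1})=1$ with $n(x),n(x^{-1})\in\z_q$. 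For ``$\supseteq$'' I would use that every $x\in R_q$ satisfies its reduced characteristic polynomial $X^2-t(x)X+n(x)$ with $t(x),n(x)\in\z_q$ (integrality over $\z_q$), so that the main involute $\overline{x}=t(x)-x$ again lies in $R_q$; when $n(x)\in\z_q^\times$ this yields $x^{-1}=n(x)^{-1}\overline{x}\in R_q$.

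With this identity in hand, openness and compactness of $R_q^\times$ become formal consequences of the continuity of the reduced norm $n\colon B_q\to\q_q$, which is a polynomial map. Since $\z_q^\times$ is simultaneously open and closed in $\q_q$, the preimage $n^{-1}(\z_q^\times)$ is open and closed in $B_q$; intersecting with $R_q$ shows that $R_q^\times$ is open in $R_q$ (hence in $B_q$) and closed in the compact set $R_q$ (hence compact). The main, and rather minor, obstacle I anticipate is the verification of the norm identity uniformly across cases---namely confirming that integrality of the reduced trace and norm, and the resulting stability of $R_q$ under the main involution, holds for the Hijikata--Pizer--Shemanske orders $M(L_\ell,m_\ell)$ exactly as for the split orders $M_2^0(N\z_q)$. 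Once that is in place, the topological conclusions, both local and global, follow immediately.
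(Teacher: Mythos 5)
Your argument is correct, but it takes a genuinely different route from the paper's. The paper outsources most of the work to Miyake: Lemma 5.1.1 of \cite{Miyake2006} already gives compactness of $R_q^\times$ for an arbitrary order, and Sections 5.1--5.2 of \emph{loc.\ cit.} cover the split/Eichler places, so the only thing actually argued in the paper is openness at the ramified prime $\ell$, extracted from the explicit shape $M(L_\ell,r)=\mathcal{O}_{L_\ell}+\{x\in B_\ell\mid n(x)\in\ell^{r-1}\z_\ell\}$: the second summand is the preimage of an open set under the continuous reduced norm, and a sumset with an open summand is a union of translates of an open set, hence open. You instead give a uniform, self-contained local argument valid for any order in $B_q$: an order is a full $\z_q$-lattice, hence homeomorphic to $\z_q^4$ and open and compact in $B_q$, and the identity $R_q^\times=\{x\in R_q\mid n(x)\in\z_q^\times\}$ --- which you justify correctly via integrality of the reduced characteristic polynomial and stability of $R_q$ under the main involution $x\mapsto t(x)-x$ --- exhibits the unit group as the intersection of the compact open set $R_q$ with the open-and-closed set $n^{-1}(\z_q^\times)$. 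The ``minor obstacle'' you flag is in fact no obstacle at all: $M(L_\ell,m_\ell)$ is an order, i.e.\ a ring that is finitely generated as a $\z_\ell$-module, so every element is integral over $\z_\ell$ and automatically has reduced trace and norm in $\z_\ell$; no case-by-case verification is needed. Your route is more elementary and more general, since it never uses the explicit description of the special order at $\ell$, at the cost of redoing in detail what the paper simply cites from \cite{Miyake2006}.
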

	\begin{proof}
		This lemma is a classical result whenever $R$ is an Eichler order (see \emph{e.g.} Sections 5.1 and 5.2 of \cite{Miyake2006}). We consider the case of a special order. Lemma 5.1.1 of \cite{Miyake2006} tells us that $R_q^\times$ is compact in $B_q^\times$, independently of the order. By definition of special order, it is enough to consider $M(L_\ell,r)$. Since the reduced norm is continuous, $\{x\in 	B_\ell\mid n(x)\in \ell^{r-1}\z_\ell \}$ is open and thus, as the sum is a continuous homomorphism, we deduce the claim.
	\end{proof}
	\begin{proposition}[\cite{P1977}, Proposition 2.13]
		All special orders have finite class number. Moreover, it depends only on the level and not on the specific choice of the special order.
	\end{proposition}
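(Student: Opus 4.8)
The plan is to interpret the class number adelically and then treat finiteness and independence of the choice separately. Recall that the (left) class number $h(R)$ of $R$ is the cardinality of the double coset space $B^\times\backslash\widehat{B}^\times/\widehat{R}^\times$, with $B^\times$ embedded diagonally; this is the set of locally principal $R$-ideal classes. The one structural input we need is Lemma \ref{open compact order}, namely that $\widehat{R}^\times$ is a compact open subgroup of $\widehat{B}^\times$.

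For finiteness I would reduce to the maximal order. Choosing a maximal order $\mathcal{M}\supseteq R$, the inclusion induces $\widehat{R}^\times\subseteq\widehat{\mathcal{M}}^\times$, an inclusion of compact open subgroups and hence of finite index, since the two localizations agree at all but finitely many primes and at each remaining prime $q$ the index $[\mathcal{M}_q^\times:R_q^\times]$ is finite (both being compact open in $B_q^\times$). The natural projection
\[
B^\times\backslash\widehat{B}^\times/\widehat{R}^\times\longrightarrow B^\times\backslash\widehat{B}^\times/\widehat{\mathcal{M}}^\times
\]
is surjective with fibres of cardinality at most $[\widehat{\mathcal{M}}^\times:\widehat{R}^\times]$. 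Since $B$ is definite, $B_\infty^\times$ is compact modulo its centre, and the class number of the maximal order $\mathcal{M}$ is finite by the classical finiteness theorem for $B^\times$ (reduction theory / finiteness of class numbers). Hence $h(R)$ is finite as well.

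For the independence statement, let $R$ and $R'$ be two special orders of the same level $M=N\prod_{\ell\mid D}\ell^{m_\ell}$. At every prime $q\nmid D$ both localizations are $B_q^\times$-conjugate to $M_2^0(N\z_q)$, hence to one another, so the only possible discrepancy is at the ramified primes $\ell\mid D$, where $R_\ell$ is conjugate to $M(L_\ell,m_\ell)$ and $R'_\ell$ to $M(L'_\ell,m_\ell)$ for possibly distinct fields $L_\ell\neq L'_\ell$. When the fields coincide at every $\ell$, the orders are everywhere locally conjugate, so $\widehat{R'}^\times=\hat{g}\,\widehat{R}^\times\hat{g}^{-1}$ for some $\hat{g}\in\widehat{B}^\times$, and right translation $[x]\mapsto[x\hat{g}]$ gives a bijection of the two double coset spaces; thus $h(R)=h(R')$ trivially.

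The genuine content is the case of distinct $L_\ell$, where $M(L_\ell,m_\ell)$ and $M(L'_\ell,m_\ell)$ need not be conjugate even locally: conjugation by $B_\ell^\times$ restricts to a field isomorphism of any embedded quadratic subfield onto its image, so two non-isomorphic quadratic fields (for instance the two ramified extensions, which nonetheless give the same index in the maximal order) yield non-conjugate local orders. Here I would invoke the explicit class number formula of Eichler and Pizer in the definite setting: $h(R)$ is recovered from the mass of $R$ together with a sum of elliptic terms counting optimal embeddings of imaginary quadratic orders into $R$, and both ingredients factor as products of purely local quantities. The main obstacle — and the technical heart carried out in \cite{P1977} — is the local computation at $\ell$ showing that the local mass factor and the local optimal-embedding numbers attached to $M(L_\ell,m_\ell)$ depend only on $m_\ell$ and not on the auxiliary field $L_\ell$; granting this, every term in the formula is a function of the level alone, and the independence follows.
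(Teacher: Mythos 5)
The paper does not actually prove this proposition: it is stated as a quotation of \cite{P1977}, Proposition 2.13, with no argument supplied, so there is no internal proof to compare yours against. On its own merits, your finiteness argument is complete and correct: $\widehat{R}^\times$ is compact open by Lemma \ref{open compact order}, hence of finite index in $\widehat{\mathcal{M}}^\times$ for any maximal order $\mathcal{M}\supseteq R$, the fibres of the projection of double coset spaces are bounded by that index, and finiteness for the maximal order is classical for a definite algebra. Your analysis of the independence statement also locates the difficulty correctly: away from $D$ two special orders of the same level are everywhere locally conjugate, and the genuine content sits at $\ell\mid D$, where for even $m_\ell$ the two ramified choices of $L_\ell$ produce local orders that need not be conjugate. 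However, at exactly that point you write that the required local computation is ``carried out in \cite{P1977}'' --- that is, you cite the source of the very proposition being proved. What you have is therefore a correct and faithful \emph{reduction} of the statement to Pizer's local computations (that the local mass factor and the local optimal-embedding numbers of $M(L_\ell,m_\ell)$ in the Eichler-type class number formula depend only on $m_\ell$ and not on $L_\ell$), not a self-contained proof of it. Since the paper itself treats the proposition as an external citation, this is a reasonable gloss, but you should be explicit that the key local step is being imported rather than established.
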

	\begin{lemma}[\cite{HPS1989orders}, Lemma 7.4]\label{HPS1989orders Lemma 7.4}
		Let $R$ be a special order of level $N$. Then there exists a set of ideal class representatives $\{I_1,\ldots,I_h\}$ for the left $R$-ideal classes, such that $I_i\otimes\z_q=R_q$ for all $q$ dividing the level.
	\end{lemma}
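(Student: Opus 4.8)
The plan is to describe left $R$-ideals locally and then to translate each class representative, on the right, by a suitable global element so as to trivialize its components at the primes dividing the level. Recall that the ideals making up the class set are locally free of rank one, so for every prime $q$ we may write $I\otimes\z_q = R_q b_q$ with $b_q\in B_q^\times$; moreover $I\otimes\z_q = R_q$ holds precisely when $b_q\in R_q^\times$. Right multiplication by a global element $c\in B^\times$ carries a left $R$-ideal $I$ to the left $R$-ideal $Ic$ lying in the same class (its left order is unchanged because $c$ is invertible), and $(Ic)\otimes\z_q = R_q b_q c$. Hence, for a fixed class with representative $I$, it suffices to find one $c\in B^\times$ with $b_q c\in R_q^\times$ --- equivalently $c\in b_q^{-1}R_q^\times$ --- simultaneously for all $q\mid N$.

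The existence of such a $c$ is a simultaneous approximation at the finite set $S=\{q\mid N\}$. First I would invoke Lemma \ref{open compact order} to know that each $R_q^\times$, including at the ramified primes $\ell\mid D$ where $R_\ell=M(L_\ell,m_\ell)$, is open in $B_q^\times$; thus each target $b_q^{-1}R_q^\times$ is a nonempty open subset of $B_q$ avoiding $0$. Fixing a $\q$-basis of $B$ identifies $B$ with $\q^4$ and each $B_q$ with $\q_q^4$ compatibly, and since $\q$ is dense in $\prod_{q\in S}\q_q$, the algebra $B$ is dense in $\prod_{q\in S}B_q$. I can therefore select $c\in B$ lying in $b_q^{-1}R_q^\times$ for every $q\in S$; as $B$ is a division algebra and $c\neq 0$, automatically $c\in B^\times$. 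By construction $(Ic)\otimes\z_q=R_q$ for all $q\mid N$, and $Ic$ lies in the class of $I$. Carrying this out for one representative of each of the $h$ classes yields the desired system $\{I_1,\dots,I_h\}$.

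The steps are routine once the framework is in place, and I expect no genuine obstacle, only bookkeeping: verifying that right translation by a global invertible element preserves both the left order and the ideal class, and checking that the approximation applies uniformly at the ramified prime(s). For the latter the sole input needed is the openness of $R_\ell^\times$ in $B_\ell^\times$, which is exactly Lemma \ref{open compact order}; the fact that $B_\ell$ is a division algebra is harmless, since at no point did the argument require $B_q\cong M_2(\q_q)$. The mild substance of the proof is thus the elementary simultaneous approximation in $B$ combined with the openness and compactness of the local unit groups.
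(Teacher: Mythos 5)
The paper offers no proof of this lemma: it is quoted directly from \cite{HPS1989orders} (Lemma 7.4), so there is no internal argument to compare against. Your proposal is correct and is essentially the standard proof one would expect behind that citation. The three ingredients are all in order: left $R$-ideals in the Hijikata--Pizer--Shemanske setting are by definition locally principal, so writing $I\otimes\z_q=R_qb_q$ is legitimate; right translation $I\mapsto Ic$ by $c\in B^\times$ preserves both the left order and the ideal class, and $(Ic)\otimes\z_q=R_qb_qc$; and weak approximation of $\q$ in $\prod_{q\mid N}\q_q$, transported to $B\cong\q^4$, combined with the openness of $R_q^\times$ in $B_q^\times$ from Lemma \ref{open compact order} (and of $B_q^\times$ in $B_q$), produces a global $c$ lying in every $b_q^{-1}R_q^\times$. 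Two small points you implicitly use and could state: first, $R_qu=R_q$ forces $u\in R_q^\times$, because a one-sided inverse in an order is two-sided ($n(u)\in\z_q^\times$ and orders are stable under the standard involution, so $u^{-1}=\bar u\,n(u)^{-1}\in R_q$); second, your appeal to $B$ being a division algebra to get $c\in B^\times$ uses that $B$ is definite, which holds throughout the paper. Neither is a gap, and the argument is complete.
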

	
	
	\subsection{Characters}\label{lift character}
	
	Let $R$ be a special order of level $M=N\cdot\prod_{\ell|D}\ell^{m_\ell}$ and let $\chi$ be a Dirichlet character with conductor $C$.
	\begin{assumption}\label{chi-R}
		Assume that $\nu_q(N)\geq \nu_q(C)$ for all $q\mid N$ and, for each $\ell\mid D$, that
		\begin{equation}
		    \begin{aligned}
			m_\ell\geq
    			\begin{cases}
    				2\, \nu_\ell(C)-1 & \textrm{if $L_\ell$ is unramified over $\q_\ell$,}\\
    				2\, \nu_\ell(C) & \textrm{if $L_\ell$ is ramified over $\q_\ell$.}
    			\end{cases}  
    		\end{aligned}
		\end{equation}
	\end{assumption}
	\noindent The choice of character in Assumption \ref{chi-R} is motivated by the explicit form of the Jacquet--Langlands correspondence recalled in the Introduction (see Section \ref{quaternionic lifts of modular forms} for details).
	
	We want to extend $\chi$ to a character $\widetilde{\chi}$ of $\widehat{R}^\times$ and for this purpose we must deal with several sub-cases. First of all, we decompose $\chi=\prod_{q\mid C}\chi_q$ by the Chinese Reminder Theorem and we define each $\widetilde{\chi_q}$ as follows.
	\begin{enumerate}
		\item	If $q\mid M$ and $q\nmid C$, we set $\widetilde{\chi_q}(\alpha)=1$ for each $\alpha\in R_q$.
		\item If $q\mid N$ and $q\mid C$, we set $\widetilde{\chi_q}(\alpha)=\chi_q(d)$ for $\alpha=\left(\begin{smallmatrix}
			a & b\\
			c & d
		\end{smallmatrix}\right)\in R_q=M_2^0(N\z_q)$.
		\item If $q$ is odd, $q\mid M/N$ and $q\mid C$ we deal with three further sub-cases:
		\begin{enumerate}[label=(\roman*),font=\itshape]
			\item	If $q\mid\mid C$ and $\chi_q$ is odd, we can always find a lift to $\mathcal{O}_{L_q}/\mathfrak{m}_{L_q}\supseteq \z_q/q\z_q$ and thus to $\mathcal{O}_{L_q}$, which we call $\chi_{L_q}$; here $\mathfrak{m}_{L_q}$ is the maximal ideal of $\mathcal{O}_{L_q}$. Because of Assumption \ref{chi-R}, $R_q=M(L_q,m_q)$ is contained in $M(L_q,2 \nu_q(C)-1)$ (if $L_q$ is unramified) or in $M(L_q,2 \nu_q(C))$, hence we set $\widetilde{\chi_q}(\alpha+\beta)=\chi_{L_q}(\alpha)$ for each $\alpha+\beta\in R_q=M(L_q,m_q)$ (and $\alpha\in \mathcal{O}_{L_q}$).
			\item	If $q^e\mid \mid C$, with $e\geq 1$, and $\chi_q$ is even, we can always find a character $\psi$ such that $\psi^2=\chi_q$ and with conductor $cond(\psi)=cond(\chi_q)$. As remarked in Section 7.2 of \cite{HPS1989orders}, the choice of this character is not important, but the fact that a particular choice is fixed is. We set $\widetilde{\chi_q}(\alpha)=\psi(n(\alpha))$.
			\item	If $q^e\mid \mid C$, with $e>1$, and $\chi_q$ is odd, we write $\chi_q=\varepsilon\cdot \phi$ for a fixed choice of characters $\varepsilon$ odd and with $cond(\varepsilon)=q$, and $\phi$ even. Thus, proceeding analogously to the previous sub-cases, we set $\widetilde{\chi_q}=\widetilde{\varepsilon}\cdot \widetilde{\phi}$.
		\end{enumerate}
		\item If $q=2$, $2\mid N/M$ and $2\mid C$, one proceeds in a similar way as in case $3.a$.
	\end{enumerate}
	Patching together the local lifts, we define
	\begin{equation}
		\widetilde{\chi}(b):=\prod_{q\mid N}\widetilde{\chi_q}(b_{q}),
	\end{equation}
	for any $b\in \widehat{R}^\times$.
    In particular, if $I$ is a lattice in $B$ such that $I_q=I\otimes\z_q=R_q$ for each $q|N$, and $b\in I$, we have
	\begin{equation}
		\widetilde{\chi}(b)=\prod_{q\mid N}\widetilde{\chi_q}(b).
	\end{equation}
	We refer to \cite{HPS1989orders}, Section 7.2 for all the details.
	
	\subsection{Quaternionic modular forms of weight $2$}\label{Quaternionic modular forms of weight $2$}\nocite{Buzzard2004}\nocite{BDI2010}
	
	Take $B$ as in the above Section \ref{special order}, with $R\subset B$ a special order of level $M$. Recall that we fixed isomorphisms $\iota_q:B_q:=B\otimes \q_q\cong M_2(\q_q)$ for each $q\nmid D$, such that $\iota_q:R_q :=R\otimes \z_q\cong M_2^0(M\z_q)$. Set $B(\A_\q)^\times=(B\otimes_\q\A_\q)^\times$ and 
	\begin{equation}
	    R(\A_\q)^\times=\{r\in B(\A_\q)^\times\mid (r_q)_{q<\infty}\in \widehat{R}^\times\}.
	\end{equation}
	We extend any character $\widetilde{\chi}$ as in the above Section \ref{lift character} to $R(\mathbb{A}_{\q})^\times$ imposing $\widetilde{\chi}(b)=\widetilde{\chi}(b_f)$, where $b_f\in \widehat{R}^\times$ is the finite part of $b\in R(\mathbb{A}_{\q})^\times$.
	\begin{definition}\label{quaternionic mf def}
		We define the space of weight-$2$ quaternionic modular forms with level structure $R(\A_\q)^\times$, character $\chi$ satisfying Assumption \ref{chi-R}  and $\cc$-coefficients, as the $\cc$-vector space $S_2(R, \widetilde{\chi})$ of all continuous functions $\varphi:B(\A_\q)^\times\longrightarrow\cc$ satisfying
		\begin{equation}
			\varphi(b \tilde{b} r)=\widetilde{\chi}^{-1}(r)\varphi(\tilde{b}),
		\end{equation}
		for all $b\in B^\times$, $\tilde{b}\in B(\A_\q)^\times$ and $r\in R(\A_\q)^\times$.
	\end{definition}
	As in Chapter 5 of \cite{HPS1989basis}, we can decompose $B(\A_\q)^\times$ as a finite union of distinct double cosets
	\begin{equation}
		B(\A_\q)^\times=\coprod_{i=1}^{h} B^\times x_i R(\A_\q)^\times
	\end{equation}
	where $h=h(R)$ is the class number of $R$. Since $B$ is definite, the analogous decomposition holds for $\widehat{B}^\times$, namely $\widehat{B}^\times=\coprod_{i=1}^{h}B^\times \widehat{x_i} \widehat{R}^\times$, with $\widehat{x_i}=(x_{i,q})_{q<\infty}$. By the above Lemma \ref{HPS1989orders Lemma 7.4}, the representatives $x_i=(x_{i,q})_{q}\in B(\A_\q)^\times$ can be taken to lie in $R(\A_\q)^\times$, in particular $x_{i,q}\in R_q^\times$ for each prime $q|M$. If we fix the representatives in this way, we have an explicit description of quaternionic modular forms. By the definition of a quaternionic modular forms and the double coset decomposition, a quaternionic modular form $\varphi$ is uniquely determined by its values on the representatives. More precisely, for $i=1,\ldots, h$, let $\widetilde{\Gamma}_{x_i}:= B^\times \cap x_i^{-1}R^\times x_i$ and define
	\begin{equation}
		\cc_{\widetilde{\chi},i}:=\left\{c\in\cc\mid \widetilde{\chi}(\gamma)\cdot c=c,\textrm{ for each }\gamma\in \widetilde{\Gamma}_{x_i}\right\}.
	\end{equation}
	As thoroughly explained in \emph{loc.cit.}, the above observations yield the identification
	\begin{equation}
		S_2(R, \widetilde{\chi})\cong \bigoplus_{i=1}^{h}\cc_{\widetilde{\chi},i},
	\end{equation}
	given by $\varphi\longmapsto (\varphi(x_1),\ldots,\varphi(x_h))$. We are allowed to consider different coefficients, in fact the above identification still holds when we replace $\cc$ by $\q(\widetilde{\chi})$, the field extension of $\q$ generated by the values of the character $\widetilde{\chi}$. By extension of scalars we recover $S_2(R, \widetilde{\chi})=S_2(R, \widetilde{\chi} ;\q(\chi))\otimes\cc$ and we can consider $p$-adic coefficients $S_2(R, \widetilde{\chi} ;\overline{\qp})=S_2(R, \widetilde{\chi} ;\q(\chi))\otimes\overline{\qp}$, for $p$ a prime which does not divide the reduced discriminant of $B$.
	\begin{remark}\label{suitable choices}
		All the above constructions and definitions are, up to isomorphism, independent of the specific choice of the special order. Moreover, fixing compatible choices of the lifting characters $\widetilde{\chi}$, all the constructions are compatible with respect to the inclusion of special orders.
	\end{remark}
	We end this section with the following fact: often the groups $\widetilde{\Gamma}_{x_i}$ have cardinality $2$, \emph{i.e.} $\widetilde{\Gamma}_{x_i}=\{\pm 1 \}$.
	\begin{proposition}[\cite{Pizer80p2}, Proposition 5.12]\label{torsion free Gammas}
		Let $R$ be a special order of level $M\ell^2$ in the quaternion algebra over $\q$ ramified exactly at $\ell$ and $\infty$. Then
		\begin{equation}
			\# R^\times =\begin{cases}
				2 & \textrm{ if }\ell>3,\\
				\textrm{either $2$ or $6$} & \textrm{ if }\ell=3.
			\end{cases}
		\end{equation}
		Moreover, if $\ell=3$ and $2|M$ or $M$ is divisible by a prime $q\equiv 2 \pmod{3}$, then $\# R^\times=2$.
	\end{proposition}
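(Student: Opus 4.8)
The plan is to use that $B$ is definite. Fixing $B\otimes_\q\rr\cong\mathbb{H}$, the elements of reduced norm one form the compact group $SU(2)\cong S^3$; for $u\in R^\times$ one has $n(u),n(u^{-1})\in\z_{>0}$ with $n(u)\,n(u^{-1})=1$, so $n(u)=1$ and $R^\times$ is a \emph{finite} subgroup of $SU(2)$. Every $u\in R^\times$ is therefore a root of unity generating a subfield $\q(u)=\q(\zeta_m)\hookrightarrow B$ with $[\q(\zeta_m):\q]\le 2$, whence $m\in\{1,2,3,4,6\}$; since $-1\in R^\times$ we get $2\mid\#R^\times$. The problem thus reduces to deciding which units of order $3,4,6$ occur in $R$.

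First I would exclude units of order $4$, which also forces $R^\times$ to be cyclic: every non-cyclic finite subgroup of $SU(2)$ (binary dihedral or binary polyhedral) contains an element of order $4$, so after excluding these $R^\times$ is cyclic, hence --- being of even order with all element-orders in $\{1,2,3,4,6\}$ and none equal to $4$ --- of order $2$ or $6$. For the exclusion I localize at $\ell$. As the level is $M\ell^{2}$ the exponent $m_\ell=2$ is even, so $L_\ell/\q_\ell$ is ramified and $R_\ell=M(L_\ell,2)=\mathcal{O}_{L_\ell}+\mathfrak{P}$, with $\mathfrak P$ the maximal ideal of the maximal order of the division algebra $B_\ell$. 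Because $L_\ell$ is ramified, $R_\ell/\mathfrak P\cong\mathbb{F}_\ell$ and reduction maps $R_\ell^\times$ onto $\mathbb{F}_\ell^\times$; moreover $1+\mathfrak P$ is pro-$\ell$, so reduction is injective on prime-to-$\ell$ torsion. An order-$4$ unit has order prime to the odd prime $\ell$, hence reduces to an order-$4$ element of $\mathbb{F}_\ell^\times$, forcing $4\mid\ell-1$; but $\q(\sqrt{-1})\hookrightarrow B$ forces $\ell\not\equiv1\pmod4$, a contradiction. So $\#R^\times\in\{2,6\}$.

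Next I would apply the same reduction to units of order $3$ (equivalently $6$, as $-1\in R^\times$), which generate $\q(\sqrt{-3})=\q(\zeta_6)$. If $\ell>3$ then $3$ is prime to $\ell$: such a unit reduces to an order-$3$ element of $\mathbb{F}_\ell^\times$, forcing $3\mid\ell-1$, while $\q(\sqrt{-3})\hookrightarrow B$ forces $\ell\not\equiv1\pmod3$; the contradiction gives $R^\times=\{\pm1\}$ and $\#R^\times=2$. The case $\ell=3$ is different precisely because $\q(\sqrt{-3})$ ramifies at $3$: now $\sqrt{-3}\in\mathfrak P$, so $\zeta_6=\tfrac{1+\sqrt{-3}}2\equiv\tfrac12\equiv-1\pmod{\mathfrak P}$ and the residue obstruction disappears, placing $\zeta_6$ in $R_3$. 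Hence for $\ell=3$ both values remain possible and $\#R^\times\in\{2,6\}$.

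Finally, for the ``moreover'' clause I would impose the local conditions away from $\ell$. An order-$6$ unit lies in $R^\times=B^\times\cap\widehat R^\times$ only if $\zeta_6\in R_q^\times$ for every $q\mid M$; at $q\mid N$ the order $R_q$ is conjugate to the mod-$q^{\nu_q(M)}$ upper-triangular subgroup of $M_2(\z_q)$, so $\iota_q(\zeta_6)$ must stabilise a line modulo $q$, i.e.\ its characteristic polynomial $X^2-X+1$ must have a root in $\mathbb{F}_q$. This fails for $q=2$ (there $X^2+X+1$ is irreducible) and for $q\equiv2\pmod3$ (there $-3$ is a non-residue), so if $\ell=3$ and either $2\mid M$ or some prime $q\equiv2\pmod3$ divides $M$, no order-$6$ unit exists and $\#R^\times=2$. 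I expect the \textbf{main obstacle} to be the local analysis at $\ell$: correctly reading off the residue field of $R_\ell=M(L_\ell,2)$ from the ramified $L_\ell$, and tracking the reductions of the roots of unity --- in particular the collapse $\zeta_6\equiv-1\pmod{\mathfrak P}$ that singles out $\ell=3$ as the only case admitting extra units.
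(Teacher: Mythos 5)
The paper does not actually prove this proposition --- it is imported verbatim from Pizer (\emph{loc.cit.}, Proposition 5.12) with no argument given --- so there is no in-paper proof to compare against; I can only assess your argument on its own terms, and it is correct and self-contained. The skeleton is sound: definiteness forces $n(u)=1$ and finiteness of $R^\times$, torsion units generate cyclotomic fields of degree $\le 2$ over $\q$, so only orders $1,2,3,4,6$ can occur; the exclusion of order $4$ (which also kills the non-cyclic binary groups) and then of order $3$ for $\ell>3$ is obtained by playing the residue map $R^\times_\ell\twoheadrightarrow(\,R_\ell/\mathfrak{P})^\times\cong\mathbb{F}_\ell^\times$, injective on prime-to-$\ell$ torsion, against the splitting criterion for embeddings of $\q(i)$ and $\q(\sqrt{-3})$ into $B$; and the ``moreover'' clause is the correct local obstruction at $q\mid M$, namely that $X^2-X+1$ has no root in $\mathbb{F}_2$ or in $\mathbb{F}_q$ for $q\equiv 2\pmod 3$. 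The one step you should make explicit rather than implicit is that $R_\ell/\mathfrak{P}\cong\mathbb{F}_\ell$ \emph{because} $L_\ell$ is ramified: Definition \ref{def special orders} by itself allows an unramified $L_\ell$ with $m_\ell=2$, in which case $R_\ell/\mathfrak{P}\cong\mathbb{F}_{\ell^2}$, the congruences $4\mid\ell^2-1$ and $3\mid\ell^2-1$ are automatic, and the statement genuinely fails (e.g.\ $\zeta_3$ lies in such an order for $\ell=5$, $M=1$). You are relying on the convention fixed in Section \ref{special order} (even exponent $\Rightarrow$ ramified $L_\ell$), which is indeed the setting of Pizer's Proposition 5.12, but since this hypothesis is what makes the whole argument work it deserves a sentence. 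Your proof is also arguably more elementary than Pizer's original treatment, which sits inside his machinery of optimal embeddings and Eichler-style embedding numbers for the quadratic orders $\z[i]$ and $\z[\zeta_6]$; what your direct local computation buys is independence from that apparatus, at the cost of invoking the classification of finite subgroups of $SU(2)$ (which could be avoided here, since a group all of whose elements have order in $\{1,2,3,6\}$ and with $-1$ as its unique involution is forced to be cyclic of order $2$ or $6$ once order $4$ is excluded).
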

	
	\subsection{$p$-adic quaternionic modular forms for special orders}\label{$p$-adic quaternionic modular forms for special orders}
	
	Let $p$ and $\ell$ be two distinct rational odd primes. From now on, we denote by $B$ the (unique up to isomorphism) definite quaternion algebra over $\q$ with discriminant $\ell$ and we fix $R$ to be a maximal order in $B$. For $N$ a fixed positive integer, prime to both $p$ and $\ell$, consider a family of nested special orders $\{R^{n}\}_{n\geq 0}$ satisfying
	\begin{equation}
		\cdots \subset R^{n+1}  \subset R^n\subset \cdots R^0\subset R, \,\,\,\,\,\textrm{ $R^n$ is a special order of level $Np^n\ell^{2r}$,}
	\end{equation}
	where $r\geq 1$. Up to conjugation, we can suppose that the orders $R_{n}$ are all canonical orders of level $Np^n\ell^{2r}$, that is, as in Definition \ref{def special orders}. For any prime $q$ different from $\ell$, we assume that the fixed isomorphism $\iota_q:B_q\cong M_2(\q_q)$ satisfies $\iota_q R_q^\times \cong GL_2(\z_q)$ and
	\begin{equation}
	    \iota_q(R^n_q)^\times\cong \Gamma_0(Np^n\z_q):=M_2^{0}(Np^n\z_q)\cap GL_2(\z_q).
	\end{equation}
	\begin{definition}\label{U1(R)}
		We define (\emph{cf.} Lemma \ref{open compact order}) open compact subgroups $U_n\subset \widehat{B}^\times$,
		\begin{equation}
			U_n:=U_1(R^n)
			:=\left\{g=(g_q)\in \widehat{R}^{n\times} \mid \iota_q(g_q) \equiv \left(\begin{smallmatrix}
				* & * \\
				0 & 1
			\end{smallmatrix}\right)\pmod{Np^n\z_q},\textrm{ for }q\mid Np^n\right\}.
		\end{equation}
		By construction, $U_{n+1}\subset U_n\subset\ldots\subset U_0$. Given any special order $R'$, we denote by $U_1(R')$ the corresponding compact open subgroup defined analogously to $U_n$.
	\end{definition}
	For any commutative ring $A$, we consider the left action of $M_2(A)$ on the polynomial ring $A[X,Y]$, defined as
	\begin{equation}
		\gamma\cdot P(X,Y):=P\left((X,Y)\gamma\right),
	\end{equation}
	for $P\in A[X,Y]$ and $\gamma\in M_2(A)$. We denote by $L_{m}(A)$ the submodule of $A[X,Y]$ consisting of homogeneous polynomials of degree $m$; by definition, $L_{m}(A)$ is stable under the action of $M_2(\z)$. Its dual module $V_{m}(A)$ is endowed with the right action
	\begin{equation}
		\mu|_{\gamma}(P(X,Y)):=\mu(\gamma\cdot P((X,Y)) ),
	\end{equation}
	for any $\mu\in V_{m}(A)$ and $P\in L_{m}(A)$.\\
	
	We take now $\mathcal{O}$ to be a finite flat extension of $\zp$, which we assume to contain all the $\phi(Np^n\ell^{2r})$-th roots of unity, where $\phi$ is Euler's totient function. Given an $\mathcal{O}$-algebra $A$, any $A$-valued Dirichlet character $\psi$ modulo $Np^n\ell^{2r}$, can be lifted to $\psi_{\A}:\q^\times\backslash \A_{\q}^\times\longrightarrow A^\times,$ its \emph{ad\`{e}lization}, that is, the unique finite order Hecke character
	\begin{equation}
		\psi_{\A}:\q^\times\backslash \A_{\q}^\times/\mathbb{R}_{+}(1+Np^n\ell^{2r}\widehat{\z})^{\times}\longrightarrow A^\times
	\end{equation}
	such that $\psi_{\A}(\varpi_q)=\varepsilon^{-1}(q)$, for every $q\nmid Np^n\ell^{2r}$ and $\varpi_q=(\varpi_{q,l}\mid \varpi_{q,l}=1\textrm{ if }l\neq q, \varpi_{q,q}=q)\in \A_{\q,f}^\times$. 
	
	We fix such a Dirichlet character $\psi$ modulo $Np^n\ell^{2r}$ with small conductor at $\ell$. More precisely, as in \cite{HPS1989basis}, we enforce the following assumption. 
	\begin{assumption}\label{psi hyp}
		The $\ell$-component of $\psi$, $\psi_{\ell^{2r}}$, is either the trivial character modulo $\ell$ or an odd character of conductor exactly $\ell$.
	\end{assumption}
	\begin{definition}\label{quaternionic mf k def}
		Let $k\geq 2$ be an integer and let $U_n$ be as in Definition \ref{U1(R)}. For an $\mathcal{O}$-algebra $A$ and an $A$-valued Dirichlet character $\psi=\psi_{Np^n}\psi_{\ell^{2r}}$ (for $\psi_{Np^n}$ modulo $Np^n$ and $\psi_{\ell^{2r}}$ modulo $\ell^{2r}$) we define the space of $p$-adic quaternionic modular forms of weight $k$, level $Np^n\ell^{2r}$ and character $\psi$ as
		\begin{multline}
			S_k(U_n,\psi,A):=\Big\{\varphi:\widehat{B}^\times\longrightarrow V_{k-2}(A)\mid \varphi(b\tilde{b}uz)=\psi_{Np^n,\A}^{-1}(z)\psi_{\ell^{2r},\A}^{-1}(z)z_p^{k-2}\widetilde{\psi_{\ell^{2r}}}(u_\ell)\varphi(\tilde{b})|_{u_p},\\ 
			\text{ for } b\in B^\times,\ph{.}\tilde{b}\in \widehat{B}^\times,\ph{.} u\in U_n,\ph{.}z\in \A_{\q,f}^\times \Big\}.
		\end{multline}
	\end{definition}
	\noindent This space can be identified with the space of functions $\varphi:B^\times\backslash \widehat{B}^\times\longrightarrow V_{k-2}(A)$ satisfying
	\begin{equation}
		\varphi(z\tilde{b})=\psi_{Np^n,\A}^{-1}(z)\psi_{\ell^{2r},\A}^{-1}(z)z_p^{k-2}\varphi(\tilde{b})
	\end{equation}
	for $\tilde{b}\in B^\times\backslash\widehat{B}^\times$ and $z\in \A_{\q,f}^\times$, and such that $(u\cdot\varphi)(\tilde{b}):=\varphi|_{u_p^{-1}}(\tilde{b}u)=\widetilde{\psi_{\ell^{2r}}}(u_\ell)\varphi(\tilde{b}),$ for any $u\in U_n$ and any $\tilde{b}\in \widehat{B}^\times$.
	
	\begin{remark}
	    The term \emph{$p$-adic quaternionic modular forms} refers to the fact that we are considering $p$-adic coefficients, and the action at the place $p$ instead of the action at infinity (see the next Section \ref{quaternionic mf of higher weight}).
	\end{remark}
	
	\subsection{Quaternionic modular forms of higher weight}\label{quaternionic mf of higher weight}
	
	The definition of classical quaternionic modular forms for higher weight is similar to the one for weight 2. We fix an identification $\iota_\infty:B_\infty\hookrightarrow M_2(\cc)$ in order to compare $p$-adic and classical quaternionic modular forms.
	\begin{definition}\label{quaternionic mf wt k def}
		We define the space of weight-$k$, $k\geq 2$, quaternionic modular forms with level structure $R^n(\A_\q)^\times$, character $\chi$ satisfying Assumption \ref{chi-R} and $\cc$-coefficients, as the $\cc$-vector space $S_k(R^n, \widetilde{\chi})$ of all continuous functions $\varphi_\infty:B(\A_\q)^\times\longrightarrow V_{k-2}(\cc)$ satisfying
		\begin{equation}
			\varphi_\infty(b\tilde{b}b_\infty r)=\widetilde{\chi}^{-1}(r)|n(b_\infty^{-1})|_\A^{(k-2)/2}b_\infty^{-1}\cdot\varphi_\infty(\tilde{b})
		\end{equation}
		for all $b\in B^\times$, $b_\infty\in B_\infty^\times$, $\tilde{b}\in B(\A_\q)^\times$ and $r\in R^n(\A_\q)^\times$.
	\end{definition}
	As explained in Chapter 2 of \cite{Hida1988b}, we can identify classical and $p$-adic modular forms. We identify $\cp$ with $\cc$ compatibly with the fixed inclusion $\overline{\q}_p\hookrightarrow \cc$ and associate (see also \cite{Hsieh2021}, Eq. (4.4)) to $\varphi\in S_k(U_n,\psi,\cp)$ the form $\Phi_{\infty}(\varphi)\in S_k(R^n, \widetilde{\psi})$, defined as
	\begin{equation}
		\Phi_{\infty}(\varphi)(\tilde{b}b_\infty)=|n(\tilde{b}b_\infty)|_\A^{(k-2)/2} b_\infty^{-1}\cdot\left(\tilde{b}_p\cdot \varphi(\tilde{b})\right),
	\end{equation}
	for $b_\infty\in B_\infty^\times$ and $\tilde{b}\in B(\A_\q)^\times$.
	
	\subsection{Quaternionic Eisenstein series and newforms}\label{Quaternionic Eisenstein series and newforms}
	
	We recall the notions of quaternionic Eisenstein series and quaternionic newforms as presented in \cite{HPS1989basis}, Chapters 5 and 7. For this section, we take $A$ to be a $\qp$-module. We begin with the Eisenstein series part of $S_k(U_n,\psi,A)$, that is
	\begin{equation}
		S^{Eis}_k(U_n,\psi,A):=\Big\{f\in S_k(U_n,\psi,A)\mid \exists\ph{.} g:\A_{\q,f}^\times\longrightarrow A^{k-1}
		\textrm{ s.t. }f(\tilde{b})=g(n(\tilde{b}))\Big\},
	\end{equation}
	where $n:\widehat{B}^\times\longrightarrow \A_{\q,f}^\times$ is the extension of the quaternionic norm to $\widehat{B}$. In other words, $S^{Eis}_k(U_n,\psi,A)$ is the space of quaternionic modular forms factoring through the reduced norm map. As proved by Propositions 5.2, 5.3 and the discussion after Proposition 5.4 in \emph{loc.cit.}, this space is often trivial, in fact
	\begin{equation}
		S^{Eis}_k(U_n,\psi,A)=	\begin{cases}
			\{0\} & \textrm{if $k>2$ or $\psi_{Np^n}$ is non trivial,}\\
			A^{[\z_\ell^\times : n((R^n_\ell)^\times)]} & \textrm{if $k=2$ and $\psi_{Np^n}$ is trivial.}
		\end{cases}
	\end{equation}
	In particular, $S^{Eis}_k(U_n,\psi,A)$ has at most rank 2. Defining the Petersson inner product as in \cite{Shimizu1965} or \cite{Gross87}, one can consider the orthogonal complement of $S^{Eis}_k(U_n,\psi,A)$ in $S_k(U_n,\psi,A)$, namely
	\begin{equation}
		\mathscr{S}_k(U_n,\psi,A)
		:= \begin{cases}                  
			S_k(U_n,\psi,A)/S^{Eis}_k(U_n,\psi,A) & \textrm{if $k=2$ and $\psi_{Np^n}$ is trivial,}\\
			S_k(U_n,\psi,A) & \textrm{otherwise.}
		\end{cases}
	\end{equation}
	\begin{definition}
		We call $\mathscr{S}_k(U_n,\psi,A)$ the space of $A$-valued cuspidal quaternionic modular forms of level $U_n$ and character $\psi$.
	\end{definition}
	Inside of this space, we find the so-called space of old forms, $\mathscr{S}^{old}_k(U_n,\psi,A)$, defined to be the subspace of $\mathscr{S}_k(U_n,\psi,A)$ spanned by all $\mathscr{S}_k(U_1(R'),\psi,A)$ for each special order $R'\subset R_n$ for which $S_k(U_1(R'),\psi,A)$ makes sense. One should pay attention to fix a suitable ramified extension of $\q_\ell$, but we point the reader to Remarks 7.13 and 7.14 of \cite{HPS1989basis} for further details. Finally, we define the space of quaternionic newforms $\mathscr{S}^{new}_k(U_n,\psi,A)$ as the orthogonal complement of $\mathscr{S}^{old}_k(U_n,\psi,A)$, with respect to the Petersson inner product, inside $\mathscr{S}_k(U_n,\psi,A)$.
	
	\section{Hecke algebras and lifts to quaternionic modular forms}
	
	 One of Hida's main results is the extension of the classical duality between the Hecke algebra and the space of classical modular forms to $p$-adic families. The analogous result can be recovered in the quaternionic setting, when one considers Eichler orders or special orders with odd exponent at the primes of ramification, but in the case of special orders with even exponent, this is no more true (see Remark \ref{no-duality and Hecke ext}). Even though one cannot speak about duality anymore, it is indeed possible to recover the correct dimension result for proving a rank-2 Hida theory.
	
	\subsection{Hecke operators}\label{Hecke operators}
	
	For any prime $q$, recall the element introduced in Section \ref{$p$-adic quaternionic modular forms for special orders}, $\varpi_q\in\A_{\q,f}^{\times}$ such that $\varpi_{q,q}=q$ and $1$ otherwise. Let $A$ be again an $\mathcal{O}$-algebra and take $\varphi\in S_k(U_n,A)$. On this quaternionic space we have (for any $\tilde{b}\in\widehat{B}^\times$) the Hecke operators $T_q$,
	\begin{equation}
		T_q \varphi(\tilde{b}) =\begin{cases}
			\varphi\left(\tilde{b}\mat{1}{0}{0}{\varpi_q}\right) + \displaystyle\sum_{a\in\z/q\z}\varphi\left(\tilde{b}\mat{\varpi_q}{a}{0}{1}\right) & \textrm{ for each prime }q\nmid Np^n\ell^{2r},\\
			\\
			\varphi|_{\mat{1}{0}{0}{p}}\left(\tilde{b}\mat{1}{0}{0}{\varpi_p}\right) + \displaystyle\sum_{a\in\z/q\z}\varphi|_{\mat{p}{a}{0}{1}}\left(\tilde{b}\mat{\varpi_p}{a}{0}{1}\right) & \textrm{ for }q=p \textrm{ and }n=0,
		\end{cases}
	\end{equation}
	and the Hecke operators $U_q$,
	\begin{equation}
		U_q \varphi(\tilde{b}) =\begin{cases}
			\displaystyle\sum_{a\in\z/q\z}\varphi\left(\tilde{b}\mat{\varpi_q}{a}{0}{1}\right) & \text{ for }q\mid N,\\
			\\
			\displaystyle\sum_{a\in\z/p\z}\varphi|_{\mat{p}{a}{0}{1}}\left(\tilde{b}\mat{\varpi_p}{a}{0}{1}\right) & \textrm{ for }q=p \textrm{ and }n>0.
		\end{cases}
	\end{equation}
	We also consider the quaternionic operator at $\ell$, $\tilde{U}_\ell$ which is defined as
	\begin{equation}
		\tilde{U}_\ell \varphi(\tilde{b}) =\varphi\left(\tilde{b}\tilde{\varpi}_\ell\right),
	\end{equation}
	for $\tilde{\varpi}_\ell$ such that $\tilde{\varpi}_{\ell,\ell}$ is a units in the maximal order at $\ell$ of norm $n(\tilde{\varpi}_{\ell,\ell})=\ell$, and $\tilde{\varpi}_{\ell,q}=1$ elsewhere. For each $d\in \Delta_{N p^n\ell^{2r}}:= (\z/Np^n\ell^{2r}\z)^\times$, we also recall the diamond operator $\langle d\rangle$ with its usual definition on classical modular forms and straightforwardly extended to the $p$-adic quaternionic case.
	On the space of classical modular forms $S_k(\Gamma_1(Np^n\ell^{2r}),\psi,A)$ we have the usual operators with a similar expression to the quaternionic ones except at $\ell$, where the definition of $U_\ell$ is analogous to the above $U_q$ operators.
	
	\subsection{Hecke algebras}\label{Hecke algebras}
	
	For each $n\geq 1$, let $\mathsf{H}^1_n(A)$ be the Hecke algebra generated over $A$ by all Hecke operators and the diamond operators away from the level, which acts on $S_k(\Gamma_1(Np^n\ell^{2r}),A)$. We denote by $\mathsf{H}_n(A)$ the direct summand of $\mathsf{H}^1_n(A)$ acting on $S_k(\Gamma_1(Np^n\ell^{2r}),\psi,A)$ and by $\mathsf{h}_n(A)$ the Hecke algebra acting on the space of newforms $S_k^{new}(\Gamma_1(Np^n\ell^{2r}),\psi,A)$. For each $m>n$ we have the projection maps $\mathsf{H}^{1}_m(A)\twoheadrightarrow \mathsf{H}^1_n(A)$ and the same holds true for the subalgebras $\mathsf{H}_n(A)$ and $\mathsf{h}_m(A)$. We construct the projective limits with respect to these maps,
	\begin{align}
		& \mathsf{H}^1_\infty(A)=\varprojlim \mathsf{H}^1_n(A), & \mathsf{H}_\infty(A)=\varprojlim \mathsf{H}_n(A) && \text{and} & &\mathsf{h}_\infty(A)=\varprojlim \mathsf{h}_n(A),
	\end{align}
	together with the projection maps $\mathsf{H}^1_\infty(A)\twoheadrightarrow \mathsf{H}_\infty(A)\twoheadrightarrow \mathsf{h}_\infty(A)$. For any $n\geq 1$, we define $\mathsf{H}^{1,ord}_n(A)$ to be ordinary part of $\mathsf{H}^1_n(A)$, namely the product of all the localizations of $\mathsf{H}^1_n(A)$ on which $U_p$ is invertible, and denote by $\mathsf{e}_n$ the corresponding projector $\mathsf{e}_n:\mathsf{H}^{1}_n(A)\twoheadrightarrow \mathsf{H}^{1,ord}_n(A)$. Similarly we define $\mathsf{H}^{ord}_n(A)$ and $\mathsf{h}^{ord}_n(A)$, together with the corresponding ordinary projectors, which we denote by the same symbol $\mathsf{e}_n$. Passing to the limit we obtain $\mathsf{H}^{1,ord}_\infty(A)$, $\mathsf{H}^{ord}_\infty(A)$ and $\mathsf{h}^{ord}_\infty(A)$, each of them equipped with the corresponding ordinary projector $\mathsf{e}_\infty=\varprojlim \mathsf{e}_n$.
	\begin{remark}\label{remark ul=0}
		It is well known (see \emph{e.g.} \cite{Li1975}, Theorem 3) that Assumption \ref{psi hyp} forces the Hecke operator $U_\ell$ to be trivial on the space of classical modular forms of level $\ell^{2r}$. We can then identify each $\mathsf{h}_n(A)$ with the Hecke $A$-algebra
		\begin{equation}
			\mathsf{h}^{(\ell)}_n(A)\subseteq End(S_k(\Gamma_1(Np^n\ell^{2r}),\psi,A))
		\end{equation}
		generated by all diamond and Hecke operators, except $U_\ell$.
	\end{remark}
	
	On the quaternionic side we proceed similarly. For each $n\geq 1$, let $\mathsf{H}^{B}_n(A)$ be the Hecke algebra acting on $S_k(U_n,\psi,A)$, generated over $A$ by the Hecke and diamond operators. We denote by $\mathsf{h}_n^{B}(A)$ the component acting on the space of newforms $\mathscr{S}_k^{new}(U_n,\psi,A)$. For each $n\geq 1$ we have the projection maps $\mathsf{H}^{B}_n(A)\twoheadrightarrow \mathsf{h}_n^{B}(A)$ and we construct the projective limits with respect to these maps,
	\begin{align}
		& \mathsf{H}^{B}_\infty(A)=\varprojlim \mathsf{H}^{B}_n(A) && \text{ and } & &\mathsf{h}^{B}_\infty(A)=\varprojlim \mathsf{h}^{B}_n(A),
	\end{align}
	together with the projection map $\mathsf{H}^{B}_\infty(A)\twoheadrightarrow \mathsf{h}^{B}_\infty(A)$. In the end, we define as above the ordinary Hecke algebras $\mathsf{H}^{B,ord}_n(A)$ and $\mathsf{h}^{B,ord}_n(A)$, and obtain $\mathsf{H}^{B,ord}_\infty(A)$ and $\mathsf{h}^{B,ord}_\infty(A)$ as inverse limit of the $\mathsf{H}^{B,ord}_n(A)$ and $\mathsf{h}_n^{B,ord}(A)$ respectively.
	
	The Jacquet--Langlands correspondence provides a compatible morphism between the classical and the quaternionic side, that is
	\begin{equation}
		JL_{\infty}:\mathsf{h}_\infty(A)\longrightarrow \mathsf{H}^{B}_\infty(A),
	\end{equation}
	which preserves the Hecke and diamond operators away from the discriminant of the quaternion algebra.
	
	Let $\tilde{\Lambda}=\mathcal{O} [\![ \zp^\times ]\!]$ be the finite flat extension of the Iwasawa algebra $\zp[\![ \zp^\times ]\!]$ obtained from $\mathcal{O}$. We remark that by construction, the algebra $\mathsf{H}^{ord}_\infty$ is naturally a $\tilde{\Lambda}$-algebra; moreover, one can prove that it is finitely generated over $\tilde{\Lambda}$.
	We define the two universal $\tilde{\Lambda}$-adic Hecke algebras
	\begin{align}
		\mathsf{H}_{univ}:=&\tilde{\Lambda}[T_q, U_l, \langle d\rangle, \text{ for }q\nmid Np^n, l\mid Np\ell^{2r}, d\in \Delta_{Np^n\ell^{2r}}],\\ \mathsf{H}_{univ}^B:=&\tilde{\Lambda}[\tilde{U}_\ell, T_q, U_l, \langle d\rangle, \text{ for }q\nmid Np^n, l\mid Np, d\in \Delta_{Np^n\ell^{2r}}]
	\end{align}
	and, as in \cite{LongoVigni2012}, we obtain the compatible morphisms 
	\begin{align}
		& \mathsf{H}_{univ}\longrightarrow \mathsf{H}^{ord}_\infty(A) && \text{ and } & & \mathsf{H}_{univ}^B\longrightarrow \mathsf{H}^{B,ord}_\infty(A).
	\end{align}
	\begin{definition}
	    It is useful to formally introduce the notation considered in Remark \ref{remark ul=0}. More precisely, let $\mathsf{H}$ be any one of the Hecke algebras defined above and let $m$ be any positive integer. We denote by $\mathsf{H}^{(m)}$ the Hecke-subalgebra of $\mathsf{H}$ generated by the Hecke and diamond operators away from $M$.
	\end{definition}
	
	\subsection{Quaternionic lifts of modular forms and the failure of the duality}\label{quaternionic lifts of modular forms}
	
	We analyze more carefully \cite{HPS1989basis},  recalling the results which we need. Let $\left(\frac{-}{\ell}\right)$ be the Kronecker character at $\ell$ and $F=Frac(\mathcal{O})$, a field extension of $\qp$. For any space of modular forms $S_k(M,\varepsilon,F)$ and each Dirichlet character modulo $M$, we denote by $S_k(M,\varepsilon,F)^{\otimes\chi}$ the space of all the modular forms which are twists by $\chi$ of modular forms in $S_k(M,\varepsilon,F)$.
	\begin{theorem}[\cite{HPS1989basis}, Theorem 7.10]\label{JL-expl odd exp}
		Let $R'$ be a special order of level $M\ell^{2r+1}$ (so $L_\ell$ is the unramified quadratic extension of $\q_\ell$). Let $\varepsilon$ be a character modulo $N$ such that $\varepsilon_{\ell^{2r+1}}$ is either the trivial character modulo $\ell$ or an odd character modulo $\ell$. Suppose moreover that $\varepsilon$ is even and that $r\geq \nu_\ell(cond(\varepsilon_\ell))$. Then there exist a Hecke--equivariant isomorphism
		\begin{equation}
			\mathscr{S}_k^{new}(U_1(R'),\widetilde{\varepsilon},\mathbb{C})\cong S_k^{new}(\Gamma_1(M \ell^{2r+1}),\varepsilon,\mathbb{C}).
		\end{equation}
	\end{theorem}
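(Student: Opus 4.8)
The plan is to pass through automorphic representations and reduce the global assertion to a purely local computation at $\ell$, the two inputs being the Jacquet--Langlands correspondence and strong multiplicity one for $GL_2$. (The route of \cite{HPS1989basis} is instead the Eichler--Selberg trace formula, matching the traces of all Hecke operators on the two sides; I sketch the representation-theoretic variant, which makes the role of the hypotheses most transparent.) Since $B$ is definite and ramified exactly at $\ell$ and $\infty$, the space $\mathscr{S}_k^{new}(U_1(R'),\widetilde{\varepsilon},\cc)$ decomposes as a direct sum over irreducible automorphic representations $\pi=\bigotimes_q\pi_q$ of $\widehat{B}^\times$ of the prescribed weight and central character, each contributing $\prod_q m_q(\pi_q)$, where $m_q(\pi_q)$ is the dimension of the relevant local eigenspace. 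Global Jacquet--Langlands attaches to each such $\pi$ a cuspidal automorphic representation $\pi'=\mathrm{JL}(\pi)$ of $GL_2(\A_\q)$ with $\pi'_q\cong\pi_q$ for $q\neq\ell,\infty$, square-integrable at $\ell$, and discrete series of weight $k$ at $\infty$; the archimedean matching is the weight normalization recorded in Section \ref{quaternionic mf of higher weight}.

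The first point to establish is that the classical side is exhausted by such transfers, i.e.\ that every newform in $S_k^{new}(\Gamma_1(M\ell^{2r+1}),\varepsilon,\cc)$ is square-integrable at $\ell$ and hence lies in the image of $\mathrm{JL}$. This is where Assumption \ref{psi hyp} enters arithmetically. A newform of level $M\ell^{2r+1}$ has $\pi'_\ell$ of conductor exponent $2r+1$, while $\varepsilon_\ell$ of conductor at most $\ell$ bounds its central character. A twist $\mathrm{St}\otimes\chi$ of Steinberg has conductor exponent $1$ (if $\chi$ is unramified) or even (if $\chi$ is ramified), so it cannot have the odd exponent $2r+1\geq 3$. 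A principal series $\pi(\chi_1,\chi_2)$ with $a(\chi_1)+a(\chi_2)=2r+1$ has exponents of opposite parity, so $a(\chi_1)\neq a(\chi_2)$, whence its central character $\chi_1\chi_2$ has conductor exponent $\max(a(\chi_1),a(\chi_2))\geq r+1\geq 2$, contradicting Assumption \ref{psi hyp}. Therefore $\pi'_\ell$ is forced to be supercuspidal, hence square-integrable, and the two collections of representations are in bijection under $\mathrm{JL}$.

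The crux is then the local comparison at $\ell$. Away from $\ell$ the multiplicities $m_q(\pi_q)$ coincide with the classical local new/old-vector counts, by the choice of $\iota_q$ identifying $(R'_q)^\times$ with the relevant $\Gamma_0/\Gamma_1$-type group and the design of the lift $\widetilde{\varepsilon_q}$ in Section \ref{lift character}, both being governed by Casselman's newvector theory. At $\ell$, $B_\ell$ is the division algebra, $R'_\ell=M(L_\ell,2r+1)$ with $L_\ell/\q_\ell$ unramified (as dictated by Definition \ref{def special orders} and the odd exponent), and $\widetilde{\varepsilon_\ell}$ is the character of $(R'_\ell)^\times$ obtained by inflating $\varepsilon_\ell$ through $\mathcal{O}_{L_\ell}^\times$. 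One must show that $m_\ell(\pi_\ell)=\dim\mathrm{Hom}_{(R'_\ell)^\times}(\widetilde{\varepsilon_\ell},\pi_\ell)$ equals the dimension of the $\varepsilon_\ell$-newvector line of the supercuspidal $\pi'_\ell$ of conductor exponent $2r+1$, and that both equal $1$. The hypotheses $L_\ell$ unramified, $\varepsilon$ even, and $r\geq\nu_\ell(\mathrm{cond}(\varepsilon_\ell))$ are precisely what guarantee that $\widetilde{\varepsilon_\ell}$ is well defined on $(R'_\ell)^\times$ and single out the supercuspidal types carried by this special order, for which the eigenspace is one-dimensional.

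Finally, for Hecke-equivariance and assembly: the map $JL_\infty$ of Section \ref{Hecke algebras} intertwines all Hecke and diamond operators away from $\ell$, so the identification of the one-dimensional local line at $\ell$ with the classical newvector line, tensored with the already-matched data at $q\neq\ell$, is equivariant for every generator of $\mathsf{h}^{(\ell)}_n$; under Assumption \ref{psi hyp} the operator $U_\ell$ vanishes on both sides (Remark \ref{remark ul=0}), so there is no further operator to reconcile at $\ell$, and strong multiplicity one ensures these generators determine the eigensystem. Summing over the finitely many contributing $\pi$ yields the asserted Hecke-equivariant isomorphism. The main obstacle is the local computation at $\ell$: one must read off, from the explicit lattice structure of $M(L_\ell,2r+1)$ inside the division algebra and the explicit (dihedral) construction of the matching supercuspidal of $GL_2(\q_\ell)$, that the $\widetilde{\varepsilon_\ell}$-eigenspace is exactly one-dimensional and is carried isomorphically onto the newvector line by local Jacquet--Langlands.
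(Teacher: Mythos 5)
First, a remark on what you are being compared against: the paper offers no proof of this statement at all --- it is quoted verbatim as Theorem 7.10 of \cite{HPS1989basis}, whose own argument proceeds by explicit local computations with the orders $M(L_\ell,m)$ inside the division algebra together with a trace-formula/basis-problem comparison, not by the global representation-theoretic route you take. Your reduction of the global statement to a local one is sound as far as it goes: the decomposition of $\mathscr{S}_k^{new}(U_1(R'),\widetilde{\varepsilon},\cc)$ over automorphic representations of $\widehat{B}^\times$, the parity argument showing that (for $r\geq 1$) a newform of $\ell$-conductor exponent $2r+1$ whose central character has $\ell$-conductor at most $\ell$ must be supercuspidal at $\ell$ --- hence lies in the image of Jacquet--Langlands --- and the matching away from $\ell$ via Casselman's theory of newvectors are all correct. (For $r=0$ the parity argument breaks down and one must also admit unramified twists of Steinberg; these are still square-integrable, so the bijection of representations survives, but you should say so.)

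The genuine gap is the step you yourself flag as ``the main obstacle'': the assertion that for $\pi'_\ell$ supercuspidal of conductor exponent $2r+1$, with local Jacquet--Langlands transfer $\pi_\ell$ to $B_\ell^\times$, one has $\dim\mathrm{Hom}_{M(L_\ell,2r+1)^\times}(\widetilde{\varepsilon_\ell},\pi_\ell)=1$. This is not a routine verification to be read off; it is precisely the local content of \cite{HPS1989basis}, and it is the point at which the dichotomy between odd and even exponent at $\ell$ --- multiplicity one here versus the multiplicity two of Theorem \ref{Thm 7.16-7} that drives the entire present paper --- gets decided. Nothing in your sketch distinguishes the odd case from the even case: you would need to actually analyze the restriction of $\pi_\ell$ to the unit groups of the orders $\mathcal{O}_{L_\ell}+P^{2r}$ (equivalently, the types of the dihedral supercuspidals attached to characters of $L_\ell^\times$ with $L_\ell$ unramified) to see why the answer is $1$ when $L_\ell$ is unramified and $2$ when it is ramified. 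As written, the proposal reduces the theorem to the statement that constitutes its substance, so it does not yet amount to a proof.
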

	The above theorem proves that, as in the case of Eichler orders, there is a one-to-one correspondence for special orders with odd exponent at $\ell$. The situation for even exponent is more complicated.
	\begin{theorem}[\cite{HPS1989basis}, Theorems 7.16 \& 7.17]\label{Thm 7.16-7}
		Let $\ell$ be an odd prime, and let $r\geq 1$ and $k\geq 2$ be integers. Let $\psi$ be a character modulo $Np^n\ell^{2r}$ such that $\psi(-1)=(-1)^k$, it satisfies Assumptions \ref{chi-R} and \ref{psi hyp}, and such that $cond_\ell(\psi)\leq 2r-1$. Then the following decomposition of $\mathsf{h}_n^{(Np^n\ell^{2r})}(\mathbb{C})$-modules holds true.
		\begin{enumerate}[label=(\alph*)]
			\item \label{Thm 7.16-7.a} If $r=1$ and $\psi_{\ell^{2}}$ is the trivial character:
			\begin{multline}
				2 S_k^{new}(\Gamma_1(Np^n\ell^{2}),\psi,\mathbb{C})
				\cong \mathscr{S}^{new}_k(U_n,\psi,\mathbb{C})\oplus S_k^{new}(\Gamma_1(Np^n\ell),\psi,\mathbb{C})^{\otimes\left(\frac{-}{\ell}\right)}\oplus\\
				2 S_k^{new}(\Gamma_1(Np^n),\psi,\mathbb{C})^{\otimes\left(\frac{-}{\ell}\right)} \oplus \bigoplus_{\chi /\sim}2 S_k^{new}(\Gamma_1(Np^n\ell),\chi^2\psi,\mathbb{C})^{\otimes\overline{\chi}}
			\end{multline}
			where the sum $\bigoplus_{\chi /\sim}$ runs over all the $\frac{1}{2}(\ell-3)$ classes of primitive characters modulo $\ell$ excepting $\left(\frac{-}{\ell}\right)$, modulo the equivalence $\chi\sim \overline{\chi}$.
			
			\item \label{Thm 7.16-7.b} If $r=1$ and $\psi_{\ell^{2}}$ is a odd character modulo $\ell$:
			\begin{equation}
				2 S_k^{new}(\Gamma_1(Np^n\ell^{2}),\psi,\mathbb{C}) \cong \mathscr{S}^{new}_k(U_n,\widetilde{\psi},\mathbb{C})\oplus \bigoplus_{\chi /\sim}2 S_k^{new}(\Gamma_1(Np^n\ell),\chi^2\psi,\mathbb{C})^{\otimes\overline{\chi}}
			\end{equation}
			where $\widetilde{\psi}$ is a lift of $\psi$ as in Section \ref{lift character} and the sum $\bigoplus_{\chi /\sim}$ runs over all the $\frac{1}{2}(\ell-3)$ classes of primitive characters modulo $\ell$ excepting $\overline{\psi_{\ell^{2}}}$, modulo the equivalence $\chi\sim \overline{\chi\psi_{\ell^{2}}}$.
			
			\item \label{Thm 7.16-7.c} If $r\geq 2$ and $\psi_{\ell^{2r}}$ is either trivial or odd of conductor $\ell$:
			\begin{equation}
				2 S_k^{new}(\Gamma_1(Np^n\ell^{2r}),\psi,\mathbb{C}) \cong \mathscr{S}^{new}_k(U_n,\widetilde{\psi},\mathbb{C})\oplus \bigoplus_{\chi}2 S_k^{new}(\Gamma_1(Np^n\ell^{r}),\chi^2\psi,\mathbb{C})^{\otimes\overline{\chi}}
			\end{equation}
			where $\widetilde{\psi}$ is a lift of $\psi$ as in Section \ref{lift character} and the sum $\bigoplus_{\chi}$ runs over all the $\ell^r-2\ell^{r-1}+\ell^{r-2}$ classes of primitive characters modulo $\ell^r$, modulo the equivalence $\chi\sim \overline{\chi\psi_{\ell^{2r}}}$.
		\end{enumerate}
	\end{theorem}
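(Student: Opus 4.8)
The plan is to treat the three cases \ref{Thm 7.16-7.a}--\ref{Thm 7.16-7.c} uniformly by reducing the asserted isomorphism of Hecke modules to a family of purely local identities at $\ell$, and then to import from local representation theory the one nonformal input: the appearance of the multiplicity $2$ for the special order. Each space occurring in the statement is a semisimple module over the Hecke algebra $\mathsf{h}_n^{(Np^n\ell^{2r})}(\cc)$ generated by the operators away from the level, so an isomorphism of such modules is equivalent to an equality of the multiplicities of every Hecke eigensystem on the two sides. By the Jacquet--Langlands correspondence and strong multiplicity one, each eigensystem that can occur is cut out by an irreducible cuspidal automorphic representation $\pi=\otimes_v\pi_v$ of $GL_2(\A_\q)$ with $\pi_\infty$ the weight-$k$ discrete series and with prescribed local behaviour, hence prescribed conductor and central character, at every place dividing $Np^n$. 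Since the orders agree away from $\ell$, these prime-to-$\ell$ data contribute identically on both sides, and the whole statement collapses to a comparison, for each admissible local representation $\pi_\ell$ of $GL_2(\q_\ell)$, of three local multiplicities: the dimension $a$ of $\Gamma_1(\ell^{2r})$-newvectors in $\pi_\ell$, the dimension $b$ of quaternionic newvectors in $\mathrm{JL}(\pi_\ell)$ for the special order, and the dimension $c$ recording the occurrences of $\pi_\ell$ as a character twist of a representation of smaller conductor. The theorem is the assertion $2a=b+2c$ for every such $\pi_\ell$.

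First I would analyse the classical side through Casselman's conductor theory: $\pi_\ell$ contributes ($a=1$) exactly when its conductor equals $\ell^{2r}$, the newvector being unique. Imposing that $\psi_{\ell^{2r}}$ has conductor at most $\ell$ (Assumption \ref{psi hyp}) is what excludes all twist-minimal \emph{principal series} of this conductor, since such a representation necessarily has one unramified and one conductor-$\ell^{2r}$ character and would force a central character of conductor $\ell^{2r}$. Consequently the conductor-$\ell^{2r}$ representations split cleanly into twist-minimal \emph{supercuspidals} on the one hand and, on the other, the twists $\sigma\otimes\chi$ of a representation $\sigma$ of conductor $\ell^{r}$ by a ramified character $\chi$ of conductor $\ell^{r}$. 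After globalisation the latter are exactly the forms populating $S_k^{new}(\Gamma_1(Np^n\ell^{r}),\chi^2\psi)^{\otimes\overline{\chi}}$: the character of $\sigma$ must be $\chi^2\psi$ so that twisting by $\overline{\chi}$ restores the central character $\psi$. The equivalence $\chi\sim\overline{\chi\psi_{\ell^{2r}}}$ precisely identifies the two characters yielding the same twist, while the special terms and restricted ranges of \ref{Thm 7.16-7.a}--\ref{Thm 7.16-7.b} (for $r=1$) encode the degenerate twist-classes—those by $\left(\frac{-}{\ell}\right)$ or by $\overline{\psi_{\ell^{2}}}$—for which $\sigma$ is unramified or Steinberg at $\ell$ and the multiplicity bookkeeping changes.

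The heart of the matter is the quaternionic side. Local Jacquet--Langlands sends a discrete-series $\pi_\ell$ to an irreducible representation $\rho_\ell=\mathrm{JL}(\pi_\ell)$ of $B_\ell^\times$, and $b$ is the dimension of the $\widetilde{\psi_{\ell^{2r}}}$-isotypic space of $\rho_\ell$ under $M(L_\ell,2r)^\times$, minus the part already accounted for by smaller special orders. For a twist-minimal supercuspidal I would compute
\[
\dim\operatorname{Hom}_{M(L_\ell,2r)^\times}\!\bigl(\widetilde{\psi_{\ell^{2r}}},\ \rho_\ell\bigr)=2
\]
directly: writing $M(L_\ell,2r)=\mathcal{O}_{L_\ell}+\mathfrak{P}^{\,2r-1}$ inside the maximal order of the division algebra $B_\ell$, realising $\rho_\ell$ as compactly induced from a character of an open subgroup containing $L_\ell^\times$, and filtering by the unit groups $1+\mathfrak{P}^{\,j}$, one sees that the isotypic space is two-dimensional—in contrast with the value $1$ returned by the same computation for an Eichler order. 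As a twist-minimal representation has no twist of smaller conductor, nothing is subtracted and $b=2$, matching $2a=2$, $c=0$; every other admissible $\pi_\ell$ already occurs at a smaller special order after untwisting, hence lies in the quaternionic old space with $b=0$, matching $2a=2c$. Summing these local identities—$(2a,b,2c)=(2,2,0)$ for the twist-minimal supercuspidals, which therefore account precisely for the summand $\mathscr{S}^{new}_k(U_n,\widetilde\psi)$, and $(2,0,2)$ for all others, accounting for the doubled twisted spaces—produces the stated isomorphism in each of the three cases. I expect this local dimension count to be by far the hardest step: it rests on the explicit type-theoretic description of the supercuspidals of $GL_2(\q_\ell)$ and on a careful analysis of how the chosen ramified extension $L_\ell$ embeds into $B_\ell$, and it is exactly the technical core of \cite{HPS1989basis}.

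An equivalent and historically original route avoids representation theory entirely and instead equates, for every Hecke operator prime to the level, its Eichler--Selberg trace on the two sides; there the factor $2$ emerges from the optimal-embedding numbers of $L_\ell$ into $M(L_\ell,2r)$, and the exceptional summands of cases \ref{Thm 7.16-7.a}--\ref{Thm 7.16-7.b} reappear as the special orbital contributions at $\ell$. Either way, the sole essential ingredient beyond Jacquet--Langlands and classical newform theory is the local multiplicity $2$ attached to the even-exponent special order, and that is where I would concentrate the work.
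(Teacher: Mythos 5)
This theorem is quoted from Hijikata--Pizer--Shemanske (\cite{HPS1989basis}, Theorems 7.16 and 7.17) and the paper supplies no proof of its own, so there is no internal argument to compare yours against; what follows is an assessment of your outline on its merits. Your reduction --- semisimplicity of the Hecke modules, strong multiplicity one, and Jacquet--Langlands to collapse the global statement to a comparison of local multiplicities at $\ell$ --- is sound, and you correctly isolate the one genuinely nonformal input: that for a twist-minimal supercuspidal $\pi_\ell$ the $\widetilde{\psi_{\ell^{2r}}}$-isotypic subspace of $\mathrm{JL}(\pi_\ell)$ under $M(L_\ell,2r)^\times$ is $2$-dimensional rather than $1$-dimensional. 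This is exactly the local phenomenon the paper records in Remark \ref{no-duality and Hecke ext}, with a pointer to Carayol, and it is the technical core of the HPS computation. The proof actually given in \cite{HPS1989basis} is the second route you sketch --- comparison of traces of Brandt matrices with the Eichler--Selberg trace formula, the factor $2$ arising from counts of optimal embeddings of $L_\ell$ into $M(L_\ell,2r)$ --- rather than the type-theoretic one you put first; both are legitimate, and yours has the advantage of explaining representation-theoretically \emph{why} the multiplicity is $2$.

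One place where your bookkeeping as literally stated does not reproduce the theorem: the uniform local identity $2a=b+2c$ with $(2a,b,2c)$ equal to $(2,2,0)$ or $(2,0,2)$ cannot account for the summand $S_k^{new}(\Gamma_1(Np^n\ell),\psi,\cc)^{\otimes\left(\frac{-}{\ell}\right)}$ in case \ref{Thm 7.16-7.a}, which occurs with coefficient $1$, not $2$. For those forms $\pi_\ell$ is a ramified quadratic twist of Steinberg, its Jacquet--Langlands transfer is the one-dimensional character $\left(\frac{-}{\ell}\right)\circ n$ of $B_\ell^\times$, and the correct local pattern is $b=1$ together with a single (not doubled) occurrence among the twisted classical spaces --- consistent with the multiplicity $1$ recorded in Proposition \ref{pointwise dim}. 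You do flag that ``the multiplicity bookkeeping changes'' for the degenerate twist-classes, but that change has to be carried out explicitly for the two sides of case \ref{Thm 7.16-7.a} to balance; as written they would not. The remaining contributions, including the non-discrete-series twists (where $b=0$ and the form does not transfer to $B$ at all) and cases \ref{Thm 7.16-7.b} and \ref{Thm 7.16-7.c}, are handled correctly by your scheme.
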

	\begin{remark}\label{no-duality and Hecke ext}
		\begin{enumerate}[label=(\alph*)]
			\item \label{hecke ext} In the above theorem the decomposition is given as $\mathsf{h}_n^{(Np^n\ell^{2r})}(\mathbb{C})$-modules, but strong multiplicity one for classical modular newforms guarantees the decomposition to hold (at least) as $\mathsf{h}_n^{(\ell)}(\mathbb{C})$-modules. As already noticed in Remark \ref{remark ul=0}, the Hecke algebra $\mathsf{h}_n^{(\ell)}(\mathbb{C})$ coincides with $\mathsf{h}_n(\mathbb{C})$ since the Hecke operator $U_\ell$ is the $0$-operator on this space.
			\item \label{no duality} The theorem implies that the duality between the Hecke algebra and the space of modular forms does not necessarily hold true for special orders with level $\ell^{2r}$. This situation represents the main difference between this setting and the case of classical modular forms (and special orders with an odd power of $\ell$). We recall that, on the contrary, the Jacquet--Langlands correspondence does hold true, as well as the multiplicity one result for automorphic representations. This phenomenon is purely local, as already remarked in Example 2.6 of \cite{LRdvP2018}. More precisely, the dimension of the local automorphic representation at $\ell$ is bigger than $1$ in the case of level $\ell^{2r}$ and determined by the minimal conductor of the modular forms. We refer to Section 5 of \cite{Carayol1984} for all the related details.
		\end{enumerate}
	\end{remark}
	We recall the definition of a twist-minimal modular form.
	\begin{definition}\label{twist min}
		A modular form is twist-miminal if $cond(\pi_{f,q})\geq cond(\pi_{f,q}\otimes \chi)$ for all $q$-adic characters $\chi$, where $\pi_f = \otimes'_q\pi_{f,q}$ is the automorphic representation attached to $f$ and for any automophic representation $\pi= \otimes'_q\pi_{q}$ we denote $cond(\pi_q)$ the conductor of $\pi_q$.
	\end{definition}
	\begin{corollary}\label{mult 2}
		Each twist-minimal modular eigenform in $S_k^{new}(\Gamma_1(Np^n\ell^{2r}),\psi,\mathbb{C})$ lifts to (up to linear combinations) exactly two linearly independent quaternionic modular eigenforms in $\mathscr{S}^{new}_k(U_n,\widetilde{\psi},\mathbb{C})$ with the same Hecke eigenvalues for $\mathsf{h}_n^{(\ell)}(\cc)$.
	\end{corollary}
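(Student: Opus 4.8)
The plan is to read the statement off from the decomposition of Theorem \ref{Thm 7.16-7} by passing to $f$-isotypic components. Fix the twist-minimal eigenform $f$ and let $\mathfrak{m}_f\subset \mathsf{h}_n^{(\ell)}(\cc)$ be the maximal ideal cut out by its system of eigenvalues for the Hecke and diamond operators away from $\ell$; recall from Remark \ref{remark ul=0} that, since $U_\ell=0$ on this space, $\mathsf{h}_n^{(\ell)}(\cc)$ is the full Hecke algebra acting on newforms, so this eigensystem determines $f$ up to isomorphism of automorphic representations. By Remark \ref{no-duality and Hecke ext}\ref{hecke ext} the isomorphism of Theorem \ref{Thm 7.16-7} is one of $\mathsf{h}_n^{(\ell)}(\cc)$-modules, so I may take $\mathfrak{m}_f$-eigenspaces on both sides. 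On the left-hand side the factor $2S_k^{new}(\Gamma_1(Np^n\ell^{2r}),\psi,\cc)$ contributes twice the $\mathfrak{m}_f$-eigenspace of $S_k^{new}(\Gamma_1(Np^n\ell^{2r}),\psi,\cc)$, which by multiplicity one for classical newforms is one-dimensional; hence the left-hand side localizes to a $2$-dimensional space.

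The crux is to show that every twisted summand on the right-hand side has trivial $\mathfrak{m}_f$-part, so that only $\mathscr{S}^{new}_k(U_n,\widetilde{\psi},\cc)$ survives. Suppose a newform $h$ lying in one of the summands $S_k^{new}(\Gamma_1(Np^n\ell^{r}),\chi^2\psi,\cc)^{\otimes\overline{\chi}}$ shared the eigensystem $\mathfrak{m}_f$. Then $h=g\otimes\overline{\chi}$ for a newform $g$ of level $Np^n\ell^{r}$, hence with $cond(\pi_{g,\ell})=\ell^{r}$, while matching all Hecke eigenvalues away from $\ell$ forces, by strong multiplicity one, the isomorphism $\pi_f\cong\pi_g\otimes\overline{\chi}$, that is $\pi_g\cong\pi_f\otimes\chi$. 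Taking $\ell$-conductors gives $cond(\pi_{f,\ell}\otimes\chi)=cond(\pi_{g,\ell})=\ell^{r}$. But $f$ is a newform of level $Np^n\ell^{2r}$, so $cond(\pi_{f,\ell})=\ell^{2r}$, and its twist-minimality at $\ell$ (Definition \ref{twist min}) means precisely that its $\ell$-conductor is minimal among all its twists, i.e. $cond(\pi_{f,\ell})\leq cond(\pi_{f,\ell}\otimes\chi)$. Combining, $\ell^{2r}\leq\ell^{r}$, which is impossible for $r\geq 1$. The identical computation rules out the extra summands appearing for $r=1$ in cases \ref{Thm 7.16-7.a} and \ref{Thm 7.16-7.b}: there the underlying newform has $\ell$-level $\ell$ or $1$ and the twisting character is $\left(\frac{-}{\ell}\right)$, so the same inequality yields $\ell^{2}\leq\ell$ or $\ell^{2}\leq 1$, again absurd.

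Putting the two steps together, taking $\mathfrak{m}_f$-eigenspaces in the isomorphism of Theorem \ref{Thm 7.16-7} identifies the $2$-dimensional space on the left with the $\mathfrak{m}_f$-eigenspace of $\mathscr{S}^{new}_k(U_n,\widetilde{\psi},\cc)$ alone. Hence this quaternionic $f$-eigenspace is exactly $2$-dimensional and carries the $\mathsf{h}_n^{(\ell)}(\cc)$-eigensystem of $f$; any basis of it provides two linearly independent quaternionic newform lifts of $f$, determined only up to linear combination, which is the assertion.

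I expect the only delicate point to be the second step: one must verify that the twist-minimality hypothesis applies uniformly across all three cases of Theorem \ref{Thm 7.16-7}, checking in particular that in every twisted summand the underlying newform has $\ell$-conductor strictly below $\ell^{2r}$, so that the twist-minimality inequality is strict and produces a genuine contradiction. Everything else is a formal manipulation of isotypic components, and the transfer of strong multiplicity one is what guarantees that equality of away-from-$\ell$ eigensystems upgrades to an isomorphism of automorphic representations.
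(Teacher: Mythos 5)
Your argument is correct and is essentially the paper's own: the corollary is left without an explicit proof there, but it is treated exactly as you do it — take $\mathfrak{m}_f$-isotypic components in the $\mathsf{h}_n^{(\ell)}(\cc)$-equivariant decomposition of Theorem \ref{Thm 7.16-7}, use classical multiplicity one to get dimension $2$ on the left, and use strong multiplicity one plus the conductor comparison at $\ell$ to kill every twisted summand when $f$ is twist-minimal (compare the one-line proof of Proposition \ref{pointwise dim}). The only cosmetic remark is that you correctly read Definition \ref{twist min} as $cond(\pi_{f,\ell})\leq cond(\pi_{f,\ell}\otimes\chi)$ — the inequality as printed in the paper is reversed by an evident typo — and your final worry about strictness is unnecessary, since $\ell^{2r}\leq\ell^{r}$ is already absurd for $r\geq 1$.
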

	
	Regardless of Remark \ref{no-duality and Hecke ext}.\ref{no duality}, one can still obtain an isomorphism between the space of quaternionic modular forms and the square of a suitable Hecke algebra, as in the following proposition.
	\begin{proposition}\label{square iso hecke-mf}
		Under the hypotheses of Theorem \ref{Thm 7.16-7}, there exists a $\mathbb{C}$-vector subspace $T_k(n,r,\psi)$ of $S_k^{new}(\Gamma_1(Np^n\ell^{2r}),\psi,\mathbb{C})$, which is a $\mathsf{h}_n^{(\ell)}(\cc)$-submodule satisfying
		\begin{equation}
			2T_k(n,r,\psi)\cong 
			\begin{cases}
				\mathscr{S}^{new}_k(U_n,\psi,\mathbb{C})\oplus S_k^{new}(\Gamma_1(Np^n\ell),\psi,\mathbb{C})^{\otimes\left(\frac{-}{\ell}\right)} & \textrm{ if $r=1$ and $\psi_\ell$ is trivial,}\\
				\mathscr{S}^{new}_k(U_n,\psi,\mathbb{C}) & \textrm{ otherwise.}
			\end{cases}
		\end{equation}
		Moreover, for $\mathsf{h}_n^{T}(\mathbb{C})$ the Hecke-subalgebra of $\mathsf{h}_n(\mathbb{C})$ acting on $T_k(n,r,\psi)$, we have an isomorphism of $\mathsf{h}_n^{T}(\mathbb{C})=\mathsf{h}_n^{T,(\ell)}(\mathbb{C})$-modules,
		\begin{equation}
			\left(\mathsf{h}_n^{T}(\mathbb{C})\right)^2\cong
			\begin{cases}
				\mathscr{S}^{new}_k(U_n,\psi,\mathbb{C})\oplus S_k^{new}(\Gamma_1(Np^n\ell),\psi,\mathbb{C})^{\otimes\left(\frac{-}{\ell}\right)} & \textrm{ if $r=1$ and $\psi_\ell$ is trivial,}\\
				\mathscr{S}^{new}_k(U_n,\psi,\mathbb{C}) & \textrm{ otherwise.}
			\end{cases}
		\end{equation}
	\end{proposition}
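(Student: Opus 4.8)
The plan is to produce $T_k(n,r,\psi)$ by splitting off, inside $S_k^{new}(\Gamma_1(Np^n\ell^{2r}),\psi,\mathbb{C})$, exactly those newforms that occur with multiplicity $2$ in Theorem \ref{Thm 7.16-7} and can therefore be cancelled against the factor $2$ on its left-hand side. Let $V\subseteq S_k^{new}(\Gamma_1(Np^n\ell^{2r}),\psi,\mathbb{C})$ be the sum of the twisted newform subspaces $S_k^{new}(\Gamma_1(Np^n\ell^{r}),\chi^2\psi,\mathbb{C})^{\otimes\overline{\chi}}$ appearing there---supplemented, only in case \ref{Thm 7.16-7.a}, by $S_k^{new}(\Gamma_1(Np^n),\psi,\mathbb{C})^{\otimes\left(\frac{-}{\ell}\right)}$, but \emph{not} by the multiplicity-one term $S_k^{new}(\Gamma_1(Np^n\ell),\psi,\mathbb{C})^{\otimes\left(\frac{-}{\ell}\right)}$. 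Since $g\otimes\overline{\chi}$ has nebentype $(\chi^2\psi)\overline{\chi}^2=\psi$, each such summand is, by \cite{HPS1989basis}, a newform subspace of $S_k^{new}(\Gamma_1(Np^n\ell^{2r}),\psi,\mathbb{C})$ whose members are \emph{not} twist-minimal at $\ell$; as twisting commutes with the Hecke action away from $\ell$, $V$ is an $\mathsf{h}_n^{(\ell)}(\mathbb{C})$-stable subspace. I then set $T_k(n,r,\psi)$ to be the Petersson-orthogonal complement of $V$, so that $S_k^{new}(\Gamma_1(Np^n\ell^{2r}),\psi,\mathbb{C})=T_k(n,r,\psi)\oplus V$; the Hecke-equivariance of the Petersson product makes $T_k(n,r,\psi)$ an $\mathsf{h}_n^{(\ell)}(\mathbb{C})$-submodule.

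Doubling this splitting gives $2S_k^{new}(\Gamma_1(Np^n\ell^{2r}),\psi,\mathbb{C})\cong 2T_k(n,r,\psi)\oplus 2V$. On the other hand, Theorem \ref{Thm 7.16-7} presents the same space, up to the factor $2$, as $\mathscr{S}^{new}_k(U_n,\psi,\mathbb{C})$ (plus, in case \ref{Thm 7.16-7.a}, the multiplicity-one summand $S_k^{new}(\Gamma_1(Np^n\ell),\psi,\mathbb{C})^{\otimes\left(\frac{-}{\ell}\right)}$) together with two copies of precisely the twisted subspaces constituting $V$. Identifying $V$ with those abstract summands and cancelling the common part $2V$---which is legitimate because every module in sight is finite-dimensional and hence semisimple over $\mathbb{C}$---leaves $2T_k(n,r,\psi)\cong\mathscr{S}^{new}_k(U_n,\psi,\mathbb{C})$ in cases \ref{Thm 7.16-7.b} and \ref{Thm 7.16-7.c}, and $2T_k(n,r,\psi)\cong\mathscr{S}^{new}_k(U_n,\psi,\mathbb{C})\oplus S_k^{new}(\Gamma_1(Np^n\ell),\psi,\mathbb{C})^{\otimes\left(\frac{-}{\ell}\right)}$ in case \ref{Thm 7.16-7.a}. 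This is the first asserted isomorphism.

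For the second isomorphism I would invoke the self-duality of newform spaces. As $T_k(n,r,\psi)$ is a Hecke-stable subspace of newforms of level $Np^n\ell^{2r}$, strong multiplicity one ensures that its distinct eigenform constituents already have pairwise distinct eigenvalue systems for the operators away from $\ell$; moreover $U_\ell$ acts as the zero operator by Remark \ref{remark ul=0}, so that $\mathsf{h}_n^{T}(\mathbb{C})=\mathsf{h}_n^{T,(\ell)}(\mathbb{C})$. Writing $T_k(n,r,\psi)=\bigoplus_f\mathbb{C}f$ over these eigenforms, both $T_k(n,r,\psi)$ and $\mathsf{h}_n^{T}(\mathbb{C})$ are isomorphic, as $\mathsf{h}_n^{T}(\mathbb{C})$-modules, to $\bigoplus_f\mathbb{C}$, whence $\mathsf{h}_n^{T}(\mathbb{C})\cong T_k(n,r,\psi)$. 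Squaring and composing with the first isomorphism yields $(\mathsf{h}_n^{T}(\mathbb{C}))^2\cong 2T_k(n,r,\psi)\cong$ the required right-hand side.

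The step I expect to be delicate is the bookkeeping in case \ref{Thm 7.16-7.a}: one must confirm, from the local analysis of \cite{HPS1989basis}, that $S_k^{new}(\Gamma_1(Np^n\ell),\psi,\mathbb{C})^{\otimes\left(\frac{-}{\ell}\right)}$ really occurs with multiplicity exactly $1$---this is precisely what forbids its inclusion in the cancellable subspace $V$ and forces it into $T_k(n,1,\psi)$, and it encodes the anomalous behaviour of the quadratic twist by $\left(\frac{-}{\ell}\right)$ at the ramified prime. A secondary, more routine point is to record that the Petersson-orthogonal complement of a Hecke-stable subspace is again Hecke-stable, which follows from the adjointness relations of the Hecke operators away from the level.
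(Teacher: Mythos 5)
Your proposal is correct and follows essentially the same route as the paper: the paper's own proof simply cites Theorem \ref{Thm 7.16-7} together with Chapter 8 of \cite{HPS1989basis} for the existence of $T_k(n,r,\psi)$ (your explicit construction of $T_k(n,r,\psi)$ as the Petersson complement of the non-cancellable twisted subspaces, followed by cancellation of $2V$ by semisimplicity, is precisely the argument being delegated to that reference), and for the second isomorphism the paper invokes classical Hecke-duality via $Hom(2T_k(n,r,\psi),\mathbb{C})\cong Hom(T_k(n,r,\psi),\mathbb{C})^2\cong 2T_k(n,r,\psi)$, which is the same multiplicity-one/duality content as your eigenform-basis identification $\mathsf{h}_n^{T}(\mathbb{C})\cong T_k(n,r,\psi)$.
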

	\begin{proof}
		The first statement follows directly from Theorem \ref{Thm 7.16-7} as noticed in Chapter 8 of \cite{HPS1989basis}. The second part follows from $	Hom\left(2T_k(n,r,\psi),\mathbb{C}\right)\cong Hom\left(T_k(n,r,\psi),\mathbb{C}\right)^2\cong 2T_k(n,r,\psi),$
		where the first isomorphism is due to the properties of $Hom(-,\mathbb{C})$ and the second is the Hecke-duality for classical modular forms restricted to $T_k(n,r,\psi)$ (since the decomposition is Hecke-equivariant away from $\ell$).
	\end{proof}
	As in Section \ref{Hecke algebras}, taken $A$ an $\mathcal{O}$-algebra, we define $\mathsf{h}^T_{\infty}(A)=\varprojlim \mathsf{h}_{n}^T(A)$ and $\mathsf{h}^{T,ord}_{\infty}(A)=\varprojlim \mathsf{h}_{n}^{T,ord}(A)$. We obtain injective homomorphisms $\mathsf{h}^T_{\infty}(A)\hookrightarrow \mathsf{h}_{\infty}(A)$ and $\mathsf{h}^{T,ord}_{\infty}(A)\hookrightarrow \mathsf{h}^{ord}_{\infty}(A)$.
	\begin{definition}
		For any module $M$ with an action of a suitable Hecke algebra, and any classical eigenform $g$, we denote by $M[g]$ the $g$-isotypic component of $M$, \emph{i.e.} the biggest submodule of $M$ on which the Hecke algebra acts with the same eigenvalues of $g$.
	\end{definition}
	\begin{proposition}\label{pointwise dim}
		Let $g$ be a newform in $S_k^{new}(\Gamma_1(Np^n\ell^{2r}),\psi,F)$ with $k\geq 2$ and $\psi(-1)=(-1)^{k}$. Write $\psi=\psi_{Np^n}\psi_{\ell^{2r}}$ for $\psi_{Np^n}$ and $\psi_{\ell^{2r}}$ the component of $\psi$, respectively, modulo $Np^n$ and $\ell^{2r}$.
		\begin{enumerate}[label=(\alph*)]
			\item If $r=1$ and $\psi_{\ell^{2}}$ is the trivial character modulo $\ell$,
			\begin{multline}
				\dim_{F}\left(\mathscr{S}^{new}_k(U_n,\widetilde{\psi},F)[g]\right)=\\
				\dim_{F}\left(\mathscr{S}_k(U_n,\widetilde{\psi},F)[g]\right)=
				\begin{cases}
					2 & \textrm{ if $g$ is twist-minimal at $\ell$},\\
					1 & \textrm{ if } g\in S_k^{new}(\Gamma_1(Np^n\ell),\psi,F)^{\otimes\left(\frac{-}{\ell}\right)},\\
					0 & \textrm{ otherwise.}
				\end{cases}
			\end{multline}
			\item If either $\psi_{\ell^{2r}}$ is a non-trivial character modulo $\ell$ or $r\geq 2$,
			\begin{equation}
				\dim_{F}\left(\mathscr{S}^{new}_k(U_n,\widetilde{\psi},F)[g]\right)
				=\dim_{F}\left(\mathscr{S}_k(U_n,\widetilde{\psi},F)[g]\right)=  \begin{cases}
					2 & \textrm{ if $g$ is twist-minimal at $\ell$},\\
					0 & \textrm{ otherwise.}
				\end{cases}
			\end{equation}
		\end{enumerate}
	\end{proposition}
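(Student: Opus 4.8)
The plan is to apply the isotypic-component functor $(-)[g]$ to both sides of the Hijikata--Pizer--Shemanske decomposition of Theorem \ref{Thm 7.16-7} and to read off the dimension of the quaternionic summand. By Remark \ref{no-duality and Hecke ext}\ref{hecke ext}, each such decomposition is an isomorphism of $\mathsf{h}_n^{(\ell)}$-modules, and since it is already defined over $\q(\widetilde{\psi})$ it base-changes to $F$; hence $(-)[g]$ may be applied termwise. The left-hand term is computed first: as $g$ is a newform of level $Np^n\ell^{2r}$ with character $\psi$, strong multiplicity one (using that, by Assumption \ref{psi hyp} and Remark \ref{remark ul=0}, the $\ell$-datum of such newforms is rigid) forces $S_k^{new}(\Gamma_1(Np^n\ell^{2r}),\psi,F)[g]$ to be the line $F\cdot g$, so the left-hand side $2\,S_k^{new}(\Gamma_1(Np^n\ell^{2r}),\psi,F)[g]$ is $2$-dimensional over $F$.

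Next I would determine the $[g]$-component of each of the remaining (Eichler-type, strictly smaller $\ell$-conductor) twisted summands. A form inside $S_k^{new}(\Gamma_1(Np^n\ell^{r}),\chi^2\psi,F)^{\otimes\overline{\chi}}$ has the shape $h\otimes\overline{\chi}$ with $h$ a newform of level $Np^n\ell^{r}$ (and similarly with $\ell$ or $1$ in place of $\ell^r$ in case \ref{Thm 7.16-7.a}). Comparing Hecke eigenvalues at every prime $q\neq\ell$, such a form lies in $[g]$ exactly when $h$ agrees with $g\otimes\chi$ away from $\ell$, that is, when the newform attached to $g\otimes\chi$ has $\ell$-conductor dividing the $\ell$-part of the level of $h$. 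Thus a twisted summand has nonzero $[g]$-component precisely when twisting by the corresponding character strictly lowers the $\ell$-conductor of $g$. By Definition \ref{twist min}, $g$ being twist-minimal at $\ell$ means that its $\ell$-conductor cannot be lowered in this way, i.e. $cond_\ell(g\otimes\chi)\geq cond_\ell(g)=\ell^{2r}$ for every character $\chi$; equivalently, when $g$ is not twist-minimal it is itself captured by exactly one such summand.

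Putting these together I would argue case by case. When $g$ is twist-minimal at $\ell$, every twisted summand (and, in case \ref{Thm 7.16-7.a}, every $\otimes\left(\frac{-}{\ell}\right)$ summand, whose $\ell$-conductor requirement $\leq\ell$ is $<\ell^{2r}$) has vanishing $[g]$-component, so the entire $2$-dimensional left-hand side is carried by $\mathscr{S}^{new}_k(U_n,\widetilde{\psi},F)[g]$, giving dimension $2$. In case \ref{Thm 7.16-7.a}, if instead $g\in S_k^{new}(\Gamma_1(Np^n\ell),\psi,F)^{\otimes\left(\frac{-}{\ell}\right)}$, then $g\otimes\left(\frac{-}{\ell}\right)$ has $\ell$-conductor exactly $\ell$, so the single (undoubled) summand $S_k^{new}(\Gamma_1(Np^n\ell),\psi,F)^{\otimes\left(\frac{-}{\ell}\right)}[g]$ is one-dimensional while the level-$Np^n$ and the remaining $\chi$-summands vanish by the same conductor computation; subtracting from $2$ yields $\dim_F \mathscr{S}^{new}_k(U_n,\widetilde{\psi},F)[g]=1$. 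In every remaining (non twist-minimal) case $g$ is captured by a doubled summand whose $[g]$-component already fills the $2$-dimensional left-hand side, forcing the quaternionic component to be $0$. Finally, the equality $\dim_F \mathscr{S}^{new}_k(U_n,\widetilde{\psi},F)[g]=\dim_F \mathscr{S}_k(U_n,\widetilde{\psi},F)[g]$ follows because $g$ is a newform: any vector in the $[g]$-component of the quaternionic old space would, by strong multiplicity one, arise from an eigensystem of strictly smaller level away from $\ell$, contradicting that $g$ is new at $Np^n$ with $\ell$-conductor $\ell^{2r}$, so the old part contributes nothing to $[g]$.

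The step I expect to be the main obstacle is the conductor-of-twist bookkeeping at $\ell$: deciding, for each summand appearing in Theorem \ref{Thm 7.16-7}, exactly which twists $g\otimes\chi$ drop the $\ell$-conductor to the level dictated by that summand. This rests on the local representation theory of $\pi_{g,\ell}$ (Steinberg versus ramified principal series versus supercuspidal) and on the behaviour of conductors under twisting by the ramified characters indexing the decomposition, which is precisely the local input behind Remark \ref{no-duality and Hecke ext}\ref{no duality} and the analysis of \cite{HPS1989basis} and \cite{Carayol1984}.
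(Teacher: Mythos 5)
Your proposal is correct and follows the same route as the paper: the paper's proof is precisely ``apply the decomposition of Theorem \ref{Thm 7.16-7} together with strong multiplicity one for $g$,'' and your argument is a careful expansion of that one-line proof, with the case-by-case conductor-of-twist bookkeeping (which the paper leaves implicit, deferring to \cite{HPS1989basis} and \cite{Carayol1984}) spelled out. Note also that the ``exactly one capturing summand'' step you flag as the main obstacle is in fact forced by the dimension count itself once one knows a non-twist-minimal $g$ is captured by at least one doubled summand.
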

	\begin{proof}
		This is a straightforward consequence of Theorem \ref{Thm 7.16-7} combined with the fact that strong multiplicity one applies to $g$.
	\end{proof}
	
	\subsection{Choice of a modular form}\label{choice of modular form}
	Let $f\in S_2(\Gamma_1(Np^n\ell^{2r}),\psi,\mathbb{C})$ be a fixed $p$-ordinary newform, for $n\geq 1$ and with $\psi$ a Dirichlet character modulo $Np^n\ell^{2r}$ satisfying Assumptions \ref{chi-R} and \ref{psi hyp}. In this way, the automorphic representation associated with $f$ admits a Jacquet--Langlands lift. Moreover, we assume that the $p$-adic Galois representation associated with $f$ is residually absolutely irreducible and $p$-distinguished. Let $F=\qp(f)$ be the finite extension of $\qp$ defined by $f$ and take $\mathcal{O}$ to be its ring of integers; note that $\mathcal{O}$ is a finite flat extension of $\zp$. We denote by $f_\infty$ the unique Hida family passing through $f$. By duality with the ordinary Hecke algebra, we know that $f_\infty$ defines a character, which we denote with the same symbol $f_\infty$,
	\begin{equation}
		f_\infty: \mathsf{H}^{ord}_\infty(F)\longrightarrow \mathcal{R},
	\end{equation}
	where $\mathcal{R}$ is the \emph{universal ordinary $p$-adic Hecke algebra of tame level $N\ell^{2r}$} as in Definition 2.4 of \cite{GreenbergStevens1993}. The Jacquet--Langlands correspondence ensures that such character factors through the morphism to $\mathsf{H}^{B}_\infty(F)$; we keep denoting the corresponding map by $f_\infty: \mathsf{H}^{B,ord}_\infty(F)\longrightarrow \mathcal{R}$.
	
	\section{The control theorem}
	
	In this last section, we prove a \emph{control theorem} for special orders of even conductor at $\ell$. We introduce a space that is suitable for the $p$-adic interpolation and we define some specialization maps. We consider again $\mathcal{O}$ to be the ring of integers of a fixed finite extension of $\qp$ and we take an $\mathcal{O}$-algebra which we denote again by $A$.
	
	\subsection{Specialization maps}\label{specialization maps}
	Let $\mathsf{X}=(\zp\times \zp)^{prim}$ be the set of primitive row vectors, that is, the vectors in $\zp^2$ which have at least one component not divisible by $p$. Denote by $\mathscr{C}(\mathsf{X},A)$ the space of $A$-valued continuous functions on $\mathsf{X}$ and by $\mathcal{M}(\mathsf{X},A)$ the space of $A$-valued measures on $\mathsf{X}$. We have a left $M_2(\zp)$-action on $\mathscr{C}(\mathsf{X},A)$ via
	\begin{equation}
		\gamma\cdot f(x,y)=f\left((x,y)\gamma\right),
	\end{equation}
	for $f\in\mathscr{C}(\mathsf{X},A)$ and $\gamma\in M_2(\zp)$, and the induced right action on $\mathcal{M}(\mathsf{X},A)$ as
	\begin{equation}
		\mu|_{\gamma}(f(x,y))=\mu(\gamma\cdot f(x,y)),
	\end{equation}
	for $\mu \in \mathcal{M}(\mathsf{X},A)$. Considering the action by $U_n$, with $n\geq 1$, we can notice that the subspace $p\zp\times\zp^\times \subset \mathsf{X}$ satisfies 
	\begin{equation}
		(p\zp\times\zp^\times) \cdot (U_n)_p= (p\zp\times\zp^\times) \mat{\zp^\times}{\zp}{p^n\zp}{1+p^n\zp}=p\zp\times\zp^\times.
	\end{equation}
	\begin{definition}\label{measure valued quat forms}
		Let $\psi$ be a Dirichlet character modulo $N\ell^{2r}$ satisfying Assumptions \ref{chi-R} and \ref{psi hyp}. We define the measure-valued quaternionic modular forms with character $\psi$ as the space
		\begin{multline}
			S_2(U_0,\psi, \mathcal{M}(\mathsf{X},A)):=\Big\{\varphi:\widehat{B}^\times\longrightarrow \mathcal{M}(\mathsf{X},A)\mid
			\varphi(b\tilde{b}uz)=\psi_{N,\A}^{-1}(z)\psi_{\ell^{2r},\A}^{-1}(z)\widetilde{\psi_{\ell^{2r}}}(u_\ell)\varphi(\tilde{b})|_{u_p},\\
			\text{ for } b\in B^\times,\ph{.}\tilde{b}\in \widehat{B}^\times,\ph{.} u\in U_0,\ph{.}z\in \A_{\q,f}^\times \Big\}.
		\end{multline}
	\end{definition}
	\noindent This space can be identified with the space of functions $\varphi:B^\times\backslash \widehat{B}^\times\longrightarrow \mathcal{M}(\mathsf{X},A)$ satisfying
	\begin{equation}
		\varphi(z\tilde{b})=\psi_{N,\A}^{-1}(z)\psi_{\ell^{2r},\A}^{-1}(z)\varphi(\tilde{b}),
	\end{equation}
	for $\tilde{b}\in B^\times\backslash\widehat{B}^\times$ and $z\in \A_{\q,f}^\times$, and such that $\varphi|_{u_p^{-1}}(\tilde{b}u)=\widetilde{\psi_{\ell^{2r}}}(u_\ell)\varphi(\tilde{b})$ for any $u\in U_0$ and any $\tilde{b}\in B^\times\backslash\widehat{B}^\times$.\\
	
	Take $k\geq 2$ and let $\varepsilon:\zp^\times \longrightarrow A^\times$ be any character which factors through $(\zp/p^m\zp)^\times$. We extend $\varepsilon$ multiplicatively to $\zp$ imposing $\varepsilon(p)=0$. We define the specialization map
	\begin{equation}
		\nu_{k,\varepsilon}: S_2(U_0,\psi, \mathcal{M}(\mathsf{X},A))\longrightarrow S_k(U_n,\psi\varepsilon, A(\varepsilon))
	\end{equation}
	such that
	\begin{equation}
		\nu_{k,\varepsilon}(\varphi)(\tilde{b})(P):=\int_{p\zp\times \zp^\times}\varepsilon(y)P(x,y)d(\varphi(\tilde{b}))(x,y),
	\end{equation}
	where $n=\max\{1,m\}$, $\varphi\in S_2(U_0,\psi, \mathcal{M}(\mathsf{X},A))$, $\tilde{b}\in B^\times\backslash\widehat{B}^\times$ and $P\in L_{k-2}(A)$.
	\begin{proposition}\label{specialization maps prop}
		The specialization maps $\nu_{k,\varepsilon}$ are well-defined and Hecke-equivariant for $ \mathsf{H}^{B}_{univ}$, where the equivariance at $p$ is meant as $\nu_{k,\varepsilon}(T_p\varphi)=U_p\nu_{k,\varepsilon}(\varphi)$.
	\end{proposition}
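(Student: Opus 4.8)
The plan is to verify the two assertions of Proposition \ref{specialization maps prop} separately: first that $\nu_{k,\varepsilon}(\varphi)$ genuinely lands in $S_k(U_n,\psi\varepsilon,A(\varepsilon))$ for every $\varphi$ in the source, and second that the map intertwines the Hecke actions on the two spaces. For the well-definedness, I would fix $\varphi\in S_2(U_0,\psi,\mathcal{M}(\mathsf{X},A))$ and check directly that the function $\tilde{b}\mapsto \nu_{k,\varepsilon}(\varphi)(\tilde{b})$ satisfies the defining transformation law of Definition \ref{quaternionic mf k def} at weight $k$ and character $\psi\varepsilon$. Concretely, one computes $\nu_{k,\varepsilon}(\varphi)(b\tilde{b}uz)(P)$ by unwinding the integral definition, substituting the transformation rule $\varphi(b\tilde{b}uz)=\psi_{N,\A}^{-1}(z)\psi_{\ell^{2r},\A}^{-1}(z)\widetilde{\psi_{\ell^{2r}}}(u_\ell)\varphi(\tilde{b})|_{u_p}$ inside the integral, and tracking how the factor $z_p^{k-2}$, the new character value $\varepsilon_{\A}(z)$, and the polynomial action $|_{u_p}$ emerge. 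The key local input is the invariance $(p\zp\times\zp^\times)\cdot(U_n)_p=p\zp\times\zp^\times$ recorded just before Definition \ref{measure valued quat forms}, which guarantees that integrating over $p\zp\times\zp^\times$ is compatible with the right action of $(U_n)_p$ and so produces no boundary contribution; one must also confirm that the specialized weight-$k$ character is exactly $\psi\varepsilon$, which amounts to matching the adelic characters $\psi_{Np^n,\A}$ and $\psi_{\ell^{2r},\A}$ against $\psi_{N,\A}$ twisted by the finite-order character $\varepsilon$.

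For the Hecke-equivariance I would treat the operators place-by-place. Away from $p$ and $\ell$, the operators $T_q$ and $U_q$ on both spaces are defined by the same coset sums $\tilde{b}\mapsto \tilde{b}\mat{1}{0}{0}{\varpi_q}$ and $\tilde{b}\mapsto\tilde{b}\mat{\varpi_q}{a}{0}{1}$ acting purely on the $\widehat{B}^\times$-argument, with no polynomial or measure twist at $p$; since $\nu_{k,\varepsilon}$ only alters the coefficient module and leaves the $\widehat{B}^\times$-variable untouched, equivariance for these operators is immediate from linearity of the integral. The diamond operators and $\tilde{U}_\ell$ are handled identically, the latter because $\tilde{\varpi}_\ell$ acts only in the $\widehat{B}^\times$-slot. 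The genuinely substantive case, which I expect to be the main obstacle, is the comparison at $p$: the source carries the operator $T_p$ built from $\varphi|_{\mat{1}{0}{0}{p}}(\tilde{b}\mat{1}{0}{0}{\varpi_p})+\sum_a\varphi|_{\mat{p}{a}{0}{1}}(\tilde{b}\mat{\varpi_p}{a}{0}{1})$, while the target (with $n\geq 1$) carries $U_p=\sum_a\varphi|_{\mat{p}{a}{0}{1}}(\tilde{b}\mat{\varpi_p}{a}{0}{1})$.

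The heart of the matter is thus to show that, after integrating against $\varepsilon(y)P(x,y)$ over $p\zp\times\zp^\times$, the first summand of $T_p$ dies and the remaining sum reproduces $U_p$ of the specialization. I would expand $\nu_{k,\varepsilon}(T_p\varphi)(\tilde{b})(P)$ and push the matrix actions $|_{\mat{1}{0}{0}{p}}$ and $|_{\mat{p}{a}{0}{1}}$ through the integral using the adjoint relation between the $M_2(\zp)$-action on functions in $\mathscr{C}(\mathsf{X},A)$ and the dual action on measures, i.e. $\int g\, d(\mu|_{\gamma})=\int (\gamma\cdot g)\,d\mu$. The term coming from $\mat{1}{0}{0}{p}$ transforms the integrand $\varepsilon(y)P(x,y)$ into a function supported on the locus where the second coordinate becomes divisible by $p$; the convention $\varepsilon(p)=0$, together with the fact that $\varepsilon(y)$ vanishes once $y\in p\zp$, forces this contribution to integrate to zero over $p\zp\times\zp^\times$. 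For the surviving sum indexed by $a$, one checks that $\mat{p}{a}{0}{1}$ preserves the region $p\zp\times\zp^\times$ and that the change of variables it induces matches exactly the definition of $U_p$ on $S_k(U_n,\psi\varepsilon,A(\varepsilon))$, so that $\nu_{k,\varepsilon}(T_p\varphi)=U_p\,\nu_{k,\varepsilon}(\varphi)$ as claimed. The delicate bookkeeping here is keeping the vanishing-at-$p$ convention for $\varepsilon$ consistent with the primitivity constraint defining $\mathsf{X}$, and verifying that the polynomial $P\in L_{k-2}(A)$ is correctly pulled back under each matrix; once these are pinned down the identity follows by linearity.
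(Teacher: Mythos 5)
Your proposal is correct and follows essentially the same route as the paper: direct verification of the weight-$k$, character-$\psi\varepsilon$ transformation law by pushing the matrix action through the integral and using the stability of $p\zp\times\zp^\times$ under $(U_n)_p$ (equivalently $(x,y)u_p^{-1}=(*,y+p*)$), trivial equivariance away from $p$, and the vanishing of the $\left(\begin{smallmatrix}1&0\\0&p\end{smallmatrix}\right)$-term of $T_p$ via the convention $\varepsilon(p)=0$ together with the support condition on $p\zp\times\zp^\times$. No gaps; the paper's proof carries out exactly the computations you outline.
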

	\begin{proof}
		Let $\varphi\in S_2(U_0,\psi, \mathcal{M}(\mathsf{X},A))$. Then, for any $\tilde{b}\in B^\times\backslash\widehat{B}^\times$, $z\in \A_{\q,f}^\times$, $u\in U_n$ and $P\in L_{k-2}(A)$, we have
		\begin{multline}
			\nu_{k,\varepsilon}(\varphi)(\tilde{b}uz)|_{u_p^{-1}}(P) =\int_{p\zp\times\zp^\times}\varepsilon(y)(u_p^{-1}\cdot P(x,y))d(\varphi(\tilde{b}uz))(x,y) \\
			=\psi_{N,\A}^{-1}(z)\psi_{\ell^{2r},\A}^{-1}(z)\widetilde{\psi_{\ell^{2r}}}(u_\ell)
			\cdot\int_{p\zp\times\zp^\times}(u_pz_p)\cdot \left(\varepsilon(y)P((x,y)u_p^{-1})\right)d(\varphi(\tilde{b}))(x,y)
		\end{multline}
		and, since $(x,y)u_p^{-1}=(*,y+p*)$, $\varepsilon$ is extended to $\zp$ and $P((x,y)z_p)=z_p^{k-2}P(x,y)$, we obtain
		\begin{multline}
			\psi_{N,\A}^{-1}(z)\psi_{\ell^{2r},\A}^{-1}(z)\widetilde{\psi_{\ell^{2r}}}(u_\ell)\int_{p\zp\times\zp^\times}\varepsilon(y)\varepsilon(z_p)\left(P((x,y)u_p^{-1}u_pz_p)\right)d(\varphi(\tilde{b}))(x,y)\\
			=\psi_{N,\A}^{-1}(z)\varepsilon_{\A}(z)^{-1}\psi_{\ell^{2r},\A}^{-1}(z)\widetilde{\psi_{\ell^{2r}}}(u_\ell)z_p^{k-2}\int_{p\zp\times\zp^\times}\varepsilon(y)\left(P((x,y))\right)d(\varphi(\tilde{b}))(x,y)\\
			=\psi_{N,\A}^{-1}(z)\varepsilon_{\A}(z)^{-1}\psi_{\ell^{2r},\A}^{-1}(z)\widetilde{\psi_{\ell^{2r}}}(u_\ell)z_p^{k-2} \nu_{k,\varepsilon}(\varphi)(\tilde{b})(P).
		\end{multline}
		The equivariance with respect to the $T_q$ operators is obvious, as well as that for the operators $U_q$ with $q\neq p$ (also for $\tilde{U}_\ell$). To prove the equivariance at $p$ it is enough to note that we have
		\begin{multline}
			\nu_{k,\varepsilon}\varphi|_{\mat{1}{0}{0}{p}}\left(\tilde{b}\mat{1}{0}{0}{p}\right)(P) =\int_{\mathsf{X}}\chi_{p\zp\times\zp^\times}(x,y)\varepsilon(y)P\left(x,y\right)d\left(\varphi|_{\mat{1}{0}{0}{p}}\left(\tilde{b}\mat{1}{0}{0}{p}\right)\right)(x,y) \\
			=\int_{\mathsf{X}}\chi_{p\zp\times\zp^\times}(x,py)\varepsilon(py)P\left(x,py\right)d\left(\varphi\left(\tilde{b}\mat{1}{0}{0}{p}\right)\right)(x,y)=0
		\end{multline}
		for $\chi_{p\zp\times\zp^\times}(x,y)$ the characteristic function of $p\zp\times\zp^\times$.
	\end{proof}
	
	We must now investigate the properties of the space $S_2(U_0,\psi, \mathcal{M}(\mathsf{X},A))$. We begin by noticing that the action on $\mathscr{C}(\mathsf{X},A)$ is exactly the one induced by the right action, 
	defined by right multiplication, of $M_2(\zp)$ on $\mathsf{X}$. We proceed similarly to Proposition 7.5 of \cite{LongoRotgerVigni2012} or Chapter 6 of \cite{GreenbergStevens1993} and, denoting by $\mathsf{X}_n$ the set of primitive vectors in $(\z/p^n\z)^2$, we recover $\mathsf{X}=\varprojlim \mathsf{X}_n$, with respect to the canonical projection maps. We obtain then $\mathcal{M}(\mathsf{X},A)=\varprojlim \mathcal{M}(\mathsf{X}_n,A)$ (see \emph{e.g.} Section 7 of \cite{MSwD1974}). Since $\mathsf{X}_n$ is a finite set, $\mathcal{M}(\mathsf{X}_n,A)$ is identified with the space $Hom(\mathsf{X}_n,A)$ of step functions. The action of $U_0$ on $\mathsf{X}_n$ is transitive and the stabilizer of $(0,1)$ is
	\begin{equation}
		Stab_{U_0}((0,1))=\{\gamma\in U_0 \mid (0,1)\gamma=(0,1)\mat{a}{b}{c}{d}=(c,d)=(0,1)\}=U_n.
	\end{equation}
	This shows that $\mathsf{X}_n=U_0/U_n=(U_0)_p/(U_n)_p$ and then that $\mathcal{M}(\mathsf{X}_n,A)=Hom_{U_n}(\mathcal{O}[U_0],A)$. We denote by $\widehat{B}^{p,\times}=B^\times\backslash \widehat{B}^\times/U_0^p$ the profinite double quotient associated with $U_0^p=U_0\cap B(\A_{\q,f}^{(p)})^\times$, for $\A_{\q,f}^{(p)}$ the ring of finite ad\`{e}les away from $p$. By Shapiro's Lemma we obtain
	\begin{equation}\label{shapiro}
		S_2(U_0,\boldsymbol{1},\mathcal{M}(\mathsf{X}_n,A))\cong\left(Hom_{\mathcal{O}}(\mathcal{O}[\widehat{B}^{p,\times}],\mathcal{M}(\mathsf{X}_n,A))\right)^{(U_0)_p}\cong S_2(U_n,\boldsymbol{1},A),
	\end{equation}
	as well as the analogous isomorphism when we consider a character $\psi$. Equation (\ref{shapiro}) implies that
	\begin{equation}\label{inverse limit measures}
		S_2(U_0,\psi,\mathcal{M}(\mathsf{X},A))=\varprojlim S_2(U_n,\psi,A),
	\end{equation}
	where the identification is $\mathsf{H}_{univ}^{B}$-equivariant. We hence deduce that $S_2(U_0,\psi,\mathcal{M}(\mathsf{X},A))$ is a compact $\mathcal{O}$-module, since $\mathcal{O}$ is $p$-adically complete and each $S_2(U_n,\psi,A)$ is a finitely generated free $\mathcal{O}$-module. This allows us to define its ordinary part $S_2(U_0,\psi, \mathcal{M}(\mathsf{X},A))^{ord}$ as usual (see Section 2.4 of \cite{LongoVigni2012} and the references therein) as its direct summand on which the Hecke operator $T_p$ acts invertibly. We shorten the notation and denote by $\mathbb{W}$ the space $S_2(U_0,\psi, \mathcal{M}(\mathsf{X},\mathcal{O}))^{ord}$. In particular, the Hecke-equivariance in the inverse limit construction of $S_2(U_0,\psi, \mathcal{M}(\mathsf{X},\mathcal{O}))$, implies that $S_2(U_0,\psi, \mathcal{M}(\mathsf{X},\mathcal{O}))^{ord}=\varprojlim S_2(U_n,\psi,\mathcal{O})^{ord}$, where $T_p$ is replaced by $U_p$ on each component of the inverse limit. Proposition \ref{specialization maps prop} shows that the specialization maps descend to Hecke-equivariant specialization maps between the ordinary components,
	\begin{equation}
		\nu_{k,\varepsilon}^{ord}: \mathbb{W}\longrightarrow S_k(U_n,\psi\varepsilon, \mathcal{O}(\varepsilon))^{ord},
	\end{equation}
	with the same definition of $\nu_{k,\varepsilon}$ and where $\mathcal{O}(\varepsilon)$ is the finite extension of $\mathcal{O}$ generated by the values of $\varepsilon$.
	\begin{notation}
		We need to introduce some more notation. 
		\begin{enumerate}[label=(\alph*)]
			\item For any $m\geq 1$ and any character $\chi:\zp^\times\longrightarrow \overline{\qp}^\times$, let $\Psi_{m,\chi}:\mathsf{X}\longrightarrow \overline{\qp}^\times$ such that 
			\begin{equation}
				\Psi_{m,\chi}((x,y))=\begin{cases}
					\chi(y) & \textrm{ if } x\in p^m\zp,\\
					0 & \textrm{ otherwise.}
				\end{cases}
			\end{equation}
			In particular, $\Psi_{m,\chi}$ is homogeneous of degree $\chi$ for each $m$.
			\item Let $\Lambda=\mathcal{O}[\![ 1+p\zp]\!]$ be the extension of the classical Iwasawa algebra $\zp[\![ 1+p\zp]\!]$, obtained by $\mathcal{O}$. We set $\mathbb{W}_\Omega:=\mathbb{W}\otimes_{\Lambda}\Omega$ for any $\Lambda$-algebra $\Omega$.
			\item We say that a homomorphism $\kappa:\mathcal{R}\longrightarrow \overline{\qp}$ is an \emph{arithmetic homomorphism} if its restriction to $\zp^\times$ is of the form $\kappa_{|\zp^\times}(x)=x^{k-2}\varepsilon(x)$ for $k\geq 2$ and $\varepsilon:\zp^\times\longrightarrow \overline{\qp}^\times$ a character which factors through $\zp^\times/(1+p^n\zp)$, with $n$ minimal. In this situation, we say that $\kappa$ has weight $k$ and character $\varepsilon$ of conductor $p^n$.
		\end{enumerate}
	\end{notation}
	
	\begin{lemma}\label{Lemma inj}
		Let $\kappa:\mathcal{R}\longrightarrow \overline{\qp}$ be an arithmetic homomorphism  of weight $k$ and character $\varepsilon$ of conductor $p^n$. Let $F_\kappa$ be the field extension of $F$ containing the values of $\kappa$. The map $\nu_{k,\varepsilon}^{ord}$ induces the injective Hecke-equivariant homomorphism
		\begin{equation}
			\nu_{k,\varepsilon}^{ord}: \mathbb{W}_\mathcal{R}/\mathcal{P}_{\kappa}\mathbb{W}_\mathcal{R}\hookrightarrow S_k(U_n,\psi\varepsilon, F_\kappa)^{ord},
		\end{equation}
		where $\mathcal{P}_{\kappa}$ is the kernel of $\kappa$ in $\mathcal{R}$.
	\end{lemma}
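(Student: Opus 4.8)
The plan is to prove the two inclusions $\mathcal{P}_\kappa\mathbb{W}_\mathcal{R}\subseteq\ker\nu_{k,\varepsilon}^{ord}$ (factorization through the quotient) and $\ker\nu_{k,\varepsilon}^{ord}\subseteq\mathcal{P}_\kappa\mathbb{W}_\mathcal{R}$ (injectivity) separately, following the strategy of \cite{LongoVigni2012}. Throughout I would exploit the identification (\ref{inverse limit measures}), $\mathbb{W}=\varprojlim_m S_2(U_m,\psi,\mathcal{O})^{ord}$, together with the Shapiro isomorphism (\ref{shapiro}), in order to transfer statements about measure-valued forms to the tower of finite-level spaces and, ultimately, to a purely local statement about $\mathcal{O}$-valued measures on the region $p\zp\times\zp^\times\subseteq\mathsf{X}$. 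Hecke-equivariance of the induced map is automatic from Proposition \ref{specialization maps prop}, so the content is the identification of the kernel.

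For the factorization, write $\mathbb{W}_\mathcal{R}/\mathcal{P}_\kappa\mathbb{W}_\mathcal{R}=\mathbb{W}\otimes_\Lambda(\mathcal{R}/\mathcal{P}_\kappa)$, and observe that $\kappa$ identifies $\mathcal{R}/\mathcal{P}_\kappa$ with a subring of $F_\kappa$. It therefore suffices to check that $\nu_{k,\varepsilon}^{ord}$ kills $\mathcal{I}_\kappa\mathbb{W}$, where $\mathcal{I}_\kappa=\mathcal{P}_\kappa\cap\Lambda=\ker(\kappa|_\Lambda)$ and $\kappa|_\Lambda$ sends the group-like $[z]$, for $z\in 1+p\zp$, to $z^{k-2}\varepsilon(z)$. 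This is a direct computation from the definition of $\nu_{k,\varepsilon}$, of the same nature as the one carried out in the proof of Proposition \ref{specialization maps prop}: the action of $[z]$ scales the $y$-variable of the measures, and since the integrand $\varepsilon(y)P(x,y)$ is the product of the character $\varepsilon$ with a polynomial homogeneous of degree $k-2$, this scaling multiplies the specialization by the scalar $z^{k-2}\varepsilon(z)$, which is precisely the value $\kappa([z])$. Hence $[z]-\kappa([z])$ lies in the kernel after specialization, so $\mathcal{I}_\kappa\mathbb{W}\subseteq\ker\nu_{k,\varepsilon}^{ord}$, the map factors through $\mathbb{W}/\mathcal{I}_\kappa\mathbb{W}$, and base-changing along $\Lambda/\mathcal{I}_\kappa\hookrightarrow\mathcal{R}/\mathcal{P}_\kappa$ yields the desired factorization.

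For injectivity I would prove the stronger inclusion $\ker\nu_{k,\varepsilon}^{ord}\subseteq\mathcal{I}_\kappa\mathbb{W}_\mathcal{R}\subseteq\mathcal{P}_\kappa\mathbb{W}_\mathcal{R}$, reducing to the local statement that a measure $\mu$ on $\mathsf{X}$ all of whose moments $\int_{p\zp\times\zp^\times}\varepsilon(y)P(x,y)\,d\mu$, $P\in L_{k-2}$, vanish is divisible, in the relevant Iwasawa module, by a generator of $\mathcal{I}_\kappa$. Concretely, restricting to $p\zp\times\zp^\times$ and passing through the Amice transform, one identifies the module of measures in the $y$-variable with $\mathcal{O}[\![\zp^\times]\!]$, hence, after separating the part cut out by $\varepsilon$, with $\Lambda$; under this identification $\nu_{k,\varepsilon}$ becomes evaluation of a power series at the arithmetic point attached to $\kappa$, whose kernel is exactly the principal ideal $\mathcal{I}_\kappa$ generated by the associated distinguished polynomial. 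The delicate point, and the one I expect to be the main obstacle, is the passage from this scalar-level divisibility to divisibility inside the module $\mathbb{W}_\mathcal{R}$: this requires the freeness (or at least $\mathcal{R}$-torsion-freeness) of $\mathbb{W}\otimes_\Lambda\mathcal{R}$ arranged under the mild hypothesis of Remark \ref{freeness}, so that the Amice computation may be run coordinatewise, and it requires verifying that integrating only over $p\zp\times\zp^\times$, together with the full set of degree-$(k-2)$ homogeneous moments, genuinely detects the Iwasawa variable. Once these compatibilities are in place, the argument reduces to the standard evaluation-at-a-point computation in the Iwasawa algebra, giving $\ker\nu_{k,\varepsilon}^{ord}\subseteq\mathcal{I}_\kappa\mathbb{W}_\mathcal{R}$ and hence the claimed injectivity.
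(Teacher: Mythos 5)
Your forward inclusion (that $\nu_{k,\varepsilon}^{ord}$ kills $\mathcal{P}_\kappa\mathbb{W}_{\mathcal{R}}$) is fine and is essentially the paper's argument in different clothing: you compute directly that a group-like $[z]$ acts on the specialization by $z^{k-2}\varepsilon(z)=\kappa([z])$, whereas the paper packages the same homogeneity observation via Lemma 6.3 of \cite{GreenbergStevens1993}, which characterizes $\mathcal{P}_\kappa\mathcal{M}(\mathsf{X},\mathcal{O})$ as the measures annihilating \emph{all} continuous functions homogeneous of degree $\kappa$ (after commuting $\mathcal{P}_\kappa$ past the level structure using Lemma 1.2 of \cite{AshStevens1997}).

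The gap is in the injectivity step, and it sits exactly at the point you flag as ``delicate'' and then do not resolve. The local statement you want --- that a measure whose moments against $\varepsilon(y)P(x,y)$, $P\in L_{k-2}$, over $p\zp\times\zp^\times$ all vanish must lie in $\mathcal{I}_\kappa$ times the measure module --- is false as stated. A function homogeneous of degree $\kappa$ on $p\zp\times\zp^\times$ has the form $\kappa(y)g(x/y)$ for an \emph{arbitrary} continuous $g$ on $p\zp$, while your test functions only realize $g$ a polynomial of degree $\le k-2$; moreover a measure supported away from $p\zp\times\zp^\times$ (e.g.\ on $\zp^\times\times p\zp\subset\mathsf{X}$) has all these moments zero without lying in $\mathcal{P}_\kappa\mathcal{M}(\mathsf{X},\mathcal{O})$. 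The Amice transform in the $y$-variable alone cannot see this, and freeness of $\mathbb{W}\otimes_\Lambda\mathcal{R}$ (which, note, is \emph{not} a hypothesis of the lemma --- Remark \ref{freeness} is a consequence of it) does not help. The missing idea is ordinarity: given $\varphi\in\ker\nu_{k,\varepsilon}^{ord}$, the paper writes $\varphi=T_p^m\mu$ for every $m\ge 1$, and the explicit coset decomposition of $T_p^m$ (only the matrices $\mat{p^m}{*}{0}{1}$ survive, since $\Psi_{m,\kappa}((x,y)\gamma_\infty)=0$) converts the pairing of $\varphi(\tilde b)$ against the genuinely new homogeneous functions $\Psi_{m,\kappa}$ into $U_p^m\,\nu_{k,\varepsilon}^{ord}(\mu(\tilde b))(y^{k-2})$, which vanishes because $U_p$ is invertible on $S_k(U_n,\psi\varepsilon,F_\kappa)^{ord}$. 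Letting $m$ vary (together with the $U_0$-equivariance of $\varphi$, which supplies the translates) exhausts a dense span of the degree-$\kappa$ homogeneous functions, and only then does the Greenberg--Stevens criterion give $\varphi(\tilde b)\in\mathcal{P}_\kappa\mathcal{M}(\mathsf{X},\mathcal{O})$. Without this use of $T_p$-invertibility your argument proves nothing beyond the vanishing of finitely many moments, and the conclusion fails.
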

	\begin{proof}
		We begin noting that $\mathcal{P}_{\kappa}S_2(U_0,\psi, \mathcal{M}(\mathsf{X},\mathcal{O}))=S_2(U_0,\psi, \mathcal{P}_{\kappa}\mathcal{M}(\mathsf{X},\mathcal{O}))$, as it can be seen by applying twice Lemma 1.2 of \cite{AshStevens1997} to $S_2(U_0,\psi, \mathcal{M}(\mathsf{X},\mathcal{O}))=H^{0}(U_0^p,H^1(F[\widehat{B}^{p,\times}],\mathcal{M}(\mathsf{X},\mathcal{O})))$. We prove now that $\mathcal{P}_{\kappa}\mathbb{W}=ker(\nu_{k,\varepsilon}^{ord})$. Let $\varphi\in \mathcal{P}_{\kappa}\mathbb{W}$; therefore $\varphi(\tilde{b})$ lies in $\mathcal{P}_{\kappa}\mathcal{M}(\mathsf{X},\mathcal{O})$ for any $\tilde{b}\in B^\times\backslash \widehat{B}^\times$. Lemma 6.3 of \cite{GreenbergStevens1993} shows that $\varphi(\tilde{b})\in \mathcal{P}_{\kappa}\mathcal{M}(\mathsf{X},\mathcal{O})$ if and only if $\varphi(\tilde{b})(f)=0$ for each homogeneous function of degree $\kappa$. For each $P\in L_{k-2}(F_\kappa)$, $\varepsilon(y)P(x,y)$ is homogeneous of degree $\kappa$ and hence $\nu_{k,\varepsilon}^{ord}(\varphi(\tilde{b}))(P)=0$ for each $\tilde{b}$ and $P$. Take now $\varphi\in ker(\nu_{k,\varepsilon}^{ord})$ and let $m\geq 1$. Since $T_p$ is invertible, let $\mu\in \mathbb{W}$ be such that $T_p^m\mu=\varphi$. Let $\gamma_a:=\mat{p}{a}{0}{1}$ for $a=0,\ldots,p-1$ and $\gamma_\infty:=\mat{1}{0}{0}{p}$. The definition of the Hecke operator $T_p$ in Section \ref{Hecke operators} does come from the coset decomposition $U_0\gamma_\infty U_0=\bigsqcup_{\alpha=0,\ldots,p-1,\infty}\gamma_\alpha U_0$. Then $T_p^m$ corresponds to a decomposition of the form $\bigsqcup_{i}\gamma_{m,i}U_0$, where each $\gamma_{m,i}$ is a product of $m$ matrices $\gamma_\alpha$, for $\alpha=0,\ldots,p-1,\infty$. We compute
		\begin{equation}
			\int_{p\zp\times\zp^\times}\Psi_{m,\kappa}(x,y)d(\varphi(\tilde{b}))(x,y)=\sum_{i}\int_{p\zp\times\zp^\times}\Psi_{m,\kappa}\left((x,y)\gamma_{m,i}\right)d(\mu\left(\tilde{b}\cdot \gamma_{m,i}\right))(x,y).
		\end{equation}
		For each $m$, $\Psi_{m,\kappa}\left((x,y)\gamma_{\infty}\right)=0$ thus, $\Psi_{m,\kappa}\left((x,y)\gamma_{m,i}\right)=0$ whenever $\gamma_{m,i}$ contains a copy of $\gamma_\infty$. Therefore, only the matrices $\gamma_{m,i}=\prod_{j=0,m-1}\gamma_{\alpha_{j}^i}=\mat{p^m}{\sum_j\alpha_{j}^ip^j}{0}{1}$ contribute to the integral and we recognize that 
		\begin{multline}
			\sum_{i}\int_{p\zp\times\zp^\times}\gamma_{m,i}\cdot \left(\varepsilon(y)y^{k-2}\right)d(\mu\left(\tilde{b}\cdot \gamma_{m,i}\right))(x,y)\\
			=U_p^m\int_{p\zp\times\zp^\times}\varepsilon(y)y^{k-2} d(\mu(\tilde{b}))(x,y)
			=U_p^m\nu_{k,\varepsilon}^{ord}(\mu(\tilde{b}))(y^{k-2}).
		\end{multline}
		By construction, $0=\nu_{k,\varepsilon}^{ord}(\varphi(\tilde{b}))=\nu_{k,\varepsilon}^{ord}(T_p^m\mu(\tilde{b}))=U_p^m\nu_{k,\varepsilon}^{ord}(\mu(\tilde{b}))$ and since $U_p$ is invertible on the space $S_k(U_n,\psi\varepsilon, F_\kappa)^{ord}$, $\nu_{k,\varepsilon}^{ord}(\mu(\tilde{b}))=0$ and hence $\nu_{k,\varepsilon}^{ord}(\varphi(\tilde{b}))(y^{k-2})=0$. Lemma 6.3 of \cite{GreenbergStevens1993} implies that $\varphi(\tilde{b})\in \mathcal{P}_{\kappa}\mathbb{W}$.
	\end{proof}
	\begin{remark}\label{freeness}
		As in the case of Eichler orders, the space of quaternionic modular forms $\mathscr{S}_k(U_n,\widetilde{\psi},\mathcal{O})$ is often finitely generated over $\zp$ and free, as it follows from the discussion in Section \ref{Quaternionic modular forms of weight $2$}. In particular, this holds true under the conditions discussed in Proposition \ref{torsion free Gammas}. Therefore, Lemma \ref{Lemma inj} implies that $\mathbb{W}_\mathcal{R}/\mathcal{P}_{\kappa}\mathbb{W}_\mathcal{R}$ is $\zp$-finitely generated and free. The discussion in Section \ref{Quaternionic modular forms of weight $2$} shows also that $S_2(U_0,\psi,\mathcal{M}(\mathsf{X},A))$ is often ${\widetilde{\Lambda}}$-free and finitely generated, once again, for example under the conditions in Proposition \ref{torsion free Gammas}. One can argue as in the proofs of Theorem 10.1, Corollary 10.3, and Corollary 10.4 of \cite{Hida1988b}, since the results proved there for quaternionic modular forms over definite quaternion algebras hold in more generality for special orders which are split at the interpolation prime $p$ (see also Remark \ref{remark on hida}).
	\end{remark}
	
	\subsection{The proof of the control theorem}
	
	As in Section \ref{choice of modular form}, we fix a $p$-ordinary newform $f$ in $S_2^{new}(\Gamma_1(Np^n\ell^{2r}),\psi,\mathbb{C})$, for $n\geq 1$ and with $\psi$ a Dirichlet character modulo $Np^n\ell^{2r}$ satisfying Assumptions \ref{chi-R} and \ref{psi hyp}. We also assume that its associated $p$-adic Galois representations is residually absolutely irreducible and $p$-distinguished. Let $f_\infty:\mathsf{H}_{\infty}^{B,ord}(F)\longrightarrow \mathcal{R}$ the homomorphism associated in Section \ref{choice of modular form} with the Hida family passing through $f$. Recall that $F=\qp(f)$ and that $\mathcal{O}$ is its ring of integers. For any $\mathcal{P}_\kappa$ as in the above Lemma \ref{Lemma inj}, we denote by $f^{B}_\kappa$ the composition
	\begin{equation}
		f^{B}_\kappa:\mathsf{H}_{univ}^B\longrightarrow \mathsf{H}^{B,ord}_\infty(F)\overset{f_\infty}{\longrightarrow} \mathcal{R}\longrightarrow \mathcal{R}_{\mathcal{P}_\kappa},
	\end{equation}
	where the first map is the compatible morphism of Section \ref{Hecke algebras} and the last map is the one to the localization of $\mathcal{R}$ at the prime $\mathcal{P}_\kappa$. We recall that $\mathcal{R}$ is a $\widetilde{\Lambda}$-algebra; we identify any $\widetilde{\Lambda}$-algebra as a $\Lambda$-algebra via the inclusion $\Lambda\hookrightarrow\widetilde{\Lambda}$ and write
	\begin{equation}
		\widetilde{\mathbb{W}}_{\kappa}:=\left(\mathbb{W}\otimes_{\Lambda} \mathcal{R}_{\mathcal{P}_\kappa}\right)[f^{B}_\kappa]
	\end{equation}
	for the isotypic component of the $\mathcal{R}_{\mathcal{P}_\kappa}$-module $\mathbb{W}\otimes_{\Lambda} \mathcal{R}_{\mathcal{P}_\kappa}$, where the Hecke operators act as determined by $f^{B}_\kappa$.
	\begin{proposition}\label{Prop inj}
		With the notation of Lemma \ref{Lemma inj}, there is an induced injective homomorphism
		\begin{equation}
			\nu_\kappa: \widetilde{\mathbb{W}}_{\kappa}/\mathcal{P}_\kappa \widetilde{\mathbb{W}}_{\kappa} \hookrightarrow \left(S_k(U_n,\psi\varepsilon, F_\kappa)^{ord}\right)[f_\kappa],
		\end{equation}
		for $f_\kappa$ the weight-$\kappa$ specialization of $f_\infty$.
	\end{proposition}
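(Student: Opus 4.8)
The plan is to obtain $\nu_\kappa$ as the restriction to isotypic components of the localization at $\mathcal{P}_\kappa$ of the injection $\nu_{k,\varepsilon}^{ord}$ furnished by Lemma \ref{Lemma inj}, injectivity being preserved at each step because localization is exact and, under the running freeness hypothesis, the relevant isotypic component is a direct summand. First I would apply the flat base change $(-)\otimes_{\mathcal{R}}\mathcal{R}_{\mathcal{P}_\kappa}$ to $\nu_{k,\varepsilon}^{ord}$. Because $\mathbb{W}\otimes_\Lambda\mathcal{R}_{\mathcal{P}_\kappa}=(\mathbb{W}_\mathcal{R})_{\mathcal{P}_\kappa}$ and $\mathcal{R}_{\mathcal{P}_\kappa}/\mathcal{P}_\kappa\mathcal{R}_{\mathcal{P}_\kappa}=F_\kappa$, one has
$$\bigl(\mathbb{W}\otimes_\Lambda\mathcal{R}_{\mathcal{P}_\kappa}\bigr)/\mathcal{P}_\kappa\bigl(\mathbb{W}\otimes_\Lambda\mathcal{R}_{\mathcal{P}_\kappa}\bigr)=\mathbb{W}\otimes_\Lambda F_\kappa,$$
and the exactness of localization turns Lemma \ref{Lemma inj} into an injective, $\mathsf{H}^{B}_{univ}$-equivariant, $F_\kappa$-linear homomorphism $\mathbb{W}\otimes_\Lambda F_\kappa\hookrightarrow S_k(U_n,\psi\varepsilon,F_\kappa)^{ord}$.

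Next I would cut out the isotypic components. By Proposition \ref{specialization maps prop} the map $\nu_{k,\varepsilon}^{ord}$ is $\mathsf{H}^{B}_{univ}$-equivariant and intertwines $T_p$ with $U_p$, hence it carries the submodule on which the Hecke operators act through $f^{B}_\kappa$ into the submodule on which they act through the reduction of $f^{B}_\kappa$ modulo $\mathcal{P}_\kappa$. By the construction of $f^{B}_\kappa$ as the composite $\mathsf{H}_{univ}^B\to\mathsf{H}^{B,ord}_\infty(F)\overset{f_\infty}{\longrightarrow}\mathcal{R}\to\mathcal{R}_{\mathcal{P}_\kappa}$ and the compatibility of $JL_\infty$ with the Hecke and diamond operators away from $\ell$, this reduced eigensystem is exactly that of the weight-$\kappa$ specialization $f_\kappa$; therefore the restriction of the homomorphism above to the $f^{B}_\kappa$-part lands inside $\bigl(S_k(U_n,\psi\varepsilon,F_\kappa)^{ord}\bigr)[f_\kappa]$. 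It then remains to identify the source $\widetilde{\mathbb{W}}_\kappa/\mathcal{P}_\kappa\widetilde{\mathbb{W}}_\kappa$ of $\nu_\kappa$ with the $f^{B}_\kappa$-isotypic part of $\mathbb{W}\otimes_\Lambda F_\kappa$, compatibly with the displayed injection.

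The main obstacle is precisely this identification. Since $\widetilde{\mathbb{W}}_\kappa=(\mathbb{W}\otimes_\Lambda\mathcal{R}_{\mathcal{P}_\kappa})[f^{B}_\kappa]$ is carved out as a submodule of $\mathbb{W}\otimes_\Lambda\mathcal{R}_{\mathcal{P}_\kappa}$, reduction modulo $\mathcal{P}_\kappa$ is only right exact, and one must rule out a $\mathrm{Tor}$ contribution so that $\widetilde{\mathbb{W}}_\kappa/\mathcal{P}_\kappa\widetilde{\mathbb{W}}_\kappa$ injects into $(\mathbb{W}\otimes_\Lambda F_\kappa)[f^{B}_\kappa\bmod\mathcal{P}_\kappa]$. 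I would resolve this via the freeness of $\mathbb{W}\otimes_\Lambda\mathcal{R}$ over $\mathcal{R}$ recorded in Remark \ref{freeness}: over the local ring $\mathcal{R}_{\mathcal{P}_\kappa}$ the Hecke action splits $\mathbb{W}\otimes_\Lambda\mathcal{R}_{\mathcal{P}_\kappa}$ as a direct sum of eigencomponents cut out by orthogonal idempotents, so $\widetilde{\mathbb{W}}_\kappa$ is a direct summand, hence flat over $\mathcal{R}_{\mathcal{P}_\kappa}$, and the formation of the $f^{B}_\kappa$-isotypic part commutes with $-\otimes_{\mathcal{R}_{\mathcal{P}_\kappa}}F_\kappa$. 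Composing the resulting $F_\kappa$-linear injection $\widetilde{\mathbb{W}}_\kappa/\mathcal{P}_\kappa\widetilde{\mathbb{W}}_\kappa\hookrightarrow(\mathbb{W}\otimes_\Lambda F_\kappa)[f_\kappa]$ with the isotypic restriction of the injection of the first paragraph yields the desired injective homomorphism $\nu_\kappa$; this follows the line of argument of \cite{LongoVigni2012}, and I note that the two-dimensionality of the eigenspaces (Proposition \ref{pointwise dim}) plays no role in the injectivity asserted here, entering only when $\nu_\kappa$ is upgraded to the isomorphism of the control theorem.
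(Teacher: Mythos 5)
Your overall route is the paper's own: the printed proof of Proposition \ref{Prop inj} is a one-line deferral to Proposition 3.5 of \cite{LongoVigni2012}, and what you write out is essentially that argument --- start from Lemma \ref{Lemma inj}, use Hecke-equivariance of $\nu_{k,\varepsilon}^{ord}$ to land in the $f_\kappa$-isotypic part, and control the passage from $\widetilde{\mathbb{W}}_{\kappa}$ to its reduction modulo $\mathcal{P}_\kappa$. You also correctly isolate the one nontrivial point, namely that $\widetilde{\mathbb{W}}_{\kappa}\cap\mathcal{P}_\kappa\bigl(\mathbb{W}\otimes_\Lambda\mathcal{R}_{\mathcal{P}_\kappa}\bigr)=\mathcal{P}_\kappa\widetilde{\mathbb{W}}_{\kappa}$.

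Your justification of that point, however, has a gap. A finite Hecke algebra over the local ring $\mathcal{R}_{\mathcal{P}_\kappa}$ need not contain the idempotent cutting out the $f^{B}_\kappa$-component: idempotents exist in $T\otimes\mathcal{K}$ after inverting everything, but they do not in general descend to $T$ without completing or henselizing, so ``the Hecke action splits $M:=\mathbb{W}\otimes_\Lambda\mathcal{R}_{\mathcal{P}_\kappa}$ into eigencomponents by orthogonal idempotents'' is not available as stated; nor does freeness of $\mathbb{W}\otimes_\Lambda\mathcal{R}$ by itself produce the splitting. The standard repair --- and the one underlying the cited Longo--Vigni proof --- uses that $\mathcal{R}$ is the integral closure of $\Lambda$ in $\mathcal{K}$ and $\mathcal{P}_\kappa$ has height one, so $\mathcal{R}_{\mathcal{P}_\kappa}$ is a discrete valuation ring: if $x\in\widetilde{\mathbb{W}}_{\kappa}$ lies in $\mathcal{P}_\kappa M$, write $x=\pi y$ with $\pi$ a uniformizer and $y\in M$; then $\pi\bigl(Ty-f^{B}_\kappa(T)y\bigr)=Tx-f^{B}_\kappa(T)x=0$ for every Hecke operator $T$, and torsion-freeness of $M$ over the DVR forces $y\in\widetilde{\mathbb{W}}_{\kappa}$, whence $x\in\mathcal{P}_\kappa\widetilde{\mathbb{W}}_{\kappa}$. (Equivalently, this computation shows $M/\widetilde{\mathbb{W}}_{\kappa}$ is torsion-free, hence free over the DVR, so $\widetilde{\mathbb{W}}_{\kappa}$ is indeed a direct summand of $M$ as an $\mathcal{R}_{\mathcal{P}_\kappa}$-module --- but it is the DVR property, not Hecke idempotents, that does the work.) With that substitution your argument is complete and coincides with the paper's.
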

	\begin{proof}
		The proof is the same as of Proposition 3.5 in \cite{LongoVigni2012}, since it does not depend on the choice of the quaternionic order.
	\end{proof}
	As one can note from Theorem \ref{Thm 7.16-7}, the case of level $\ell^2$ and trivial character has to be handled with more care. The theory of Hida families for classical modular forms is well known and we can restrict our attention to the Hecke-submodules $S_k^{new}(\Gamma_1(Np^n\ell),\psi,F)^{ord}$ with $\psi$ a Dirichlet character modulo $Np^n$, with $n\geq 1$. We do not provide details here, but we refer to Chapter 7 of \cite{hida_1993} and Section 2 of \cite{LongoVigni2012}. We construct the space of ${\widetilde{\Lambda}}$-adic modular newforms, level $Np^n\ell$ and character $\psi$, as $\mathbb{W}^{\ell}:=\varprojlim S_2^{new}(\Gamma_1(Np^n\ell),\psi,F)^{ord}$. Moreover, we can twist its Hecke action by the character $\left(\frac{-}{\ell}\right)$ obtaining the corresponding space $\mathbb{W}^{\ell,\left(\frac{-}{\ell}\right)}:=\varprojlim S_2^{new}(\Gamma_1(Np^n\ell),\psi,F)^{\left(\frac{-}{\ell}\right),ord}$. As in Section \ref{Hecke algebras} we have an action of the universal Hecke algebra $\mathsf{H}_{univ}$ on $\mathbb{W}^{\ell,\left(\frac{-}{\ell}\right)}$.  In particular, taking $f$ in $S_k^{new}(\Gamma_1(Np^n\ell),\psi,F)^{\left(\frac{-}{\ell}\right),ord}$, the module $\left(\mathbb{W}^{\ell,\left(\frac{-}{\ell}\right)}\otimes_{\Lambda} \mathcal{R}_{\mathcal{P}_{(k,\varepsilon)}}\right)[f]$ is a free rank-1 $\mathcal{R}_{\mathcal{P}_{(k,\varepsilon)}}$-module (see Proposition 2.17 and the proof of Theorem 2.18 in \cite{LongoVigni2012}).
	\begin{lemma}\label{Main lemma}
		Assume $\mathbb{W}$ to be ${\widetilde{\Lambda}}$-free and finitely generated (see Remark \ref{freeness}). Suppose that $f\in T_2(n,r,\psi)$ and set, for any arithmetic homomorphism $\kappa=(k,\varepsilon)$,
		\begin{equation}
			{\mathbb{W}_{\kappa}}:=\begin{cases}
				\widetilde{\mathbb{W}}_{\kappa}\oplus \left(\mathbb{W}^{\ell,\left(\frac{-}{\ell}\right)}\otimes_{\Lambda} \mathcal{R}_{\mathcal{P}_{\kappa}}\right)[f_\kappa] & \textrm{ if $r=1$ and $\psi_\ell$ is the trivial character,}\\
				\widetilde{\mathbb{W}}_{\kappa} & \textrm{ otherwise,}
			\end{cases}
		\end{equation}
		where we let $\mathsf{H}_{univ}$ act on ${\mathbb{W}_\kappa}$ via the homomorphism $\mathsf{H}_{univ}\rightarrow \mathsf{H}_{univ}^B$ induced by the Jacquet--Langlands correspondence. Then ${\mathbb{W}_{\kappa}}$ is a free rank-2 $\mathcal{R}_{\mathcal{P}_\kappa}$-module.
	\end{lemma}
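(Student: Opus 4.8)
The plan is to show that each summand in the definition of $\mathbb{W}_{\kappa}$ is a free $\mathcal{R}_{\mathcal{P}_\kappa}$-module of the appropriate rank and then to use that a finite direct sum of free modules over a local ring is free, with rank the sum of the ranks. Throughout I exploit that $\mathcal{R}_{\mathcal{P}_\kappa}$ is a discrete valuation ring: $\mathcal{R}$ is the branch of the universal ordinary Hecke algebra through $f_\infty$, a domain finite flat over $\Lambda$, and $\mathcal{P}_\kappa=\ker(\kappa)$ is a height-one arithmetic prime, so the localization is a DVR (this is where residual irreducibility and $p$-distinguishedness enter, as in \cite{Hida1988b} and \cite{GreenbergStevens1993}). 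Over a DVR a finitely generated module is free if and only if it is torsion-free, and for a free module the generic rank agrees with the $F_\kappa$-dimension of the special fibre; I will use this equivalence repeatedly. The first step is freeness of $\widetilde{\mathbb{W}}_{\kappa}$: since $\mathbb{W}$ is $\widetilde{\Lambda}$-free and finitely generated (Remark \ref{freeness}), the module $\mathbb{W}\otimes_{\Lambda}\mathcal{R}$ is free over $\widetilde{\Lambda}\otimes_{\Lambda}\mathcal{R}$, and after cutting out the $f^{B}_\kappa$-isotypic component and localizing at $\mathcal{P}_\kappa$ one obtains a direct summand of a finitely generated free $\mathcal{R}_{\mathcal{P}_\kappa}$-module, hence a projective and therefore free module over the local ring $\mathcal{R}_{\mathcal{P}_\kappa}$. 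The passage between $\Lambda$ and $\widetilde{\Lambda}$ is harmless once the isotypic projection has fixed the tame nebentype at $p$.

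It then remains to compute the rank, which can be done on either fibre. For the main (``otherwise'') case I would interpolate the classical identification of Proposition \ref{square iso hecke-mf}, which exhibits $\mathscr{S}^{new}_k(U_n,\widetilde{\psi},\mathbb{C})$ as two copies of the Hecke algebra $\mathsf{h}_n^{T}$; passing to the ordinary inverse limit, this becomes the statement that the relevant part of $\mathbb{W}$ is, after localization at the branch, two copies of $\mathcal{R}$, so the generic rank of $\widetilde{\mathbb{W}}_{\kappa}$ over $\mathcal{R}_{\mathcal{P}_\kappa}$ equals $2$. As an independent upper bound and consistency check, Proposition \ref{Prop inj} embeds the special fibre $\widetilde{\mathbb{W}}_{\kappa}/\mathcal{P}_\kappa\widetilde{\mathbb{W}}_{\kappa}$ into $\bigl(S_k(U_n,\psi\varepsilon,F_\kappa)^{ord}\bigr)[f_\kappa]$, which is $2$-dimensional by case (b) of Proposition \ref{pointwise dim} (here $f_\kappa$ is twist-minimal at $\ell$ because twist-minimality is a condition away from $p$ and is therefore constant along the Hida family). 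Since $\widetilde{\mathbb{W}}_{\kappa}$ is free, generic rank and special-fibre dimension coincide, both equal $2$, and freeness of rank $2$ follows.

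The case $r=1$ with $\psi_\ell$ trivial is where the second summand is genuinely needed, and I would treat it by a dichotomy on $f$. If $f$ is twist-minimal at $\ell$, then $\widetilde{\mathbb{W}}_{\kappa}$ already has rank $2$ by the argument of the previous paragraph, while $f_\kappa$ cannot lie in the $\left(\frac{-}{\ell}\right)$-twisted space (such twists fail to be twist-minimal), so the extra summand $\bigl(\mathbb{W}^{\ell,\left(\frac{-}{\ell}\right)}\otimes_{\Lambda}\mathcal{R}_{\mathcal{P}_\kappa}\bigr)[f_\kappa]$ vanishes. If instead $f$ comes from the twisted space, then case (a) of Proposition \ref{pointwise dim} shows that the quaternionic eigenspace drops to dimension $1$, so $\widetilde{\mathbb{W}}_{\kappa}$ is free of rank $1$, whereas the extra summand is free of rank $1$ by the classical $\Lambda$-adic theory for $\Gamma_1(Np^n\ell)$ recalled before the statement (Proposition 2.17 and Theorem 2.18 of \cite{LongoVigni2012}). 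In either subcase the two ranks add to $2$, and the direct sum, being a sum of free $\mathcal{R}_{\mathcal{P}_\kappa}$-modules, is free of rank $2$.

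The step I expect to be the main obstacle is the lower bound on the rank, that is, the faithful interpolation of the rank-$2$ phenomenon of Proposition \ref{square iso hecke-mf}: one must know that the multiplicity of $f_\infty$ in the $\widetilde{\Lambda}$-adic space $\mathbb{W}$ matches the classical multiplicity $2$ of $f_\kappa$ at \emph{every} arithmetic $\kappa$, and not merely generically. This is exactly the novel feature of the present setting, where the Hecke-eigenspaces are $2$-dimensional precisely because duality fails for special orders of even level at $\ell$ (Remark \ref{no-duality and Hecke ext}), and it is the reason the self-dual auxiliary module $T_k(n,r,\psi)$ of Proposition \ref{square iso hecke-mf} is introduced: it restores a clean rank-$2$ Hecke-module structure that survives passage to the limit. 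The remaining bookkeeping — compatibility of the isotypic projection with the $\Lambda$ versus $\widetilde{\Lambda}$ scalar extension and with the ordinary projector — is routine and follows the template of \cite{LongoVigni2012}.
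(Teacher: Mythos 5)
Your overall architecture is sound and you have correctly located the crux, but you have not actually crossed it. The entire proof turns on the claim that ``passing to the ordinary inverse limit, [Proposition~\ref{square iso hecke-mf}] becomes the statement that the relevant part of $\mathbb{W}$ is, after localization at the branch, two copies of $\mathcal{R}$.'' Proposition~\ref{square iso hecke-mf} is a finite-level statement with $\mathbb{C}$-coefficients; nothing in your write-up explains why it survives the passage to $\varprojlim$, and you yourself flag this as ``the main obstacle'' without resolving it. The paper's proof supplies exactly the missing mechanism: it introduces the $p$-divisible group $\mathbb{V}=\varinjlim S_2^{ord}(U_n,\widetilde{\psi},F/\mathcal{O})$, identifies $\mathbb{W}$ with its Pontryagin dual $\widehat{\mathbb{V}}$, and invokes Hida's vertical control theorem (Theorem~9.4 of \cite{Hida1988b}, extended to special orders as in Remark~\ref{remark on hida}) to get $\mathbb{W}[f_\kappa]\cong S_k^{new}(U_n,\widetilde{\psi\varepsilon},\mathcal{O})[f_\kappa]$ at \emph{every} arithmetic $\kappa$; only then do Propositions~\ref{square iso hecke-mf} and~\ref{pointwise dim} apply to yield $\mathbb{W}[f_\kappa]\cong 2\,\mathbb{W}^{\ell^{2r}}[f_\kappa]$, after which Proposition~2.17 of \cite{LongoVigni2012} gives rank-$1$ freeness of $\bigl(\mathbb{W}^{\ell^{2r}}\otimes_{\Lambda}\mathcal{R}_{\mathcal{P}_\kappa}\bigr)[f_\kappa]$ and hence rank $2$ for $\mathbb{W}_\kappa$. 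Without that control step your ``generic rank equals $2$'' is an assertion, not a deduction, and the consistency check via Proposition~\ref{Prop inj} and Proposition~\ref{pointwise dim} only bounds the special fibre \emph{above} by $2$ --- it cannot supply the lower bound, which is the whole content of the lemma.

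Two smaller points. First, your freeness argument for $\widetilde{\mathbb{W}}_{\kappa}$ assumes the $f^{B}_\kappa$-isotypic component is a direct summand of $\mathbb{W}\otimes_{\Lambda}\mathcal{R}_{\mathcal{P}_\kappa}$; the isotypic component as defined in the paper is merely the largest submodule with the prescribed eigenvalues, and splitting it off requires an idempotent in the localized Hecke algebra, which you do not produce. The paper sidesteps this by deriving freeness directly from the explicit isomorphism with $2\bigl(\mathbb{W}^{\ell^{2r}}\otimes_{\Lambda}\mathcal{R}_{\mathcal{P}_\kappa}\bigr)[f_\kappa]$. Second, your dichotomy in the case $r=1$ with $\psi_\ell$ trivial is a reasonable and slightly more explicit bookkeeping than the paper's (which simply replaces $\mathbb{V}$ by the direct sum $S_2^{new}(U_n,\widetilde{\psi},F/\mathcal{O})^{ord}\oplus S_k^{new}(\Gamma_1(Np^n\ell),\psi,F/\mathcal{O})^{\left(\frac{-}{\ell}\right),ord}$ and runs the same argument), but it inherits the same gap, since each branch of the dichotomy again relies on the uninterpolated rank computation.
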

	\begin{proof}
		We start dealing with the case ${\mathbb{W}_{\kappa}}=\widetilde{\mathbb{W}}_{\kappa}$. We consider the $p$-divisible abelian group (\emph{cf.} Remark \ref{freeness} and Section \ref{Quaternionic modular forms of weight $2$}) $\mathbb{V} :=\varinjlim S_2^{ord}(U_n,\widetilde{\psi},F/\mathcal{O})$, where the inductive limit is taken with respect to the restriction maps induced by the inclusions $U_{n+1}\subset U_n$. The Hecke and diamond operators (at least away from $\ell$) act on $\mathbb{V}$ since, as in the case of Eichler orders, the restriction maps in \cite{Hida1988b} (see Eqs. (2.9a), (2.9b) and (3.5)) are compatible with the Hecke action. Taking the Pontryagin dual of $\mathbb{V}$ we obtain the Hecke-equivariant isomorphism $\widehat{\mathbb{V}}\cong\mathbb{W}$ (\emph{cf.} Eqs. \ref{shapiro} and \ref{inverse limit measures}), which shows it is a free ${\widetilde{\Lambda}}$-module of finite rank. We denote by $F_\kappa$ the field extension of $F$ generated by the values of $\varepsilon$ and by $\mathcal{O}_\kappa$ its ring of integers, which we can assume to be finite flat over $\zp$. Up to a scalar and up to taking the tensor product by $\mathcal{O}_\kappa$, we can suppose $f_\kappa$ to have coefficients in $\mathcal{O}$. We observe that $\widetilde{\mathbb{W}}_\kappa=\mathbb{W}[f_\kappa]\otimes_{\Lambda} \mathcal{R}_{\mathcal{P}_{\kappa}}$, as the action of the Hecke algebra is on the first component and the tensor product is just an extension of scalars. We can hence apply Theorem 9.4 of \cite{Hida1988b} (\emph{cf.} Remark \ref{remark on hida}) to $\mathbb{W}[f_\kappa]$ and obtain the isomorphism of $\mathsf{h}_n^{T,ord}(\mathcal{O})$-modules,
		\begin{equation}\label{iso 1 proof main lemma}
			\mathbb{W}[f_\kappa]\cong \widehat{\mathbb{V}[f_\kappa]}\cong S_k^{new}(U_n,\widetilde{\psi\varepsilon},\mathcal{O})[f_\kappa].
		\end{equation}
		We remark that the last Hecke-equivariant isomorphism in the above Eq. (\ref{iso 1 proof main lemma}) (as well as in Eq. (\ref{iso 2 proof main lemma})), comes from the restriction to $T_k(n,r,\psi\varepsilon)$ of the Pontryagin duality established in Lemma 7.1 of \cite{Hida1986b}; under the hypotheses of Lemma \ref{torsion free Gammas} one has the isomorphism $S_k(U_n,\widetilde{\psi\varepsilon},F/\mathcal{O})\cong S_k(U_n,\widetilde{\psi\varepsilon},\mathcal{O})\otimes F/\mathcal{O}$, as in the proof of Theorem 10.1 in \cite{Hida1988b}, and then Proposition \ref{square iso hecke-mf} recovers the needed Hecke-isomorphism for quaternionic modular forms. Similarly to the above discussion for $\mathbb{W}^{\ell}$, we can follow Section 2 of \cite{LongoVigni2012} and construct the interpolation module $\mathbb{W}^{\ell^{2r}}=\varprojlim_{n} S_2(\Gamma_1(Np^n\ell^{2r}),\psi,\mathcal{O})^{ord}$,  relative to the ordinary subspaces $S_k(\Gamma_1(Np^n\ell^{2r}),\psi\varepsilon, \mathcal{O})^{ord}$. We notice that under the hypothesis of Proposition \ref{torsion free Gammas}, the space $\mathbb{W}^{\ell^{2r}}$ is free of finite rank. In particular, we can reproduce the above chain of isomorphisms and obtain $\mathsf{h}_n^{T,ord}(\mathcal{O})$-isomorphisms
		\begin{equation}\label{iso 2 proof main lemma}
			\mathbb{W}^{\ell^{2r}}[f_\kappa]\cong S_k(\Gamma_1(Np^n\ell^{2r}),\psi\varepsilon)^{ord}[f_\kappa]\cong T_k(n,r,\psi\varepsilon)^{ord}[f_\kappa].
		\end{equation}
		Applying Propositions \ref{square iso hecke-mf} and \ref{pointwise dim}, we deduce the isomorphism of $\mathsf{h}_\infty^{T,ord}(\mathcal{O})$-modules,
		\begin{equation}
			\mathbb{W}[f_\kappa]\cong 2\mathbb{W}^{\ell^{2r}}[f_\kappa].
		\end{equation}
		Tensoring over $\Lambda$ with $\mathcal{R}_{\mathcal{P}_\kappa}$, we obtain the isomorphism of $\mathsf{h}_\infty^{T,ord}(\mathcal{O})\otimes_{\Lambda}\mathcal{R}_{\mathcal{P}_\kappa}$-modules, 
		\begin{equation}
			{\mathbb{W}_\kappa}\cong 2\left(\mathbb{W}^{\ell^{2r}}\otimes_{\Lambda}\mathcal{R}_{\mathcal{P}_\kappa}\right)[f_\kappa].
		\end{equation}
		As in the proof of Theorem 2.18 of \cite{LongoVigni2012}, Proposition 2.17 of \emph{loc.cit.} guarantees that $\left(\mathbb{W}^{\ell^{2r}}\otimes_{\Lambda}\mathcal{R}_{\mathcal{P}_\kappa}\right)[f_\kappa]$ is a free $\mathcal{R}_{\mathcal{P}_\kappa}$-module of rank 1, therefore ${\mathbb{W}_\kappa}$ is a free $\mathcal{R}_{\mathcal{P}_\kappa}$-module of rank 2.
		
		The case of $r=1$ and trivial character at $\ell$ is carried out similarly, once we define the $p$-divisible abelian group
		\begin{equation}
			\mathbb{V} :=\varinjlim \left( S_2^{new}(U_n,\widetilde{\psi},F/\mathcal{O})^{ord}\oplus S_k^{new}(\Gamma_1(Np^n\ell),\psi,F/\mathcal{O})^{\left(\frac{-}{\ell}\right),ord}\right),
		\end{equation}
		whose Pontryagin dual is $\mathbb{W}\oplus \mathbb{W}^{\ell,\left(\frac{-}{\ell}\right)}$.
	\end{proof}
	\begin{remark}\label{remark on hida}
		\begin{enumerate}[label=(\alph*)]
			\item The congruence subgroup we consider, away from $\ell$, is the one denoted by $V(Np^n)$ in \cite{Hida1988b} and one passes from this choice to the one used there by changing all the actions via $\mat{a}{b}{c}{d}\mapsto \mat{d}{-b}{-c}{a}$.
			\item We point out that Theorem 9.4 of \cite{Hida1988b} is stated under more strict hypotheses but, in the case of definite quaternion algebras, such hypotheses can be relaxed; this has been already noticed in \cite{LongoVigni2012} and \cite{Hsieh2021} in order to work with Eichler orders for algebras over $\q$, but Theorem 9.4 of \cite{Hida1988b} holds true also for special orders. This is due to the degree of generality in which the results of Chapter 8 of \cite{Hida1988b} are proved (as well as Lemma \ref{open compact order}), together with the necessity of a controlled behavior only at the interpolation prime $p$. Let us remark that we did not take into account the case of indefinite algebras, but that it seems to require a generalization of the spectral sequences approach contained in Chapter 9 of \cite{Hida1988b}.
		\end{enumerate}
	\end{remark}
	We can finally state the sought for Hida control theorem in the case of special orders of level $\ell^{2r}$.
	\begin{theorem}[Control theorem for special orders]\label{Control theorem for special orders}
		With the above notation, suppose $f$ to be twist-minimal at $\ell$. For any arithmetic homomorphism $\kappa:\mathcal{R}\longrightarrow \overline{\qp}$, the map $\nu_\kappa$ of Proposition \ref{Prop inj} induces an isomorphism of $2$-dimensional $F_\kappa$-vector spaces
		\begin{equation}
			\widetilde{\mathbb{W}}_{\kappa}/\mathcal{P}_\kappa \widetilde{\mathbb{W}}_{\kappa} \overset{\cong}{\longrightarrow} \left(S_k(U_n,\psi\varepsilon, F_\kappa)^{ord}\right)[f_\kappa].
		\end{equation}
		If $r=1$, $f$ has trivial character at $\ell$ and lies in $S_k^{new}(\Gamma_1(Np^n\ell),\psi,F_\kappa)^{\otimes\left(\frac{-}{\ell}\right)}$ (in particular, it is not twist-minimal at $\ell$), then the above isomorphism still holds, but the $F_\kappa$-vector spaces are $1$-dimensional.
	\end{theorem}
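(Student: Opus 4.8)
The plan is to leverage the injectivity already established in Proposition \ref{Prop inj}: since $\nu_\kappa$ is an injective $F_\kappa$-linear map, it suffices to show that its source and target are $F_\kappa$-vector spaces of the same finite dimension, namely $2$ in the twist-minimal case and $1$ in the exceptional $r=1$ case. The whole argument thus reduces to two independent dimension computations, one on each side, after which equality of dimensions promotes the injection to an isomorphism.

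For the source, I would invoke Lemma \ref{Main lemma}. In the generic situation (either $r\geq 2$, or $\psi_\ell$ nontrivial, or $r=1$ with $f$ twist-minimal) one has $\widetilde{\mathbb{W}}_\kappa=\mathbb{W}_\kappa$, which is a free $\mathcal{R}_{\mathcal{P}_\kappa}$-module of rank $2$; since $\mathcal{R}_{\mathcal{P}_\kappa}$ is the localization at $\mathcal{P}_\kappa=\ker\kappa$ and its residue field $\mathcal{R}_{\mathcal{P}_\kappa}/\mathcal{P}_\kappa\mathcal{R}_{\mathcal{P}_\kappa}$ is exactly $F_\kappa$, reduction modulo $\mathcal{P}_\kappa$ yields $\widetilde{\mathbb{W}}_\kappa/\mathcal{P}_\kappa\widetilde{\mathbb{W}}_\kappa\cong F_\kappa^2$. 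In the exceptional case ($r=1$, trivial character at $\ell$, and $f\in S_k^{new}(\Gamma_1(Np^n\ell),\psi,F_\kappa)^{\otimes\left(\frac{-}{\ell}\right)}$) Lemma \ref{Main lemma} still gives that the combined module $\mathbb{W}_\kappa=\widetilde{\mathbb{W}}_\kappa\oplus\left(\mathbb{W}^{\ell,\left(\frac{-}{\ell}\right)}\otimes_\Lambda\mathcal{R}_{\mathcal{P}_\kappa}\right)[f_\kappa]$ is free of rank $2$; as the second summand is free of rank $1$ by the freeness statement recalled just before Lemma \ref{Main lemma}, and a direct summand of a free module over the local ring $\mathcal{R}_{\mathcal{P}_\kappa}$ is itself free, $\widetilde{\mathbb{W}}_\kappa$ must be free of rank $1$, so its reduction is $1$-dimensional.

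For the target, I would first observe that $f_\kappa$ is a $p$-ordinary cuspidal newform and that its local component at $\ell$ — hence both the property of being twist-minimal at $\ell$ and that of lying in the $\left(\frac{-}{\ell}\right)$-twist space — is constant along the Hida family, since $\ell\neq p$ and specialization only alters the behavior at $p$. Because $f_\kappa$ is cuspidal and ordinary, the Eisenstein and old components of $S_k(U_n,\psi\varepsilon,F_\kappa)^{ord}$ contribute nothing to the $f_\kappa$-isotypic part, so $\left(S_k(U_n,\psi\varepsilon,F_\kappa)^{ord}\right)[f_\kappa]$ agrees with the isotypic component inside $\mathscr{S}_k^{new}(U_n,\widetilde{\psi\varepsilon},F_\kappa)$. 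Proposition \ref{pointwise dim} then computes this dimension to be $2$ when $f_\kappa$ is twist-minimal and $1$ in the exceptional case, matching the source in both regimes.

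Combining the two counts, $\nu_\kappa$ is an injection between $F_\kappa$-vector spaces of equal dimension and is therefore an isomorphism. The main obstacle I anticipate is not the concluding linear-algebra step but the two inputs feeding it: on the source side, cleanly isolating the rank of $\widetilde{\mathbb{W}}_\kappa$ from that of the combined module $\mathbb{W}_\kappa$ in the exceptional $r=1$ case, and on the target side, rigorously justifying both that twist-minimality at $\ell$ propagates to every arithmetic specialization $f_\kappa$ and that the isotypic component in the full ordinary space genuinely coincides with the new-cuspidal one to which Proposition \ref{pointwise dim} applies.
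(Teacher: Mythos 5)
Your proposal is correct and follows essentially the same route as the paper: combine the injectivity of Proposition \ref{Prop inj} with the dimension count of Proposition \ref{pointwise dim} on the target and the rank-$2$ freeness from Lemma \ref{Main lemma} on the source, then conclude by equality of dimensions. Your treatment of the exceptional $r=1$ case (splitting off the rank-$1$ summand $\bigl(\mathbb{W}^{\ell,\left(\frac{-}{\ell}\right)}\otimes_{\Lambda}\mathcal{R}_{\mathcal{P}_\kappa}\bigr)[f_\kappa]$ over the local ring $\mathcal{R}_{\mathcal{P}_\kappa}$) is just a more explicit rendering of the paper's appeal to the Jacquet--Langlands correspondence preserving twists.
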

	\begin{proof}
		Suppose $f$ to be twist-minimal. Because of Propositions \ref{Prop inj} and \ref{pointwise dim} we know that
		\begin{equation}
			\dim_{F_\kappa}\left(\widetilde{\mathbb{W}}_{\kappa}/\mathcal{P}_\kappa \widetilde{\mathbb{W}}_{\kappa}\right)\leq 2
		\end{equation}
		and thus it is enough to prove the opposite inequality. Lemma \ref{Main lemma} shows that $\widetilde{\mathbb{W}}_\kappa$ is a free $\mathcal{R}_\kappa$-module of rank $2$. The case of $r=1$, trivial character at $\ell$ and $f\not\in S_k^{new}(\Gamma_1(Np^n\ell),\psi,F_\kappa)^{\otimes\left(\frac{-}{\ell}\right)}$ follows similarly. The remaining case accounts to the fact that the Jacquet--Langlands correspondence preserves twists.
	\end{proof}
	\noindent We can consider the finitely generated $\mathcal{R}$-module
	\begin{equation}
		\mathbb{W}_{\infty}:=\begin{cases}
			\left(\left(\mathbb{W}\oplus \mathbb{W}^{\ell,\left(\frac{-}{\ell}\right)}\right)\otimes_{\Lambda} \mathcal{R}\right)[f_\infty] & \textrm{ if $r=1$ and $\psi_\ell$ is the trivial character,}\\
			\left(\mathbb{W}\otimes_{\Lambda} \mathcal{R}\right)[f_\infty] & \textrm{ otherwise.}
		\end{cases}
	\end{equation}
	Proceeding similarly as in the proof of the above theorem we notice that $\mathbb{W}_\infty\otimes Frac(\Lambda)$ is a 2-dimensional $\mathcal{K}$-vector space, where $\mathcal{K}$ is the finite field extension of $Frac(\Lambda)$ called the \emph{primitive component} associated with the Hida family $f_\infty$ (see Section 3 in \cite{Hida1986} in particular, Theorem 3.5 and also Theorem 2.6a of \cite{GreenbergStevens1993}). As noticed in Section 2.2 of \cite{LongoVigni2012}, we point out that $\mathcal{R}$ is the integral closure of $\Lambda$ in $\mathcal{K}$. We can then formulate Theorem \ref{Control theorem for special orders} highlighting this global $\mathcal{R}$-module.
	\begin{theorem}\label{Control theorem for special orders 2}
		With the above notation, suppose $f$ to be twist-minimal at $\ell$. For any arithmetic homomorphism $\kappa:\mathcal{R}\longrightarrow \overline{\qp}$, the map $\nu_\kappa$ of Proposition \ref{Prop inj} induces an isomorphism of $2$-dimensional $F_\kappa$-vector spaces
		\begin{equation}
			\mathbb{W}_{\infty}\otimes_{\mathcal{R}} \mathcal{R}_{\mathcal{P}_\kappa}/\mathcal{P}_\kappa \mathcal{R}_{\mathcal{P}_\kappa} \overset{\cong}{\longrightarrow} \left(S_k(U_n,\psi\varepsilon, F_\kappa)^{ord}\right)[f_\kappa].
		\end{equation}
		If $r=1$, $f$ has trivial character at $\ell$ and lies in $S_k^{new}(\Gamma_1(Np^n\ell),\psi,F_\kappa)^{\otimes\left(\frac{-}{\ell}\right)}$ (in particular, it is not twist-minimal at $\ell$), then the isomorphism of $1$-dimensional $F_\kappa$-vector spaces holds:
		\begin{equation}
			\left(\left(\mathbb{W}\otimes_{\Lambda} \mathcal{R}\right)[f_\infty]\right)\otimes_{\mathcal{R}} \mathcal{R}_{\mathcal{P}_\kappa}/\mathcal{P}_\kappa \mathcal{R}_{\mathcal{P}_\kappa} \overset{\cong}{\longrightarrow} \left(S_k(U_n,\psi\varepsilon, F_\kappa)^{ord}\right)[f_\kappa].
		\end{equation}
	\end{theorem}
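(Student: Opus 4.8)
The plan is to deduce Theorem~\ref{Control theorem for special orders 2} from the already proven Theorem~\ref{Control theorem for special orders} by identifying the two source modules; the only substantive point is that forming the $f_\infty$-isotypic component commutes with localization at $\mathcal{P}_\kappa$, after which the statement becomes a formal base-change manipulation.

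First I would recall that, by construction, $f^{B}_\kappa$ is the composite of $f_\infty\colon\mathsf{H}^{B,ord}_\infty(F)\to\mathcal{R}$ with the localization map $\mathcal{R}\to\mathcal{R}_{\mathcal{P}_\kappa}$. Set $M:=\mathbb{W}\otimes_\Lambda\mathcal{R}$. Since $\mathbb{W}$ is finitely generated over $\widetilde{\Lambda}$ (Remark~\ref{freeness}) and $\mathcal{R}$ is Noetherian (being module-finite over $\Lambda$, as its integral closure in the finite extension $\mathcal{K}$), the Hecke algebra $\mathsf{H}$ acting on $M$ is finite over $\mathcal{R}$, and the eigensystem $f_\infty$ is cut out by a finitely generated ideal $I\subseteq\mathsf{H}$, so that $M[f_\infty]=\{m\in M : Im=0\}$ is the kernel of a single $\mathcal{R}$-linear map $M\to M^{s}$ obtained from generators of $I$. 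Because $-\otimes_\mathcal{R}\mathcal{R}_{\mathcal{P}_\kappa}$ is exact, it commutes with this kernel, and the localized map cuts out precisely the eigensystem $f^{B}_\kappa$; hence
\[
  \mathbb{W}_\infty\otimes_\mathcal{R}\mathcal{R}_{\mathcal{P}_\kappa}
  =\bigl((\mathbb{W}\otimes_\Lambda\mathcal{R})[f_\infty]\bigr)\otimes_\mathcal{R}\mathcal{R}_{\mathcal{P}_\kappa}
  \cong(\mathbb{W}\otimes_\Lambda\mathcal{R}_{\mathcal{P}_\kappa})[f^{B}_\kappa]
  =\widetilde{\mathbb{W}}_\kappa .
\]
Tensoring once more with the residue field $\mathcal{R}_{\mathcal{P}_\kappa}/\mathcal{P}_\kappa\mathcal{R}_{\mathcal{P}_\kappa}$ then gives
\[
  \mathbb{W}_\infty\otimes_\mathcal{R}\mathcal{R}_{\mathcal{P}_\kappa}/\mathcal{P}_\kappa\mathcal{R}_{\mathcal{P}_\kappa}
  \cong\widetilde{\mathbb{W}}_\kappa/\mathcal{P}_\kappa\widetilde{\mathbb{W}}_\kappa ,
\]
and the map $\nu_\kappa$ of Proposition~\ref{Prop inj}, upgraded to an isomorphism in Theorem~\ref{Control theorem for special orders}, identifies the right-hand side with $\bigl(S_k(U_n,\psi\varepsilon,F_\kappa)^{ord}\bigr)[f_\kappa]$ of dimension two.

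For the exceptional case $r=1$ with trivial character at $\ell$ I would split according to the two branches in the definition of $\mathbb{W}_\infty$. If $f$ is twist-minimal, Proposition~\ref{pointwise dim} shows that $f$ does not occur in $S_k^{new}(\Gamma_1(Np^n\ell),\psi,F_\kappa)^{\otimes\left(\frac{-}{\ell}\right)}$, so the summand $\mathbb{W}^{\ell,\left(\frac{-}{\ell}\right)}$ makes no contribution to the $f_\infty$-isotypic component; thus $\mathbb{W}_\infty$ again localizes to $\widetilde{\mathbb{W}}_\kappa$ and the preceding argument applies verbatim, yielding a two-dimensional space. If instead $f$ lies in the twisted space (and so is not twist-minimal), one works with $(\mathbb{W}\otimes_\Lambda\mathcal{R})[f_\infty]$, whose localization is the one-dimensional $\widetilde{\mathbb{W}}_\kappa$ by Proposition~\ref{pointwise dim}, and the one-dimensional clause of Theorem~\ref{Control theorem for special orders} concludes.

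The step demanding the most care is the commutation of the isotypic-component functor with localization: one must genuinely present $[f_\infty]$ as the kernel of a map between finitely presented $\mathcal{R}$-modules—this is where finiteness of $\mathcal{R}$ over $\Lambda$, and the consequent finiteness of the Hecke algebra acting on $M$, enters—so that exactness of $-\otimes_\mathcal{R}\mathcal{R}_{\mathcal{P}_\kappa}$ produces $(\mathbb{W}\otimes_\Lambda\mathcal{R}_{\mathcal{P}_\kappa})[f^{B}_\kappa]$ on the nose, and not merely a module admitting a map to it. Everything else is a formal chain of base-change identities together with a direct appeal to Theorem~\ref{Control theorem for special orders} and to the dimension formulas of Proposition~\ref{pointwise dim}.
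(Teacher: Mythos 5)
Your proposal is correct and follows essentially the same route as the paper, which presents Theorem \ref{Control theorem for special orders 2} as a direct reformulation of Theorem \ref{Control theorem for special orders} (stating it without a separate proof after observing that $\mathbb{W}_{\infty}\otimes Frac(\Lambda)$ is $2$-dimensional over $\mathcal{K}$). Your explicit justification that taking the $f_\infty$-isotypic component commutes with localization at $\mathcal{P}_\kappa$ — presenting $[f_\infty]$ as a kernel of a map of finitely presented $\mathcal{R}$-modules and using exactness of localization — supplies the detail the paper leaves implicit, and your case analysis for $r=1$ with trivial character at $\ell$ matches the paper's intent.
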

	\begin{corollary}
		Let $f_\infty$ be a primitive Hida family of tame level $N\ell^{2r}$, $r\geq 1$, tame character $\psi$ with its $\ell$-component, $\psi_\ell$, as in Assumption \ref{psi hyp}. Suppose moreover $f_\infty$ to be twist-minimal at $\ell$. Then there exist two $\mathcal{R}$-linearly independent elements $\phi^1_{f_\infty}$ and $\phi^2_{f_\infty}$ in $\left(\mathbb{W}\otimes_{\Lambda} \mathcal{R}\right)[f_\infty]$, which form a basis for $\left(\left(\mathbb{W}\otimes_{\Lambda} \mathcal{R}\right)[f_\infty]\right)\otimes \mathcal{K}$. Moreover, for any arithmetic homomorphism $\kappa$, $\nu_\kappa(\phi^1_{f_\infty})$ and $\nu_\kappa(\phi^2_{f_\infty})$ form a $F_\kappa$-basis for $\left(S_k(U_n,\psi\varepsilon, F_\kappa)^{ord}\right)[f_\kappa]$.
	\end{corollary}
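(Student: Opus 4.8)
The plan is to deduce the statement from Theorem~\ref{Control theorem for special orders 2} by upgrading the \emph{pointwise} freeness obtained in Lemma~\ref{Main lemma} to a \emph{global} freeness of $\mathbb{W}_\infty=\left(\mathbb{W}\otimes_{\Lambda}\mathcal{R}\right)[f_\infty]$ as an $\mathcal{R}$-module. Indeed, once one knows that $\mathbb{W}_\infty$ is free of rank $2$ over $\mathcal{R}$, any $\mathcal{R}$-basis $\phi^1_{f_\infty},\phi^2_{f_\infty}$ is automatically $\mathcal{R}$-linearly independent and, after inverting the nonzero elements of $\mathcal{R}$, yields a $\mathcal{K}$-basis of $\mathbb{W}_\infty\otimes_{\mathcal{R}}\mathcal{K}$; the first two assertions of the corollary then follow immediately. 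So the whole proof reduces to producing a free basis of $\mathbb{W}_\infty$ over $\mathcal{R}$.

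First I would reduce this global freeness to the rank-$1$ situation of \cite{LongoVigni2012}. Twist-minimality of $f_\infty$ at $\ell$ makes available the chain of Hecke-equivariant isomorphisms in the proof of Lemma~\ref{Main lemma}, which (before any localization) identifies the $f_\infty$-isotypic module with twice the corresponding classical one; tensoring over $\Lambda$ with $\mathcal{R}$ this gives
\begin{equation}
	\mathbb{W}_\infty\cong 2\left(\mathbb{W}^{\ell^{2r}}\otimes_{\Lambda}\mathcal{R}\right)[f_\infty].
\end{equation}
It therefore suffices to show that the rank-$1$ module $\left(\mathbb{W}^{\ell^{2r}}\otimes_{\Lambda}\mathcal{R}\right)[f_\infty]$ is free of rank $1$ over $\mathcal{R}$. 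This is exactly what the proof of Theorem~2.18 of \cite{LongoVigni2012} provides: the standing hypotheses that the residual Galois representation attached to $f$ be absolutely irreducible and $p$-distinguished guarantee, via Hida's freeness and Gorenstein theorems, that the ordinary $f_\infty$-component is free of rank $1$ over $\mathcal{R}$ (the integral closure of $\Lambda$ in $\mathcal{K}$, as recalled before Theorem~\ref{Control theorem for special orders 2}). Hence $\mathbb{W}_\infty$ is free of rank $2$ over $\mathcal{R}$.

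Having fixed such an $\mathcal{R}$-basis $\phi^1_{f_\infty},\phi^2_{f_\infty}$, I would conclude the ``moreover'' part by base change. For any arithmetic homomorphism $\kappa=(k,\varepsilon)$, freeness of $\mathbb{W}_\infty$ is preserved under the base-change maps $\mathcal{R}\to\mathcal{R}_{\mathcal{P}_\kappa}\to\mathcal{R}_{\mathcal{P}_\kappa}/\mathcal{P}_\kappa\mathcal{R}_{\mathcal{P}_\kappa}=F_\kappa$, so the images of $\phi^1_{f_\infty}$ and $\phi^2_{f_\infty}$ form an $F_\kappa$-basis of $\mathbb{W}_\infty\otimes_{\mathcal{R}}\mathcal{R}_{\mathcal{P}_\kappa}/\mathcal{P}_\kappa\mathcal{R}_{\mathcal{P}_\kappa}$. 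By Theorem~\ref{Control theorem for special orders 2} the map $\nu_\kappa$ carries this $2$-dimensional space isomorphically onto $\left(S_k(U_n,\psi\varepsilon,F_\kappa)^{ord}\right)[f_\kappa]$; therefore $\nu_\kappa(\phi^1_{f_\infty})$ and $\nu_\kappa(\phi^2_{f_\infty})$ form an $F_\kappa$-basis of the latter, as claimed. Note that because we start from a genuine $\mathcal{R}$-basis rather than a mere generic basis, no auxiliary density argument over the arithmetic primes is required.

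The step I expect to be the genuine obstacle is precisely the passage from the \emph{local} freeness at each $\mathcal{P}_\kappa$ that Lemma~\ref{Main lemma} literally supplies to the \emph{global} freeness over $\mathcal{R}$ invoked above: a priori $\mathbb{W}_\infty$ is only a finitely generated, torsion-free (in fact reflexive) module of rank $2$ over the normal domain $\mathcal{R}$, and such modules need not be free. Choosing only a $\mathcal{K}$-basis would be insufficient for the final claim, since its specialization could degenerate at isolated arithmetic points where the chosen sections fail to generate the stalk. It is exactly here that one must lean on the rank-$1$ freeness of \cite{LongoVigni2012}, itself resting on Hida's control and Gorenstein results under the residual irreducibility and $p$-distinguishedness assumptions; with global freeness secured, the remaining steps are formal.
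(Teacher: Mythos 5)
The paper gives no separate proof of this corollary: it is presented as an immediate consequence of Theorem \ref{Control theorem for special orders 2}, together with the observation made just before that theorem that $\mathbb{W}_{\infty}\otimes Frac(\Lambda)$ is a $2$-dimensional $\mathcal{K}$-vector space --- one picks a $\mathcal{K}$-basis inside $\left(\mathbb{W}\otimes_{\Lambda}\mathcal{R}\right)[f_\infty]$ and specializes. Your proposal follows the same overall route (Lemma \ref{Main lemma}, the control theorem, and the rank-one freeness from Proposition 2.17 and Theorem 2.18 of \cite{LongoVigni2012}), but you are more careful on precisely the point the paper elides: two elements that are merely a $\mathcal{K}$-basis need not remain a basis after reduction modulo a given $\mathcal{P}_\kappa$ (in the free rank-$2$ local module $\widetilde{\mathbb{W}}_{\kappa}$ they could be of the form $\varpi e_1,\ e_2$ with $\varpi\in\mathcal{P}_\kappa$), so the ``for any arithmetic $\kappa$'' clause genuinely requires that the chosen elements generate every localization at an arithmetic prime, and global $\mathcal{R}$-freeness is the cleanest way to secure this. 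Your diagnosis of this as the real content of the corollary is correct, and your argument is, if completed, sharper than what the paper records. Two caveats on the execution. First, the identification $\mathbb{W}_\infty\cong 2\left(\mathbb{W}^{\ell^{2r}}\otimes_{\Lambda}\mathcal{R}\right)[f_\infty]$ ``before any localization'' is asserted rather than proved: Lemma \ref{Main lemma} only establishes the corresponding isomorphism after tensoring with $\mathcal{R}_{\mathcal{P}_\kappa}$, and globalizing it requires checking that the decomposition of Proposition \ref{square iso hecke-mf} is compatible with the transition maps of the inverse limits defining $\mathbb{W}$ and $\mathbb{W}^{\ell^{2r}}$, and that the twisted summands of Theorem \ref{Thm 7.16-7} have trivial $f_\infty$-isotypic part integrally, not merely at a dense set of specializations. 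Second, Proposition 2.17 of \cite{LongoVigni2012}, as invoked in the paper, yields freeness of $\left(\mathbb{W}^{\ell^{2r}}\otimes_{\Lambda}\mathcal{R}_{\mathcal{P}_\kappa}\right)[f_\kappa]$ over the local ring $\mathcal{R}_{\mathcal{P}_\kappa}$; upgrading this to freeness over $\mathcal{R}$ itself is a stronger Gorenstein-type statement (standard under the residual irreducibility and $p$-distinguishedness hypotheses already in force), and it should be cited in that form rather than in its localized version. With those two points supplied, your proof is correct.
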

	\begin{definition}
		We denote by $\mathcal{W}_{f_\infty}$ the $\mathcal{R}$-linear span of $\phi^1_{f_\infty}$ and $\phi^2_{f_\infty}$ and call it the \emph{subspace of special quaternionic Hida families associated with $f_\infty$}.
	\end{definition}
	
	
\subsection{A small remark on related works and open questions}\label{A small remark on related works and open questions}
	
	The mathematical literature about this situation of higher ramification at the primes at which the quaternion algebra ramifies is quite meager. Excluding the (singular and collective) works of Pizer, Hijikata and Shemanske, there are few other works considering special orders and they all share working with indefinite algebras. We already referred to \cite{LRdvP2018}, but we wish to point the reader's attention also to the two works \cite{Ciavarella2009} and \cite{deVeraPiquero2013}. In particular, in \cite{Pizer80p2}, Pizer defines certain local operators acting on the quaternionic modular forms. The present note leaves unanswered whether the two linearly independent quaternionic modular forms, and then the two Hida families, can be distinguished via some of these local operators. We wish to address carefully this question in the near future.
	
	{\setstretch{1.0}
		\bibliography{Bibliography}
		\bibliographystyle{plain}}
	{   \hypersetup{hidelinks}	
		\Addresses}
	
\end{document}